\renewcommand{\tilde}[1]{\widetilde{#1}}
\newcommand{\wep}{Weil-Petersson }
\newcommand{\DD}{\mathbb{D}}
\renewcommand{\leq}{\leqslant}
\renewcommand{\geq}{\geqslant}
\DeclareMathOperator{\Prob}{Prob}
\DeclareMathOperator{\Area}{Area}
\DeclareMathOperator{\dist}{dist}
\DeclareMathOperator{\inj}{inj}
\DeclareMathOperator{\Aut}{Aut}
\DeclareMathOperator{\Teich}{Teich}
\newcommand{\Mod}{\mbox{\rm Mod}}
\DeclareMathOperator{\HK}{HolK}
\DeclareMathOperator{\HBD}{HBD}
\DeclareMathOperator{\Vol}{Vol}
\newcommand{\param}{{\mathchoice{\mkern1mu\mbox{\raise2.2pt\hbox{$\centerdot$}}\mkern1mu}{\mkern1mu\mbox{\raise2.2pt\hbox{$\centerdot$}}\mkern1mu}{\mkern1.5mu\centerdot\mkern1.5mu}{\mkern1.5mu\centerdot\mkern1.5mu}}}
\numberwithin{equation}{section}
\theoremstyle{plain}
\newtheorem{theorem}{Theorem}[section]
\newtheorem{corollary}[theorem]{Corollary}
\newtheorem{lemma}[theorem]{Lemma}
\newtheorem{proposition}[theorem]{Proposition}
\theoremstyle{definition}
\newtheorem{remark}[theorem]{Remark}
\theoremstyle{definition}
\newtheorem*{remarksenv}{Remarks}
\begin{document}



\title[Weil-Petersson Curvature]
 {On the Weil-Petersson curvature of the moduli space of Riemann surfaces of large genus}

\author{Yunhui Wu}

\address{Department of Mathematics\\
       Rice University\\
         Houston, Texas, 77005-1892\\}
\email{yw22@rice.edu}

\begin{abstract}
Let $S_g$ be a closed surface of genus $g$ and $\mathbb{M}_g$ be the moduli space of $S_g$ endowed with the Weil-Petersson metric. In this paper we investigate the Weil-Petersson curvatures of $\mathbb{M}_g$ for large genus $g$. First, we study the asymptotic behavior of the extremal Weil-Petersson holomorphic sectional curvatures at certain thick surfaces in $\mathbb{M}_g$ as $g \to \infty$. Then we prove two curvature properties on the whole space $\mathbb{M}_g$ as $g\to \infty$ in a probabilistic way.
\end{abstract}


\maketitle

\section{Introduction}
Let $S_g$ be a closed surface of genus $g$ with $g>1$, and $\mathbb{M}_g$ be the moduli space of $S_g$. Endowed with the \wep metric, the moduli space $\mathbb{M}_g$ is K\"ahler \cite{Ahlfors61}, incomplete \cite{Chu76, Wolpert75} and geodesically complete \cite{Wolpert87}. One can refer to the book \cite{Wolpertbook} for the recent developments on \wep geometry.

Tromba \cite{Tromba86} and Wolpert \cite{Wolpert86} found a formula for the curvature tensor of the \wep metric, which has been applied to study a variety of curvature properties of $\mathbb{M}_g$ over the past several decades. For examples, the moduli space $\mathbb{M}_g$ has negative sectional curvature \cite{Tromba86, Wolpert86}, strongly negative curvature in the sense of Siu \cite{Schu86}, dual Nakano negative curvature \cite{LSY08} and nonpositive definite Riemannian curvature operator \cite{Wu14}. One can also refer to \cite{BF02, Huang05, Huang07-a, Huang07, LSY04, LSYY13, Teo09, Wolpert08, Wolpertbook, Wolpert11, Wolpert12, WW15} for other aspects of the curvatures of $\mathbb{M}_g$.

The subject of the asymptotic geometry of $\mathbb{M}_g$ as $g$ tends to infinity, has recently become quite active: see for examples Mirzakhani \cite{Mirz07-i, Mirz07-j, Mirz10, Mirz13} for the volume of $\mathbb{M}_g$, Cavendish-Parlier \cite{CP12} for the diameter of $\mathbb{M}_g$ and Bromberg-Brock \cite{BB2014} for the least \wep translation length of pseudo-Anosov mapping classes.  In terms of curvature bounds, by combining the results in Wolpert \cite{Wolpert86} and Teo \cite{Teo09}, we may see that, restricted on the thick part of the moduli space, the scalar curvature is comparable to $-g$ as $g$ goes to infinity. The negative scalar curvature can be viewed as the $\ell^1$-norm of the Riemannian \wep curvature operator. The $\ell^p(1\leq p \leq \infty)$-norm of the \wep curvature operator was studied in \cite{WW15} as $g$ tends to infinity. For other related topics, one can also refer to \cite{FKM13, GPY11, LX09, Penner92, RT13, ST01, Zograf08} for more details.

We focus in this paper on the asymptotic behavior for the \wep sectional curvatures as the genus $g$ tends to infinity. Tromba \cite{Tromba86} and Wolpert \cite{Wolpert86} deduced from their formula that the \wep holomorphic sectional curvature of $\mathbb{M}_g$ is bounded above by the constant $\frac{-1}{2\pi(g-1)}$, which confirmed a conjecture of Royden in \cite{Royden75}. If one carefully checks their proofs, this upper bound $\frac{-1}{2\pi(g-1)}$ can never be obtained: otherwise, there exists a harmonic Beltrami differential on a closed hyperbolic surface whose magnitude along the surface is a positive constant, which is impossible. As far as we know, the explicit optimal upper bound for the \wep holomorphic sectional curvature is not \textsl{known} yet. The aim of this article is to study the \wep curvatures for large genus. Our first result tells that the rate $-\frac{1}{g}$, lying in Tromba-Wolpert's upper bound for \wep holomorphic sectional curvature, is optimal as $g$ tends to infinity. More precisely,

\begin{theorem}\label{mt-1}
Given a constant 
$$\epsilon_0>2\ln(3+2\sqrt{2}).$$ 
Let $X_g \in \mathbb{M}_g$ be a hyperbolic surface satisfying that the injectivity radius 
$$\inj(X_g) \geq \epsilon_0.$$ 
Then, the Weil-Petersson holomorphic sectional curvature $\HK$ at $X_g$ satisfies that
\[\max_{\nu \in \HBD(X_g)}\HK(\nu) \asymp -\frac{1}{g}\]
where $\HBD(X_g)$ is the set of harmonic Beltrami differentials on $X_g$.
\end{theorem}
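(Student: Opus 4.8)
The upper bound is immediate from the Tromba--Wolpert estimate quoted in the introduction, and uses nothing about the injectivity radius: since every $\nu\in\HBD(X_g)$ satisfies $\HK(\nu)\le -\tfrac{1}{2\pi(g-1)}$, we get $\max_{\nu}\HK(\nu)\le -\tfrac{1}{2\pi(g-1)}\le -\tfrac{c_1}{g}$ for a universal constant $c_1>0$. The real content of the theorem is therefore the matching lower bound $\max_\nu\HK(\nu)\ge -\tfrac{c_2}{g}$, i.e.\ the existence of a single harmonic Beltrami differential whose holomorphic sectional curvature is no more negative than $-c_2/g$.

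To produce such a differential I would first reduce to an $L^\infty$-versus-$L^2$ estimate. Writing the Tromba--Wolpert formula as
\[
\HK(\nu)=\frac{-2\int_X D(|\nu|^2)\,|\nu|^2\,dA}{\|\nu\|_2^4},\qquad D=-2(\Delta-2)^{-1},
\]
with $\Delta$ the hyperbolic Laplacian, the operator $D$ is positive and obeys the maximum principle $0\le D(f)\le\max_X f$ for $f\ge 0$ (if $u=D(f)$ then $u-\tfrac12\Delta u=f$, so at an interior maximum of $u$ one has $u\le f$). Applying this to $f=|\nu|^2$ gives
\[
\HK(\nu)\ \ge\ -\frac{2\,\|\nu\|_\infty^2}{\|\nu\|_2^2}.
\]
Hence it suffices to construct $\nu\in\HBD(X_g)$ with $\|\nu\|_2^2\ge c\,g\,\|\nu\|_\infty^2$. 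Since $\|\nu\|_2^2\le\Area(X_g)\,\|\nu\|_\infty^2\asymp g\,\|\nu\|_\infty^2$ always holds, I am really asking for a differential whose pointwise norm stays comparable to its maximum on a definite proportion of $X_g$.

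The construction I would use is a superposition of model ``bumps'' over a net. Fix a maximal $\epsilon_0$-separated set $\{p_1,\dots,p_N\}\subset X_g$; a packing/covering count against $\Area(X_g)=4\pi(g-1)$ gives $N\asymp g$. On the disk $\DD$ the model harmonic Beltrami differential $\mu_0=\overline{\phi_0}/\rho$ with $\phi_0\equiv 1$ has pointwise norm $\tfrac14\operatorname{sech}^4(d/2)\asymp e^{-2d}$ in the hyperbolic distance $d$ to the center, so it is an $L^2$ bump with exponentially decaying tail. Taking the Poincar\'e series of $\phi_0$ centered at each $p_i$ produces holomorphic quadratic differentials $\phi_i$ on $X_g$, and I set $\phi=\sum_{i=1}^N\phi_i$, $\nu=\overline{\phi}/\rho$. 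Because $\inj(X_g)\ge\epsilon_0$, the preimages of the net in $\DD$ form an $\epsilon_0$-separated, $\Gamma$-invariant family, and the hypothesis $\epsilon_0>2\ln(3+2\sqrt2)$ is exactly the threshold forcing the sum of all tails at any point to be smaller than a fixed fraction of the single dominant bump there — the tail being an explicit geometric series whose ratio is governed by the competition between the $e^{-2d}$ decay and the $e^{d}$ volume growth of $\DD$. This simultaneously yields (i) $\|\nu\|_\infty\asymp 1$ and (ii) $|\nu|\gtrsim 1$ on each ball $B(p_i,r)$, so that $\|\nu\|_2^2\ge\sum_i\int_{B(p_i,r)}|\nu|^2\,dA\gtrsim N\asymp g$.

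Combining, this $\nu$ satisfies $\|\nu\|_\infty^2/\|\nu\|_2^2\lesssim 1/g$, whence $\HK(\nu)\ge -c_2/g$ and thus $\max_\nu\HK(\nu)\ge -c_2/g$; together with the first paragraph this gives $\max_\nu\HK(\nu)\asymp -1/g$. The main obstacle is the third step: the bumps are moduli of holomorphic objects, so in the $L^2$ lower bound one must rule out destructive interference near each $p_i$, and in the $L^\infty$ bound one must dominate the infinitely many overlapping tails. Both are controlled by the same tail estimate, and it is precisely the bound $\epsilon_0>2\ln(3+2\sqrt2)$ that makes that estimate go through. The reduction in the second paragraph and the upper bound are routine by comparison.
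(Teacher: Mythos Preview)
Your proposal is correct and follows essentially the same route as the paper: the Tromba--Wolpert bound for the upper estimate, the reduction $\HK(\nu)\ge -2\|\nu\|_\infty^2/\|\nu\|_2^2$ via the maximum principle for $D$, and the construction of $\nu$ as the sum over an $\epsilon_0$-net of Poincar\'e series of the constant quadratic differential, together with a packing count giving $N\asymp g$. The one point where the paper is sharper than your sketch is the tail estimate: rather than a shell-counting geometric series, the paper uses Ahlfors' observation that $\sum(1-|\sigma^{-1}\gamma z|^2)^2$ is subharmonic outside the preimages of $B_{eu}(0,1/\sqrt{2})=B(0,\ln(3+2\sqrt2))$, which reduces the $\ell^\infty$ bound to those balls and then to a disjoint-area estimate in $\DD$; it is this mechanism (needing each $B(\tilde p_i,\ln(3+2\sqrt2))$ to sit inside the disjoint family $\{B(\tilde p_j,\epsilon_0/2)\}$) that produces the specific threshold $\epsilon_0>2\ln(3+2\sqrt2)$.
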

\

Buser and Sarnak proved in \cite{BS94} that there exists a universal constant $C>0$ such that for all genus $g\geq 2$ there exists a hyperbolic surface $Y_g \in \mathbb{M}_g$ such that the injectivity radius $\inj(Y_g)$ of $Y_g$ satisfies that
$$\inj(Y_g) \geq C  \ln{g}.$$

The following corollary is an immediate consequence of Theorem \ref{mt-1}, Buser-Sarnak's above result and Tromba-Wolpert's upper bound for \wep holomorphic sectional curvature.
\begin{corollary}\label{coro-1}
The supreme Weil-Petersson holomorphic sectional curvature of the moduli space $\mathbb{M}_g$ satisfies that
$$\sup_{X_g \in \mathbb{M}_g}\max_{\nu \in \HBD(X_g)}\HK(\nu) \asymp -\frac{1}{g}.$$
\end{corollary}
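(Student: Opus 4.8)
The plan is to combine the two-sided estimate of Theorem \ref{mt-1} at a single well-chosen thick surface with the global upper bound of Tromba-Wolpert. Write $F(g):=\sup_{X_g \in \M_g}\max_{\nu \in \HBD(X_g)}\HK(\nu)$; since every \wep holomorphic sectional curvature is negative, $F(g)$ is a negative quantity, and the goal is to produce universal constants $0<c_1\leq c_2$ with $-\frac{c_2}{g}\leq F(g)\leq -\frac{c_1}{g}$ for all large $g$.

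For the upper bound I would invoke Tromba-Wolpert directly. Their bound $\HK(\nu)\leq \frac{-1}{2\pi(g-1)}$ holds for \emph{every} $\nu\in\HBD(X_g)$ and every $X_g\in\M_g$, so taking the maximum over $\nu$ and then the supremum over $X_g$ preserves the inequality and yields $F(g)\leq \frac{-1}{2\pi(g-1)}\leq \frac{-1}{2\pi g}$. This is already an upper bound of the desired rate, with $c_1=\frac{1}{2\pi}$.

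For the lower bound I would use Buser-Sarnak to supply a concrete thick surface. Their result gives a universal $C>0$ and, for each $g$, a surface $Y_g\in\M_g$ with $\inj(Y_g)\geq C\ln g$. Since $\ln g \to \infty$, for all $g$ large enough that $C\ln g \geq \epsilon_0$ the surface $Y_g$ satisfies the injectivity-radius hypothesis of Theorem \ref{mt-1}. Applying that theorem at $Y_g$ furnishes a universal constant $c_2>0$ with $\max_{\nu \in \HBD(Y_g)}\HK(\nu)\geq -\frac{c_2}{g}$. Because $Y_g$ is a particular point of $\M_g$, the supremum defining $F(g)$ dominates this value, so $F(g)\geq -\frac{c_2}{g}$.

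Together these two estimates give $-\frac{c_2}{g}\leq F(g)\leq -\frac{1}{2\pi g}$ for all sufficiently large $g$, which is exactly the asserted comparability $F(g)\asymp -\frac{1}{g}$. There is no genuine obstacle beyond bookkeeping: the one point worth checking is that the comparison constants produced by Theorem \ref{mt-1} are truly universal, i.e.\ independent of $g$ and of which thick surface is used, so that the single inequality $\max_\nu \HK \geq -\frac{c_2}{g}$ at $Y_g$ transfers to a uniform lower bound on the supremum. Since the corollary is an asymptotic statement, the finitely many small genera for which $C\ln g < \epsilon_0$ can be absorbed into the constants or simply disregarded.
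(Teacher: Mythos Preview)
Your argument is correct and follows precisely the route the paper indicates: Tromba--Wolpert's global bound gives the upper estimate, while Buser--Sarnak furnishes a sufficiently thick surface to which Theorem~\ref{mt-1} applies, yielding the lower estimate. The only cosmetic point is that you should fix a definite $\epsilon_0>2\ln(3+2\sqrt{2})$ at the outset so that the constants coming from Theorem~\ref{mt-1} are genuinely universal, but you already flag this.
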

\

Theorem 1.8  in \cite{WW15} says that the minimal Weil-Petersson holomorphic sectional curvature of a sufficiently thick hyperbolic surface (sufficiently thick means large injectivity radius) is comparable to $-1$, which answered a question of M. Mirzakhani. Combine Theorem \ref{mt-1} with a refinement of the argument for the proof of Theorem 1.8  in \cite{WW15}, we get
\begin{theorem}\label{mt-2}
Given a constant 
$$\epsilon_0>2\ln(3+2\sqrt{2}).$$ 
Let $X_g \in \mathbb{M}_g$ be a hyperbolic surface satisfying that the injectivity radius 
$$\inj(X_g) \geq \epsilon_0.$$  
Then, the ratio of the minimal Weil-Petersson holomorphic sectional curvature over the maximal Weil-Petersson holomorphic sectional curvature at $X_g$ satisfies that
$$\frac{\min_{\nu \in \HBD(X_g)}\HK(\nu)}{\max_{\nu \in \HBD(X_g)}\HK(\nu)}\asymp g.$$
\end{theorem}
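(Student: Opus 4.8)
The plan is to obtain matching two-sided bounds for the two extremal curvatures and then divide. For the numerator, Theorem~\ref{mt-1} already gives, under the standing hypothesis $\inj(X_g)\geq\epsilon_0$,
\[\max_{\nu\in\HBD(X_g)}\HK(\nu)\asymp-\frac{1}{g}.\]
Hence the whole problem reduces to showing that, under the \emph{same} hypothesis, the most negative holomorphic sectional curvature stays comparable to a constant,
\[\min_{\nu\in\HBD(X_g)}\HK(\nu)\asymp-1,\]
because both extremal curvatures are negative, and dividing a quantity $\asymp-1$ by a quantity $\asymp-1/g$ produces a quotient pinched between two positive multiples of $g$. This last comparison is Theorem~1.8 of \cite{WW15}, so what is really needed is a refinement of its proof that works all the way down to the injectivity-radius threshold $\epsilon_0>2\ln(3+2\sqrt{2})$ appearing in Theorem~\ref{mt-1} (the comparability constants are allowed to depend on $\epsilon_0$ but not on $g$).

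For the easy half, $\min_\nu\HK(\nu)\geq -C$ with $C=C(\epsilon_0)$, I would start from the Tromba--Wolpert expression
\[\HK(\mu)=\frac{-2\int_{X_g}D(|\mu|^2)\,|\mu|^2\,dA}{\|\mu\|^4},\qquad D=-2(\Delta-2)^{-1},\]
where $\Delta$ is the (nonpositive) hyperbolic Laplacian, so that $D$ is a positive operator with $L^2$-operator norm at most $1$. Cauchy--Schwarz then gives $\int D(|\mu|^2)|\mu|^2\leq\int|\mu|^4$. For a harmonic Beltrami differential the density $|\mu|^2$ satisfies a sub-mean-value inequality on embedded hyperbolic balls, so the hypothesis $\inj(X_g)\geq\epsilon_0$ converts this into a uniform sup bound $\|\mu\|_\infty^2\leq C(\epsilon_0)\,\|\mu\|^2$. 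Inserting this yields $|\HK(\mu)|\leq 2\|\mu\|_\infty^2/\|\mu\|^2\leq 2C(\epsilon_0)$, which is the desired lower bound for the minimal curvature; alternatively one may invoke the boundedness of the curvature on the thick part due to Teo \cite{Teo09}.

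The substantial half is the reverse inequality $\min_\nu\HK(\nu)\leq -c$ for a universal $c>0$, i.e.\ the existence of a direction of definitely negative curvature. Here I would test $\HK$ against a differential $\mu$ whose density $|\mu|^2$ is concentrated in an embedded hyperbolic ball $B\subset X_g$ of radius on the order of $\epsilon_0$ — available precisely because $\inj(X_g)\geq\epsilon_0$ — modeled on a fixed harmonic Beltrami differential on the hyperbolic plane. Since $D$ reproduces constants ($D(1)=1$) and its Green's function decays exponentially in distance, on such a ball one expects a lower comparison $D(|\mu|^2)\geq c_0\,|\mu|^2$, whence $\int D(|\mu|^2)|\mu|^2\gtrsim\int|\mu|^4\gtrsim\|\mu\|^4/\Area(B)\gtrsim c\,\|\mu\|^4$, giving $\HK(\mu)\leq -c$. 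The delicate point, and the main obstacle, is controlling the nonlocal operator $D$: one must bound the contribution to $D(|\mu|^2)$ coming from outside $B$, equivalently the tail of the Green's function of $\Delta-2$, and the threshold $\epsilon_0>2\ln(3+2\sqrt{2})$ is exactly what guarantees that the embedded ball is large enough for this tail to be dominated and for the comparison $D(|\mu|^2)\geq c_0|\mu|^2$ to survive with a definite constant. This sharpening of the ball-size bookkeeping in \cite{WW15} is the only real work; once $\min_\nu\HK(\nu)\asymp-1$ is established, combining it with the estimate for the maximum from Theorem~\ref{mt-1} forces the ratio to be $\asymp g$ and completes the proof.
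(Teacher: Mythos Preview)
Your overall strategy --- reduce to $\min_\nu\HK(\nu)\asymp-1$ and combine with Theorem~\ref{mt-1} --- is exactly the paper's, and your argument for the bound $\min_\nu\HK(\nu)\geq-C(\epsilon_0)$ via sup-norm control of harmonic Beltrami differentials on the thick part is essentially what the paper cites (Huang \cite{Huang07}, Theorem~1.2 of \cite{WW15}, or Teo \cite{Teo09}).

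The genuine difference is in the hard direction $\min_\nu\HK(\nu)\leq-c$. The paper does \emph{not} analyze the Green's function of $\Delta-2$ or attempt a pointwise comparison $D(|\mu|^2)\geq c_0|\mu|^2$. Instead it invokes the already-packaged inequality of Proposition~\ref{ratio},
\[
\HK(\mu)\leq -C_2(\inj(p))\,\frac{|\mu(p)|^4}{\|\mu\|_{WP}^4},
\]
and tests it against the explicit Poincar\'e series $\mu(z)=\rho(z)^{-1}\sum_{\gamma\in\Gamma_g}\gamma'(z)^2$, which is precisely the $n=1$ case of the construction behind Theorem~\ref{key-lemma}. All that remains is to verify $|\mu(p)|\asymp 1$ and $\|\mu\|_{WP}\asymp 1$; these are the $n=1$ instances of Theorem~\ref{ubfmu} and Proposition~\ref{l2-norm}, proved by elementary subharmonicity/mean-value arguments and a disjoint-balls area count in the disk. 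No spectral information about $D$ is needed, which makes the paper's route both shorter and more concrete than the Green's-function estimate you outline. One small correction: for this half of the argument the paper only needs a single point with $\inj(p)>\ln(3+2\sqrt{2})$ (Theorem~\ref{upper-bound-WW}); the factor of $2$ in the hypothesis $\epsilon_0>2\ln(3+2\sqrt{2})$ is used only in Theorem~\ref{mt-1}, so your reading of this constant as a Green's-function-tail threshold is not how it actually arises.
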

\

There are recent suggestions that as the genus $g$ grows large, some regions in the moduli space $\mathbb{M}_g$ should become increasingly flat. It was shown in \cite{WW15} that this is not true from the view point of Riemannian curvature operator. Actually we showed in \cite{WW15} that the $\ell^{\infty}$-norm of the Riemannian \wep curvature operator at every point in $\mathbb{M}_g$ is uniformly bounded below away from zero. It is not \textsl{known} whether this phenomenon still holds for the $\ell^{\infty}$-norm of the Riemannian \wep sectional curvature. 

Let $X_g\in \mathbb{M}_g$ and $T_{X_g}\mathbb{M}_g$ be the tangent space of $\mathbb{M}_g$ at $X_g$. For sure $T_{X_g}\mathbb{M}_g$ is identified with $\HBD(X_g)$ which is the set of harmonic Beltrami differentials on $X_g$. Since the rest part of the introduction is on real Riemannian sectional curvatures, with abuse of notation we use $T_{X_g}\mathbb{M}_g$ instead of $\HBD(X_g)$. The following result\footnote{The author is grateful to Hugo Parlier for bringing to my attention the \wep curvatures on random surfaces.} tells that, from the view point of Riemannian sectional curvature we also have that no region in the moduli space $\mathbb{M}_g$ becomes increasingly flat as $g$ tends to infinity. The proof of Theorem \ref{mt-3} requires a result due to M. Mirzakhani in \cite{Mirz13}, which says that a random Riemann surface will contain an arbitrarily large embedded hyperbolic geodesic ball as $g$ tends to infinity. For any two dimensional plane $P\subset T_{X_g}\mathbb{M}_g$ (maybe not holomorphic), we denote by $K(P)$ the Riemannian \wep sectional curvature of the plane $P$. 
\begin{theorem}\label{mt-3}
There exists a universal constant $C_0>0$ such that the probability satisfies that
$$\lim_{g\to \infty}  \Prob\{X_g\in\mathbb{M}_g; \min_{P\subset T_{X_g}\mathbb{M}_g}K(P)\leq -C_0<0\}=1.$$
\end{theorem}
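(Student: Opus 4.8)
The plan is to reduce Theorem \ref{mt-3} to the construction, on any surface carrying a sufficiently large embedded ball, of a \emph{single} harmonic Beltrami differential whose holomorphic sectional curvature is bounded away from $0$, and then to feed in Mirzakhani's result. The reduction is immediate: for $0\neq\nu\in\HBD(X_g)$ the real $2$-plane $P_\nu=\Span_{\R}\{\nu,J\nu\}$ (with $J$ the complex structure of $\M_g$) is, by definition of holomorphic sectional curvature, a plane with $K(P_\nu)=\HK(\nu)$, so
\[\min_{P\subset T_{X_g}\M_g}K(P)\leq K(P_\nu)=\HK(\nu).\]
Thus it suffices to produce a universal $C_0>0$ such that whenever $X_g$ contains an embedded geodesic ball $B(p,R)$ of a fixed large radius $R$, there is $\nu\in\HBD(X_g)$ with $\HK(\nu)\leq -C_0$. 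Since by \cite{Mirz13} the event that $X_g$ contains such a ball has probability tending to $1$ as $g\to\infty$, the theorem follows.

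First I would build a harmonic Beltrami differential concentrated in $B(p,R)$. Identify $B(p,R)$ isometrically with the concentric radius-$R$ ball in the disc model of $\Hy$, with $p$ at $0$, and set $s_0=\eta\,dz^2$ where $\eta=1$ on $B(p,R/2)$ and $\operatorname{supp}\eta\subset B(p,R)$; transplanted to $X_g$ this is a smooth quadratic differential whose hyperbolic norm $|s_0|_{hyp}=\eta\,|dz^2|_{hyp}$ is exponentially concentrated at $p$. To make it holomorphic I would solve $\bar\partial u=\bar\partial s_0$. Because the hyperbolic metric has Gauss curvature $-1$, the bundle $K^2$ of quadratic differentials is uniformly positively curved, so Hörmander's $L^2$ estimate yields a solution with $\|u\|_{L^2}\leq C\,\|\bar\partial s_0\|_{L^2}$, the constant $C$ depending only on this uniform curvature bound and \emph{not} on the global geometry of $X_g$. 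As $\bar\partial s_0$ is supported in the annulus $B(p,R)\setminus B(p,R/2)$, where $|dz^2|_{hyp}$ is exponentially small, we get $\|u\|_{L^2}\leq Ce^{-cR}$. Then $\phi:=s_0-u$ is a holomorphic quadratic differential with $\|\phi-s_0\|_{L^2}\leq Ce^{-cR}$, and $\nu:=\bar\phi/\rho$ is a genuine harmonic Beltrami differential with $|\nu|^2=|\phi|^2_{hyp}$. Fixing first a radius $r$ for which the model $dz^2$ carries at least, say, $90\%$ of its hyperbolic $L^2$-mass inside $B(0,r)\subset\Hy$, and then taking $R\gg r$, the smallness of $u$ forces $\int_{B(p,r)}|\nu|^2\,dA\geq\tfrac12\int_{X_g}|\nu|^2\,dA$, so $\nu$ is concentrated in $B(p,r)$.

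Finally I would estimate $\HK(\nu)$ via the Tromba--Wolpert formula, which up to a universal positive constant and the usual sign convention reads $\HK(\nu)=-\int_{X_g}D(|\nu|^2)\,|\nu|^2\,dA\,/\,\|\nu\|^4$, with $D=-2(\Delta-2)^{-1}=2(2-\Delta)^{-1}$. Writing $f=|\nu|^2$ and using $(2-\Delta)^{-1}=\int_0^\infty e^{-2t}e^{t\Delta}\,dt$, the kernel $G$ of $(2-\Delta)^{-1}$ is strictly positive; by domain monotonicity of this Green's function under the inclusion $B(p,R)\into X_g$ together with the convergence $G_{B(p,R)}\to G_{\Hy}$ as $R\to\infty$, for $R$ large and $x,y\in B(p,r)$ one has $G(x,y)\geq c_0(r)>0$ with $c_0(r)$ universal. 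Hence, using the concentration of $\nu$,
\[\int_{X_g}D(f)\,f\,dA=2\!\iint_{X_g\times X_g}\! G(x,y)\,f(x)f(y)\,dA\,dA\;\geq\;2\,c_0(r)\Big(\int_{B(p,r)}f\,dA\Big)^2\;\geq\;\tfrac{c_0(r)}{2}\Big(\int_{X_g}f\,dA\Big)^2,\]
so that $\HK(\nu)\leq -C_0$ for a constant $C_0$ independent of $g$. The hard part will be the construction step: since a random $X_g$ need not be thick away from $B(p,R)$, no global elliptic estimate is available, and the only way to control the holomorphic correction $u$ is through the uniform positivity of $K^2$ (equivalently, through a harmonic projection whose error is governed by the same uniform lower bound), so that the thin parts of $X_g$ never enter the estimates. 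Once $\nu$ is built and concentrated, the Green's-function lower bound is soft and uniform in $g$, which is precisely the refinement of the argument for Theorem 1.8 in \cite{WW15} that is needed.
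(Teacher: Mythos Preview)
Your strategy is correct and matches the paper's high-level plan: reduce to showing that any surface containing a sufficiently large embedded ball carries a harmonic Beltrami differential with holomorphic sectional curvature bounded away from $0$, and then invoke Mirzakhani's Theorem~\ref{mm-1}. The paper's proof is literally one line, citing Theorem~\ref{upper-bound-WW} and Theorem~\ref{mm-1}; so the content is in Theorem~\ref{upper-bound-WW}, and that is where the two arguments diverge.

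For the construction, the paper does not solve a $\bar\partial$-problem. Instead it takes the relative Poincar\'e series $\mu(z)=\rho(z)^{-1}\sum_{\gamma\in\Gamma_g}\gamma'(z)^2$ (the $n=1$ case of \eqref{hbd-a}, obtained as the Bergman projection of the characteristic function of $B(p,r)$), and then uses Ahlfors' subharmonicity trick and elementary packing of translates of the embedded ball to show $|\mu(p)|\asymp|\mu|_{\ell^\infty}\asymp\|\mu\|_{WP}\asymp 1$ (Theorem~\ref{ubfmu}, Proposition~\ref{l2-norm}). The curvature bound then comes from Proposition~\ref{ratio}, i.e.\ a pointwise inequality $\HK(\mu)\leq -C(\inj(p))\,|\mu(p)|^4/\|\mu\|_{WP}^4$, rather than from a Green's function lower bound for $D=2(2-\Delta)^{-1}$. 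Your H\"ormander construction and the Poincar\'e-series construction are two standard ways to produce the same kind of object; your uniform constant comes from the uniform positivity of $K_{X_g}^2$ in the hyperbolic normalization, while the paper's uniform constants come from the fact that the translates $\gamma^{-1}\sigma_{\tilde p_i}B(0,\epsilon_0/2)$ are pairwise disjoint in $\DD$. For the curvature step, your domain-monotonicity argument $G_{X_g}\geq G^{\mathrm{Dir}}_{B(p,R)}\to G_{\Hy}$ is a clean substitute for the paper's Proposition~\ref{ratio}; it buys you an explicit lower bound on $\int D(f)f\,dA$ directly, whereas the paper's route hides that bound in the black-box Lemma~5.1 of \cite{Wolf12}. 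Either way the conclusion is the same, and your identification of the key point---that the thin parts of $X_g$ must never enter the estimates---is exactly right.
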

\

Since $\mathbb{M}_g$ has negative sectional curvature \cite{Wolpert86, Tromba86}, the following function $h$ is well-defined.
\[h(X_g):=\frac{\min_{P \subset T_{X_g}\mathbb{M}_g}K(P)}{\max_{P \subset T_{X_g}\mathbb{M}_g}K(P)}, \quad \forall X_g \in \mathbb{M}_g.\]

The function $h$ above is also well-defined in any Riemannian manifold of negative (or positive) Riemannian sectional curvature. Recall that Zheng-Yau in \cite{Yau-Zheng-91} proved that a compact K\"ahler manifold with weakly $4$-pinched Riemannian sectional curvature (the range of $h$ is in $[1,4)$) has nonpositive definite Riemannian curvature operator if the sectional curvature is negative. It is known that the Weil-Petersson metric of $\mathbb{M}_g$ has negative sectional curvature \cite{Wolpert86, Tromba86} and nonpositive definite Riemannian curvature operator \cite{Wu14}. So it is \textsl{interesting} to study this function $h$ on $\mathbb{M}_g$. 

It is clear that $h(X_g)\geq 1$ for all $X_g \in \mathbb{M}_g$. The results in \cite{Huang05, Wolpert08} tell that $\sup_{X_g  \in \mathbb{M}_g}h(X_g)=\infty$. Indeed, one may choose a separating curve $\alpha \subset S_g$ and consider the direction along which the length $\ell_{\alpha}$ pinches to zero. Then the \wep holomorphic sectional curvature along the pinching direction will blow up as $\ell_{\alpha} \to 0$ (see \cite{Huang07-a, Wolpert08}). On the other hand, since $\alpha$ is separating, there exists arbitrary flat planes (see \cite{Masur76,Huang05}) near the stratum whose nodes have vanishing $\alpha$-lengths. Thus, $h$ is unbounded near certain part of the boundary of $\mathbb{M}_g$. However, it is not clear about the range of $h$ in the thick part of the moduli space. Our next result is that in a probabilistic way $h$ is unbounded globally on $\mathbb{M}_g$ as $g$ tends to infinity. More precisely,
\begin{theorem}\label{mt-4}
For any $L>0$, then the probability satisfies 
$$\lim_{g\to \infty}\Prob\{X_g \in \mathbb{M}_g; h(X_g) \geq L\}=1.$$
\end{theorem}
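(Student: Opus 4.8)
The plan is to read $h$ as a ratio of magnitudes and to control the two factors separately. Since every \wep sectional curvature is negative, one may write
$$h(X_g)=\frac{\min_{P}K(P)}{\max_{P}K(P)}=\frac{|\min_{P}K(P)|}{|\max_{P}K(P)|},$$
so it is enough to prove that, with probability tending to $1$, the numerator is bounded below by a fixed positive constant while the denominator tends to $0$. The lower bound on the numerator is exactly Theorem \ref{mt-3}: with probability $\to 1$ there is a universal $C_0>0$ with $\min_{P}K(P)\leq -C_0$, hence $|\min_{P}K(P)|\geq C_0$. Everything thus reduces to exhibiting, with probability $\to 1$, a single real tangent $2$-plane $P_0$ whose \wep sectional curvature is arbitrarily close to $0$, since then $|\max_{P}K(P)|\leq |K(P_0)|$.

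To produce a nearly flat plane I would use the large-scale geometry of a random surface. By the same result of Mirzakhani that underlies Theorem \ref{mt-3}, with probability $\to 1$ the surface $X_g$ contains an embedded hyperbolic ball of radius $R_g\to\infty$ centered at some $p$; inside it one finds a second point $p_2$ with $d(p,p_2)\geq R_g/2$, so that fixed-radius embedded balls $B_1,B_2$ around $p,p_2$ satisfy $d(B_1,B_2)\to\infty$. On $B_1$ and $B_2$ I place two \wep-unit harmonic Beltrami differentials $\mu_1,\mu_2$ that are, up to exponentially small tails, concentrated in $B_1,B_2$, and consider $P_0=\Span_{\R}(\mu_1,\mu_2)$. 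By the Tromba--Wolpert formula, $K(P_0)$ is a fixed linear combination of integrals $\int_{X_g}D(\mu_i\bar\mu_j)\,\mu_k\bar\mu_l\,dA$ with indices in $\{1,2\}$, where $D=-2(\Delta-2)^{-1}$. Each such integral is small for one of two reasons: those containing a pointwise factor $\mu_1\bar\mu_2$ (or $\mu_2\bar\mu_1$) are negligible because $\mu_1,\mu_2$ have almost disjoint supports, whereas the remaining ``cross'' integrals $\int_{X_g}D(|\mu_1|^2)\,|\mu_2|^2\,dA$ are bounded, via the exponential decay of the Green's function of $D$, by $e^{-c\,d(B_1,B_2)}$. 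Hence $|K(P_0)|\leq\delta(g)\to 0$, and therefore $|\max_{P}K(P)|\leq\delta(g)$.

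Intersecting the two events, each of probability $1-o(1)$, gives for all large $g$
$$h(X_g)=\frac{|\min_{P}K(P)|}{|\max_{P}K(P)|}\geq\frac{C_0}{\delta(g)}\longrightarrow\infty,$$
so that $h(X_g)\geq L$ for every fixed $L$ once $g$ is large; this yields $\Prob\{h(X_g)\geq L\}\to 1$.

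The heart of the matter, and the step I expect to be the main obstacle, is the denominator estimate. Harmonic Beltrami differentials are real-analytic and cannot be compactly supported, so $\mu_1,\mu_2$ can only be \emph{approximately} localized; one must first produce genuine harmonic representatives --- coming, say, from holomorphic quadratic differentials concentrated near $p$ and $p_2$ --- with quantitatively controlled tails, and then verify that both the tail overlaps $\mu_1\bar\mu_2$ and the nonlocal smoothing by $D$ transport at most an exponentially small amount of mass across the gap $d(B_1,B_2)$. Pinning down the decay rate $c$ of the Green's function of $\Delta-2$ and checking that the \wep normalization $\|\mu_i\|=1$ does not spoil these bounds is where the genuine analysis lies; the probabilistic ingredients (two far-apart embedded balls, and $\min_{P}K(P)\leq -C_0$, holding simultaneously with probability $\to 1$) then follow directly from Mirzakhani's theorem and Theorem \ref{mt-3}.
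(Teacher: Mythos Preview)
Your strategy for the numerator is exactly the paper's: both invoke Theorem \ref{mt-3} to get $|\min_P K(P)|\geq C_0$ with probability $\to 1$. The difference lies entirely in the denominator.

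The paper's route is much shorter than yours. Instead of constructing an explicit nearly-flat $2$-plane, it uses Teo's Ricci bound: on the $\epsilon$-thick part $\mathbb{M}_g\setminus\mathbb{M}_g^{\epsilon}$ one has $\mathrm{Ric}(v)\geq -2C_1(2\epsilon)$ for every unit $v$, and since Ricci is a sum of $6g-7$ negative sectional curvatures, pigeonhole gives $\max_P K(P)\geq -\frac{2C_1(2\epsilon)}{6g-7}$ \emph{deterministically} on the thick part. Intersecting with the event from Theorem \ref{mt-3} and using Mirzakhani's estimate $\Vol_{WP}(\mathbb{M}_g^{\epsilon})\asymp \epsilon^2\Vol_{WP}(\mathbb{M}_g)$ (Theorem \ref{mm-2}) to discard the thin part, then letting $\epsilon\to 0$, finishes the proof. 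No Green's function estimates, no localization of Beltrami differentials, no curvature-tensor bookkeeping.

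Your constructive approach is in principle workable: the $\mu_i$ you want are essentially the $n=1$ case of Theorem \ref{key-lemma} placed at two well-separated points inside the Mirzakhani ball, the pointwise tail bound $|\mu_i(z)|\leq \frac{1}{4}\sum_{\gamma}(1-|\gamma(z)|^2)^2$ does give exponential decay away from $p_i$, and the kernel of $D=-2(\Delta-2)^{-1}$ decays exponentially off the diagonal. But turning this into a clean bound on $K(P_0)$---with the correct real sectional curvature formula for a totally real plane, the right normalizations, and tail control over the whole surface rather than just the embedded ball---is genuine work that you correctly flag as the main obstacle. What you gain is an explicit witness to near-flatness and no need for the separate thin-part volume estimate; what the paper gains is a two-line denominator argument at the cost of one extra citation.
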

Contrast with Zheng-Yau's result in \cite{Yau-Zheng-91}, for large enough $g$, almost no point in the moduli space $\mathbb{M}_g$ has weakly $4$-pinched Riemannian sectional curvature although the Riemannian curvature operator of $\mathbb{M}_g$ is nonpositive definite \cite{Wu14}. \\

For the proofs of Theorem \ref{mt-1}, \ref{mt-2}, \ref{mt-3} and \ref{mt-4}, the main idea is to construct harmonic Beltrami differentials on Riemann surfaces with certain nice properties. The following technique result is crucial in the proofs of all the results above. It is also interesting on itself.
\begin{theorem}\label{key-lemma}
Given a positive integer $n\in \mathbb{Z}^{+}$ and a constant 
$$\epsilon_0>2\ln(3+2\sqrt{2}).$$ 
Let $X_g \in \mathbb{M}_g$ be a hyperbolic surface. Assume that there exists a set of finite points $\{p_i\}_{i=1}^n\subset X_g$ satisfying that

(1). $\inj(p_i) \geq \frac{\epsilon_0}{2}, \ \forall 1\leq i \leq n$.

(2). $\dist(p_i,p_j)\geq \epsilon_0, \ \forall 1\leq i \neq j \leq n$. Where $\dist(\cdot, \cdot)$ is the distance function on $X_g$. 

Then, there exists a harmonic Beltrami differential $\mu \in \HBD(X_g)$ such that 
\[ |\mu(p_i)| \asymp |\mu|_{\ell^{\infty}(X_g)}\asymp 1, \quad \forall 1\leq i \leq n.\]
\end{theorem}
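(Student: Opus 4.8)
The plan is to exhibit $\mu$ as $\mu = \overline{\Phi}/\rho$ for a single holomorphic quadratic differential $\Phi$ on $X_g$, where $\rho$ denotes the hyperbolic metric; recall that every element of $\HBD(X_g)$ has exactly this form and that its pointwise magnitude $|\mu| = |\Phi|/\rho$ is a well-defined function on $X_g$. Write $X_g = \DD/\Gamma$ with $\Gamma$ Fuchsian. For each $p_i$ I would fix a lift to the center $0\in\DD$ of a disk chart and take as local model the flat quadratic differential $\phi_0 = dz^2$, whose magnitude $|\phi_0|/\rho = \tfrac14(1-|z|^2)^2$ is a radially decreasing bump of height $\tfrac14$ at the center. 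To turn this into an automorphic object I would form the relative Poincar\'e (theta) series $\Phi_i(z) = \sum_{\gamma\in\Gamma}(\gamma'(z))^2$, which converges since $X_g$ is compact and descends to a holomorphic quadratic differential concentrated near $p_i$. Finally I would set $\Phi = \sum_{i=1}^n \Phi_i$ and $\mu = \overline{\Phi}/\rho$.

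The core estimate is that at its own center each $\Phi_i$ is dominated by the identity term of its series. For $\gamma\in\Aut(\DD)$ one has $|\gamma'(0)| = 1 - |\gamma 0|^2$ and $\dist_\Hy(0,\gamma 0) = \ln\frac{1+|\gamma 0|}{1-|\gamma 0|}$, so $|\gamma'(0)|^2 = (1-|\gamma 0|^2)^2$ decays like $e^{-2\dist_\Hy(0,\gamma 0)}$. The hypothesis $\inj(p_i)\ge\epsilon_0/2$ says precisely that $\dist_\Hy(0,\gamma 0)\ge\epsilon_0$ for every $\gamma\ne\mathrm{id}$ and that the orbit $\{\gamma 0\}$ is $\epsilon_0$-separated, so the hyperbolic disks of radius $\epsilon_0/2$ about the orbit points are disjoint. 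Packing these into balls about $0$ bounds the number of orbit points at distance $\le R$ by a multiple of $\cosh R$, and the resulting geometric sum makes $\sum_{\gamma\ne\mathrm{id}}|\gamma'(0)|^2$ strictly smaller than the main term $1$ exactly when $\epsilon_0 > 2\ln(3+2\sqrt2)$: since $\tanh(\ln(3+2\sqrt2)) = \tfrac{2\sqrt2}{3}$, the threshold forces $1-|\gamma 0|^2 < \tfrac19$ for the nearest orbit points, which drives the tail below the main term. Hence $|\Phi_i(0)| \ge 1 - (\text{tail}) \gtrsim 1$.

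For several points the separation hypothesis enters in the same way: to evaluate $\Phi_j$ at $p_i$ with $j\ne i$ one reads off the decaying bump at a point whose nearest relevant lift sits at distance $\ge\epsilon_0$, so $|\Phi_j|/\rho$ at $p_i$ is $\lesssim (1-\tanh^2(\epsilon_0/2))^2 < \tfrac1{81}$, controlled by the same convergent series. Thus at each $p_i$ the value $|\Phi(0)|$ is trapped between the dominant term of $\Phi_i$ and the uniformly small sum of its own tail together with the cross contributions $\Phi_j$; since the errors are a definite fraction below the main term, no cancellation can occur and $|\mu(p_i)|\asymp 1$ with constants independent of $g$ and $n$. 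Combined with the trivial lower bound $|\mu|_{\ell^{\infty}(X_g)}\ge |\mu(p_1)|\asymp 1$, it then remains to prove the matching upper bound $|\mu|_{\ell^{\infty}(X_g)}\lesssim 1$ by controlling $|\Phi|/\rho \le \tfrac14\sum_i\sum_{\gamma}(1-|\gamma z|^2)^2$ uniformly in $z$.

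The main obstacle is precisely this uniform upper bound over the whole surface, because the hypotheses control the injectivity radius only at the $p_i$, so $X_g$ may carry short geodesics elsewhere, and there the orbit sum $\sum_{\gamma}(1-|\gamma z|^2)^2$ threatens to blow up like $1/\ell$ for a geodesic of length $\ell$. This is exactly where the Collar Lemma must be invoked: a geodesic of length $\ell$ keeps every point of injectivity radius $\ge\epsilon_0/2$ at distance $\gtrsim \ln(1/\ell)$ from its core, so the clustered contribution is in fact $\lesssim \ell$ and vanishes as $\ell\to 0$. Making this trade-off quantitative and genus-independent, while simultaneously calibrating the tail and cross-term estimates to close at the stated threshold $2\ln(3+2\sqrt2)=4\ln(1+\sqrt2)$ rather than merely for sufficiently large $\epsilon_0$, is the delicate heart of the argument.
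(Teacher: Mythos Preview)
Your construction of $\mu$ via Poincar\'e theta series centered at the $p_i$ is exactly the paper's, and your instinct that the identity term dominates at each $p_i$ is correct. But there are two genuine problems.

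\textbf{The upper bound.} You misdiagnose the difficulty. The majorant you write down is $f(z)=\tfrac14\sum_{i}\sum_{\gamma}(1-|\sigma_{\tilde p_i}^{-1}\gamma z|^2)^2$, and each term decays like $e^{-2\dist_{\DD}(z,\gamma^{-1}\tilde p_i)}$. The full collection $\{\gamma^{-1}\tilde p_i\}_{i,\gamma}$ is $\epsilon_0$-separated in $\DD$ by the two hypotheses, so a packing argument already bounds $f(z)$ uniformly in $z$: even if $z$ sits deep in a thin collar, the sources stay $\epsilon_0$-separated and far from $z$, and the sum cannot blow up. The Collar Lemma is not needed, and your claim that the orbit sum ``threatens to blow up like $1/\ell$'' confuses this sum with the unshifted sum $\sum_\gamma(1-|\gamma z|^2)^2$, which is a different object. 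The paper's route is still cleaner than direct packing: a Laplacian computation (Ahlfors' method) shows each $f_i$, hence $f$, is \emph{subharmonic} outside $\bigcup_i B(\tilde p_i;\ln(3+2\sqrt2))$, so by the maximum principle $\sup_\DD f$ is attained on these balls, where you already have injectivity control. One then applies the mean-value inequality to $|(\sigma_{\tilde p_i}^{-1}\gamma)'|^2$ on a small Euclidean disk and observes that the images $\gamma^{-1}\sigma_{\tilde p_i}B(0;\epsilon_0/2)$ are pairwise disjoint in $\DD$, so the sum of their Euclidean areas is at most $\pi$. This gives the uniform bound with no reference to thin parts.

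\textbf{The lower bound and the sharp threshold.} Your annular orbit count (``at most a multiple of $\cosh R$ points within distance $R$'') will produce \emph{some} positive lower bound for sufficiently large $\epsilon_0$, but not at the stated threshold $2\ln(3+2\sqrt2)$; the constants in such counts are too lossy. Moreover, bounding each cross term $|\Phi_j|/\rho$ by $1/81$ is not enough, since there are $n-1$ of them and $n$ is unbounded. The paper fixes both issues with the same area trick: by the mean-value inequality on $B_{eu}(0;1/\sqrt2)=B(0;\ln(3+2\sqrt2))$, the entire tail plus all cross terms at $\tilde p_i$ are bounded by $\tfrac{2}{\pi}\bigl(\Area(\DD)-\Area(B(0;\epsilon_0/2))\bigr)$, using disjointness of the balls $\{\gamma^{-1}\sigma_{\tilde p_j}B(0;\epsilon_0/2)\}$. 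This yields $|\mu(\tilde p_i)|\ge\tfrac12\bigl(\tanh^2(\epsilon_0/4)-\tfrac12\bigr)$, which is positive exactly when $\epsilon_0>2\ln(3+2\sqrt2)$. So the threshold is not an artifact of orbit counting but comes precisely from $B_{eu}(0;1/\sqrt2)$ being the largest disk on which $f$ can fail to be subharmonic.
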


\begin{remark}
When $n=1$ and $X_g$ has large injectivity radius, Theorem \ref{key-lemma} was obtained in \cite{WW15}. I am kindly told by S. Wolpert that the method in Section 2 of Chapter 8 in his book \cite{Wolpertbook} can also lead to the existence of such a harmonic Beltrami differential for this special case that $n=1$ and $X_g$ has large injectivity radius.
\end{remark}

\noindent \textbf{Notation.} In this paper, we say 
$$f_1(g) \asymp f_2(g)$$ 
if there exists a universal constant $C>0$, independent of $g$, such that
$$  \frac{f_2(g)}{C} \leq f_1(g) \leq C f_2(g).$$

\noindent \textbf{Plan of the paper.} Section \ref{np} provides some necessary background and the basic properties of the Weil-Petersson metric that we will need.  In Section \ref{const} we construct the harmonic Beltrami differentials which hold for Theorem \ref{key-lemma}. We establish Theorem \ref{key-lemma} in Section \ref{ue} and \ref{key}. Then we apply Theorem \ref{key-lemma} to prove Theorem \ref{mt-1} and \ref{mt-2} in Section \ref{mt-1-2}. In Section \ref{mt-3-4} we will prove Theorem \ref{mt-3} and \ref{mt-4}. Acknowledgements are given in the last section.

\section{Notations and Preliminaries}\label{np} 
In this section we will set our notations and provide some necessary background material on surface theory and \wep metric.
\subsection{Hyperbolic disk} Let $\DD$ be the unit disk in the plane endowed with the hyperbolic metric $\rho(z)|dz|^2$ where
$$\rho(z)=\frac{4}{(1-|z|^2)^2}.$$

The distance to the origin is 
$$\dist_{\DD}(0,z)=\ln{\frac{1+|z|}{1-|z|}}.$$

For all $r\geq 0$, let $B(0;r)=\{z\in \DD; \dist_{\DD}(0,z)< r\}$ and $B_{eu}(0;r)=\{z\in \DD; |z|<r\}$. Then, the relation between the hyperbolic geodesic ball and Euclidean geodesic ball is given by the following equation.
$$B(0;r)=B_{eu}(0; \frac{e^r-1}{e^r+1}).$$ 

Let $\Aut(\DD)$ be the automorphism group of $\DD$. For any $\gamma \in \Aut(\DD)$ there exist two constants $a\in \DD$ and $\theta \in [0, 2\pi)$ such that 
$$\gamma(z)=\exp(\textbf{i}\theta) \frac{z-a}{1-\overline{a}z}.$$

The transitivity of the action of $\Aut(\DD)$ on $\DD$ tells that for all  $z \in \DD$ and $ \gamma \in \Aut(\DD)$, 
$$\rho(\gamma(z))|\gamma'(z)|^2=\rho(z).$$ 

\subsection{Bergman projection}
In this subsection we briefly review the formula for the Bergman projection, which is a classical tool to construct harmonic Beltrami differentials on Riemann surfaces. One may refer to \cite{Ahlfors61} for more details.

Let $X_g$ be a hyperbolic surface and $\Gamma_g$ be its associated Fuchsian group. A complex-valued function $u$ on $\mathbb{D}$ is called a \textsl{measurable automorphic form} of weight $-4$ with respect to $\Gamma_g$ on $\mathbb{D}$ if it is a measurable function on $\mathbb{D}$, and satisfies that
\begin{eqnarray*}
u(\gamma\circ z) \gamma'(z)^2=u(z), \quad \forall z \in \mathbb{D}, \  \gamma \in \Gamma_g.
\end{eqnarray*} 

If we allow a measurable automorphic form $u$ of weight $-4$ to be holomorphic on $\DD$, then we call $u$ is a \textsl{holomorphic automorphic form} of weight $-4$. We denote by $A_{2}(\DD,\Gamma_g)$ the complex vector space of all holomorphic automorphic functions of weight $-4$ with respect to $\Gamma_g$, which is a $(6g-6)$-dimensional linear space.

Let $BL_{2}^{\infty}(\mathbb{D},\Gamma_g)$ be the set of all measurable Beltrami automorphic forms of weight $-4$ with respect to $\Gamma_g$ on $\DD$ with
\begin{eqnarray*}
||f||_{\infty}=esssup_{z\in \DD} |f(z)|<\infty
\end{eqnarray*} 
where $f(z)=\frac{\overline{u}(z)}{\rho(z)}$ for some measurable automorphic form $u(z)$ of weight $-4$ with respect to $\Gamma_g$ on $\DD$.

Recall the \textsl{Bergman Kernel} function $K(z, \xi)$ of the unit disk $\DD$ is given by
\begin{eqnarray}\label{3-0}
K(z, \xi)=\frac{12}{\pi(1-z\overline{\xi})^4}=\sum_{n=0}^{\infty}\frac{2}{\pi}(n+1)(n+2)(n+3)(z\overline{\xi})^n
\end{eqnarray} 
where $z$ and $\xi$ is arbitrary in $\DD$.

A direct computation gives that 
\begin{eqnarray}\label{3-1}
K(\gamma \circ z, \gamma \circ \xi) \gamma'(z)^2 \overline{\gamma'(\xi)}^2=  K(z, \xi)
\end{eqnarray} 
for all $\gamma \in \Aut(\DD)$.

The \textsl{Bergman projection} $\beta_2$ of $BL_{2}^{\infty}(\mathbb{D},\Gamma_g)$ onto $A_2(\DD,\Gamma_g)$ is given by the following theorem.

\begin{theorem}[\cite{Ahlfors61}, Formula (1.18)]\label{bp}
For any $f\in BL_{2}^{\infty}(\mathbb{D},\Gamma_g)$. Let $\xi=x+y\textbf{i} \in \DD$ and set
$$(\beta_2f)(z)=\iint_{\DD}\overline{f(\xi)} K(z,\xi)dxdy, \quad \forall z \in \DD.$$
Then we have 
$$\beta_2f \in A_2(\DD,\Gamma_g).$$
\end{theorem}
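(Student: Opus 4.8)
The plan is to verify the two defining properties of membership in $A_2(\DD,\Gamma_g)$: that $\beta_2 f$ is holomorphic on $\DD$, and that it obeys the weight $-4$ automorphic transformation law $(\beta_2 f)(\gamma\circ z)\,\gamma'(z)^2=(\beta_2 f)(z)$ for every $\gamma\in\Gamma_g$. Since $A_2(\DD,\Gamma_g)$ is by definition exactly the space of holomorphic automorphic forms of weight $-4$, these two checks suffice.

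For holomorphy, I would fix a compact disk $\{|z|\le r\}$ with $r<1$. For $\xi\in\DD$ one has $|z\overline{\xi}|\le r<1$, so by \eqref{3-0} the kernel satisfies $|K(z,\xi)|\le \tfrac{12}{\pi(1-r)^4}$, uniformly in $\xi$. Combined with $\|f\|_\infty<\infty$ and the finite Euclidean area of $\DD$, the integrand $\overline{f(\xi)}K(z,\xi)$ is dominated by an integrable function independent of $z$ on this compact disk, while $K(z,\xi)$ is holomorphic in $z$ there. Hence I may differentiate under the integral sign (equivalently, apply Morera's theorem together with Fubini to interchange the area integral with a contour integral), concluding that $\beta_2 f$ is holomorphic on $\DD$.

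The automorphic law is the main computation. Writing $dA$ for the Euclidean area element and substituting $\xi=\gamma\circ\eta$, which preserves $\DD$ since $\gamma\in\Aut(\DD)$ and has Jacobian $dA(\xi)=|\gamma'(\eta)|^2\,dA(\eta)$, I get
\[(\beta_2 f)(\gamma\circ z)=\iint_{\DD}\overline{f(\gamma\circ\eta)}\,K(\gamma\circ z,\gamma\circ\eta)\,|\gamma'(\eta)|^2\,dA(\eta).\]
I would then feed in three transformation rules: the kernel identity \eqref{3-1}, which gives $K(\gamma\circ z,\gamma\circ\eta)=K(z,\eta)\,\gamma'(z)^{-2}\,\overline{\gamma'(\eta)}^{-2}$; the relation $\rho(\gamma\circ\eta)|\gamma'(\eta)|^2=\rho(\eta)$; and the weight $-4$ law for the automorphic form $u$ defining $f=\overline{u}/\rho$. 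A short calculation from the last two yields $f(\gamma\circ\eta)=f(\eta)\,\gamma'(\eta)/\overline{\gamma'(\eta)}$, hence $\overline{f(\gamma\circ\eta)}=\overline{f(\eta)}\,\overline{\gamma'(\eta)}/\gamma'(\eta)$. Substituting everything, the factors $\overline{\gamma'(\eta)}/\gamma'(\eta)$, $\overline{\gamma'(\eta)}^{-2}$, and $|\gamma'(\eta)|^2$ multiply to $1$, leaving
\[(\beta_2 f)(\gamma\circ z)=\gamma'(z)^{-2}\iint_{\DD}\overline{f(\eta)}\,K(z,\eta)\,dA(\eta)=\gamma'(z)^{-2}(\beta_2 f)(z),\]
which is precisely the desired identity.

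The only genuinely delicate point is the bookkeeping of the conjugated and unconjugated derivative factors in this last step: one must check that the three contributions coming from the kernel identity, from $\overline{f}$, and from the change-of-variables Jacobian cancel exactly. Everything else (convergence, holomorphy, and the substitution itself) is routine, given the uniform bound on $K$ over compact subsets of $\DD$ and the essential boundedness of $f$.
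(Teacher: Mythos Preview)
Your proof is correct. The holomorphy argument via the uniform kernel bound on compact sub-disks is standard and valid, and your verification of the weight $-4$ automorphic law is carried out carefully; the three derivative factors do indeed cancel as you describe.

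The paper itself does not prove this theorem: its ``proof'' consists solely of a citation to Theorem~7.3 in \cite{IT92} (the statement is already attributed to Ahlfors \cite{Ahlfors61}). So you have supplied a self-contained argument where the paper is content to defer to the literature. Your approach is exactly the classical one underlying those references, so there is no methodological divergence to discuss.
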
 
\begin{proof}
One can also see Theorem 7.3 in \cite{IT92}.
\end{proof}

\subsection{Surfaces and \wep metric}
Let $S_{g}$ be a closed surface of genus $g\geq 2$ and $T_{g}$ be the Teichm\"uller space of $S_{g}$. The tangent space at a point $X_g=(S_g,\sigma(z)|dz|^2)$ is identified with the space of harmonic Beltrami differentials on $X_g$ which are forms of 
$\mu=\frac{\overline{\psi}}{\sigma}$ where $\psi$ is a holomorphic quadratic differential on $X_g$. Let $dA(z)=\sigma(z) dxdy$ be the volume form of $X_g=(S_g,\sigma(z)|dz|^2)$ where $z=x+y \textbf{i}$. The \textit{Weil-Petersson metric} is the Hermitian
metric on $T_{g}$ arising from the the \textit{Petersson scalar  product}
\begin{equation}
 <\varphi,\psi>_{WP}= \int_S \frac{\varphi (z)}{\sigma(z)}  \frac{\overline{\psi(z)}}{\sigma(z)} dA(z)\nonumber
\end{equation}
via duality. We will concern ourselves primarily with its Riemannian part $g_{WP}$. Let $\Teich(S_g)$ denote the Teichm\"uller space endowed with the Weil-Petersson metric. The mapping class group $\Mod(S_g)$ acts properly discontinuously on $\Teich(S_g)$ by isometries. The moduli space $\mathbb{M}_g$ of Riemann surfaces, endowed with the \wep metric, is defined as 
$$\mathbb{M}_g:=\Teich(S_g)/\Mod(S_g).$$

The following proposition has been proved in a lot of literature. For examples one can refer to \cite{Huang07, Teo09, Wolpert12}.  We use the following form which is proven by Teo through using the Taylor series expansion for a holomorphic function.

\begin{proposition}[\cite{Teo09}, Proposition 3.1]\label{lnubw}
Let $X_g\in \mathbb{M}_g$ and $\mu \in T_{X_g}\mathbb{M}_g$ be a harmonic Beltrami differential of $X_g$. Then, for any $p \in X_g$ and $0<r\leq \inj(p)$, 
\[|\mu(p)|^2 \leq C_1(r) \int_{B(p;r)}{|\mu(z)|^2 dA(z)}\]
where the constant $C_1(r)=(\frac{4\pi}{3}(1-(\frac{4e^{r}}{(1+e^{r})^2})^3)))^{-1}$ and $B(p;r)\subset X_g$ is the geodesic ball of radius $r$ centered at $p$. 
\end{proposition}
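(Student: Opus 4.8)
The plan is to reduce everything to an estimate for a single holomorphic function on a Euclidean disk, and then to extract $|\mu(p)|$ from the lowest Taylor coefficient.

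\textbf{Step 1 (Lift to the disk).} Since $r \leq \inj(p)$, the exponential map identifies $B(p;r)$ isometrically with the hyperbolic ball $B(0;r)\subset \DD$ centered at the origin. Concretely I would uniformize $X_g=\DD/\Gamma_g$ so that a lift of $p$ is $0$; the injectivity hypothesis guarantees that $B(0;r)$ projects injectively, so integration over $B(p;r)$ agrees with integration over $B(0;r)$. Writing the harmonic Beltrami differential as $\mu=\overline{u}/\rho$ for a function $u$ holomorphic on $\DD$, the pointwise norm $|\mu|=|u|/\rho$ descends to a well-defined function on $X_g$, and since $\rho(0)=4$ I obtain $|\mu(p)|^2=|\mu(0)|^2=|u(0)|^2/16$.

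\textbf{Step 2 (Polar coordinates and orthogonality).} Using $dA=\rho\,dx\,dy$ on the lift, the right-hand integral becomes
\[
\int_{B(p;r)}|\mu|^2\,dA=\int_{B(0;r)}\frac{|u(z)|^2}{\rho(z)}\,dx\,dy=\int_{B(0;r)}|u(z)|^2\,\frac{(1-|z|^2)^2}{4}\,dx\,dy.
\]
By the dictionary $B(0;r)=B_{eu}(0;R)$ with $R=\frac{e^r-1}{e^r+1}$, I pass to polar coordinates $z=s\,e^{\textbf{i}\theta}$ and expand $u(z)=\sum_{n\geq 0}a_n z^n$. Integrating in $\theta$ annihilates every cross term $a_m\overline{a_n}$ with $m\neq n$, leaving
\[
\int_{B(p;r)}|\mu|^2\,dA=\frac{\pi}{2}\sum_{n\geq 0}|a_n|^2\int_0^R s^{2n+1}(1-s^2)^2\,ds.
\]

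\textbf{Step 3 (Truncate and compute).} Every radial integral is positive, so discarding all $n\geq 1$ terms yields a lower bound by the $n=0$ term alone. The substitution $w=s^2$ evaluates $\int_0^R s(1-s^2)^2\,ds=\frac{1-(1-R^2)^3}{6}$, and the algebraic identity $1-R^2=\frac{4e^r}{(1+e^r)^2}$ rewrites this in the form appearing in $C_1(r)$. Combining with $|u(0)|^2=|a_0|^2=16\,|\mu(p)|^2$ from Step 1 gives
\[
\int_{B(p;r)}|\mu|^2\,dA\geq \frac{4\pi}{3}\Bigl(1-\bigl(\tfrac{4e^r}{(1+e^r)^2}\bigr)^3\Bigr)\,|\mu(p)|^2,
\]
which is exactly the claimed inequality after inverting the constant.

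\textbf{Main obstacle.} The argument is essentially elementary once the reduction of Step 1 is in place, and the only genuine input is the mean-value/orthogonality computation of Step 2 together with the positivity of the truncated radial integrals. The point requiring the most care is bookkeeping: verifying that the pointwise norm $|\mu|$ is coordinate-independent so that it may be read off at the origin, checking that the injectivity hypothesis really makes $B(p;r)$ embed so that no $\Gamma_g$-overcounting enters the integral (such overcounting would only strengthen the inequality, but I prefer the clean isometric identification), and confirming that the radial integral simplifies to the stated closed form via $1-R^2=\frac{4e^r}{(1+e^r)^2}$. I do not anticipate any substantive difficulty beyond these routine verifications.
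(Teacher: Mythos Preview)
Your proof is correct and follows exactly the approach the paper attributes to Teo: a Taylor expansion of the holomorphic function $u$ on the lifted disk, orthogonality in the angular variable, and truncation to the constant term. The paper itself does not spell out the argument but simply cites \cite{Teo09} and \cite{WW15}, so your write-up is in fact more detailed than what appears there.
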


\begin{proof}
One can also see Proposition 2.10 in \cite{WW15}.
\end{proof}

One may refer to \cite{IT92, Wolpertbook} for more details on the \wep metric.

\subsection{Riemannian tensor of the Weil-Petersson metric.} The \wep curvature tensor is given by the following. Let $\mu_{\alpha},\mu_{\beta}$ be two elements in the tangent space at $X_g$, and 
\begin{eqnarray*}
g_{\alpha \overline{\beta}}=\int_{X_g} \mu_{\alpha} \cdot  \overline{\mu_{\beta}}  dA.
\end{eqnarray*} 

For the inverse of $(g_{i\overline{j}})$, we use the convention
\begin{eqnarray*}
g^{i\overline{j}} g_{k\overline{j}}=\delta_{ik}.
\end{eqnarray*}

The curvature tensor is given by
\begin{eqnarray*}
R_{i\overline{j}k\overline{l}}=\frac{\partial^2}{\partial t^{k}\partial \overline{t^{l}}}g_{i\overline{j}}-g^{s\overline{t}}\frac{\partial}{\partial t^{k}}g_{i\overline{t}}\frac{\partial}{\partial \overline{t^{l}}}g_{s\overline{j}}.
\end{eqnarray*}

Let $D=-2(\Delta-2)^{-1}$ where $\Delta$ is the Beltrami-Laplace operator on $X_g=(S_g,\sigma(z)|dz|^2)$. The following curvature formula was established by Tromba and Wolpert independently in \cite{Tromba86, Wolpert86}, which has been applied to study various curvature properties of the Weil-Petersson metric in the past thirty years. 
\begin{theorem}[Tromba-Wolpert]\label{cfow} 
The curvature tensor satisfies
\[R_{i\overline{j}k\overline{l}}=\int_{X_g} D(\mu_{i}\mu_{\overline{j}})\cdot (\mu_{k}\mu_{\overline{l}})  dA+\int_{X_g} D(\mu_{i}\mu_{\overline{l}})\cdot (\mu_{k}\mu_{\overline{j}})dA.\]
\end{theorem}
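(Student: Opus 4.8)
The plan is to exploit the Kähler structure of the Weil--Petersson metric, reducing the curvature tensor to a single second variation of the metric coefficients, and then to identify the operator $D$ by solving the linearized constant-curvature equation. Since the metric is Kähler, the curvature tensor is
\[R_{i\overline{j}k\overline{l}}=\frac{\partial^2}{\partial t^{k}\partial \overline{t^{l}}}g_{i\overline{j}}-g^{s\overline{t}}\frac{\partial}{\partial t^{k}}g_{i\overline{t}}\frac{\partial}{\partial \overline{t^{l}}}g_{s\overline{j}},\]
so the first task is to arrange coordinates in which the quadratic second term drops out at the base point.

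First I would take harmonic Beltrami differentials $\mu_1,\dots,\mu_{3g-3}$ as a basis of the tangent space at $X_g$ and use the associated coordinates $(t^1,\dots,t^{3g-3})$ on $\Teich(S_g)$. The key input here is Ahlfors' theorem that in these coordinates the first derivatives of the metric vanish at the base point, $\frac{\partial}{\partial t^{k}}g_{i\overline{j}}\big|_{t=0}=0$. This kills the second term above at $t=0$ and reduces the entire computation to the pure second derivative $\frac{\partial^2}{\partial t^{k}\partial\overline{t^{l}}}g_{i\overline{j}}\big|_{0}$.

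Next I would make the deformation explicit. Nearby points of Teichm\"uller space are produced by solving the Beltrami equation with coefficient $t^{k}\mu_k+\overline{t^{l}}\,\overline{\mu_l}$ (summed), and pulling the hyperbolic metric of the deformed surface back to $X_g$ gives a family of conformal factors $\sigma_t=e^{2\phi_t}$ on the fixed surface. Writing $g_{i\overline{j}}(t)=\int \mu_i^{(t)}\,\overline{\mu_j^{(t)}}\,\sigma_t\,dx\,dy$ and differentiating twice funnels the calculation into the second variation of $\log\sigma_t$. The central analytic step is that imposing Gauss curvature $-1$ yields, after linearization, the elliptic equation $(\Delta-2)\,\dot u = 2\,\mu_k\overline{\mu_l}$ for the mixed variation $\dot u$ of the conformal factor; since $D=-2(\Delta-2)^{-1}$, this means $\dot u=-D(\mu_k\overline{\mu_l})$. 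Substituting this back, using the harmonicity of the $\mu_i$ to force the stray first-order terms to cancel, and using the self-adjointness of $D$ on $X_g$ to shift it onto the desired factor, produces integrals of the form $\int_{X_g}D(\mu_i\mu_{\overline{j}})\cdot(\mu_k\mu_{\overline{l}})\,dA$. Two such integrals appear, one for each way the barred indices pair with the unbarred ones ($\overline{j}$ with $i$ versus $\overline{l}$ with $i$), yielding the stated symmetric sum.

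The main obstacle I anticipate is the second-variation bookkeeping in this middle step: tracking precisely how the harmonic Beltrami differentials fail to remain harmonic off the base point, how the conformal factor changes to second order, and checking that all contributions assemble with the correct constants. In particular, establishing that the mixed variation of $\log\sigma_t$ solves $(\Delta-2)u=2\mu_k\overline{\mu_l}$ with exactly the right coefficient, and that the non-$D$ terms cancel after integration by parts using $\mu=\overline{\psi}/\sigma$, is where the delicate local computation in the disk model must be done with care. Once that identification is in place, the remaining assembly of the two integral terms is formal.
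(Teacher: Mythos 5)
The paper does not prove this statement at all: it is quoted as Theorem 2.3 with attribution to Tromba \cite{Tromba86} and Wolpert \cite{Wolpert86}, so there is no in-paper argument to compare against. Your sketch is, in outline, exactly the strategy of the original proofs (particularly Wolpert's): exploit the K\"ahler property, use Ahlfors' theorem that the first derivatives of $g_{i\overline{j}}$ vanish in the coordinates induced by harmonic Beltrami differentials to kill the quadratic term, and then identify the mixed second variation of the hyperbolic conformal factor by linearizing the curvature $-1$ equation, which is precisely where the operator $(\Delta-2)$ and hence $D=-2(\Delta-2)^{-1}$ enters; self-adjointness of $D$ and the two index pairings then give the two integral terms. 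Two caveats: your Beltrami coefficient should be $\sum_k t^k\mu_k$ alone (the conjugates $\overline{\mu_l}$ are $(1,-1)$-forms, not Beltrami differentials, so the expression $t^k\mu_k+\overline{t^l}\,\overline{\mu_l}$ is not meaningful as a deformation datum), and the proposal remains a plan --- the second-variation bookkeeping you flag as the main obstacle is indeed the entire technical content of Tromba's and Wolpert's papers, so as written this is a correct road map rather than a proof.
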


Recall that a holomorphic sectional curvature is a Riemannian sectional curvature along a holomorphic plane. Thus, Theorem \ref{cfow} gives that

\begin{proposition}[The formula of holomorphic sectional curvature]\label{ffhc}
Let $X_g\in \mathbb{M}_g$ and $\mu \in T_{X_g}\mathbb{M}_g$. Then the \wep holomorphic sectional curvature $\HK(\mu)$ along the holomorphic plane spanned by $\mu$ is
\[\HK(\mu)=\frac{-2\int_{X_g} D(|\mu|^2)\cdot |\mu|^2 dA}{||\mu||_{WP}^4}.\]
\end{proposition}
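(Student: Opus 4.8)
The plan is to derive Proposition \ref{ffhc} directly from the Tromba--Wolpert curvature formula (Theorem \ref{cfow}) by specializing all four tangent directions to a single harmonic Beltrami differential $\mu$, and then translating the resulting diagonal component of the complexified curvature tensor into the Riemannian sectional curvature of the holomorphic plane.

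First I would recall the defining relation between the holomorphic sectional curvature and the curvature tensor. By definition $\HK(\mu)$ is the Riemannian sectional curvature of the real two-plane spanned by $\mu$ and $J\mu$, where $J$ is the complex structure of $\M_g$. For a K\"ahler metric this plane is nondegenerate, and writing the real vectors $\mu,J\mu$ in terms of the $(1,0)$-vector $\mu$ and its conjugate $\overline{\mu}$, and using that the only nonvanishing components of a K\"ahler curvature tensor are of type $R_{i\overline{j}k\overline{l}}$, a standard computation shows
\[\HK(\mu)=\frac{- R_{i\overline{j}k\overline{l}}\,\mu^{i}\overline{\mu}^{j}\mu^{k}\overline{\mu}^{l}}{\|\mu\|_{WP}^{4}},\]
where the normalization factors produced by expanding $\mu,J\mu$ cancel exactly against those in $\|\mu\|_{WP}^{4}=\big(g_{i\overline{j}}\mu^{i}\overline{\mu}^{j}\big)^{2}$, and the overall minus sign records the sign convention chosen for $R_{i\overline{j}k\overline{l}}$ above Theorem \ref{cfow}. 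With that convention one in fact has $R_{i\overline{j}k\overline{l}}\,\mu^{i}\overline{\mu}^{j}\mu^{k}\overline{\mu}^{l}>0$, consistent with the negativity of $\HK$.

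Next I would substitute into Theorem \ref{cfow}. Since the formula there is multilinear in the Beltrami differentials, I may set $\mu_i=\mu_k=\mu$ and $\mu_{\overline{j}}=\mu_{\overline{l}}=\overline{\mu}$ directly; then each of the four products $\mu_i\mu_{\overline{j}}$, $\mu_i\mu_{\overline{l}}$, $\mu_k\mu_{\overline{l}}$, $\mu_k\mu_{\overline{j}}$ equals $|\mu|^2$, so the two integrals coincide and
\[R_{i\overline{j}k\overline{l}}\,\mu^{i}\overline{\mu}^{j}\mu^{k}\overline{\mu}^{l}=2\int_{X_g} D(|\mu|^2)\cdot|\mu|^2\,dA.\]
Combining this with the displayed expression for $\HK(\mu)$ yields
\[\HK(\mu)=\frac{-2\int_{X_g} D(|\mu|^2)\cdot|\mu|^2\,dA}{\|\mu\|_{WP}^{4}},\]
which is exactly the assertion.

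The only genuinely delicate point is the bookkeeping in the second paragraph: verifying the precise numerical factor and sign relating the Riemannian sectional curvature $K(\mathrm{span}(\mu,J\mu))$ to the single diagonal component $R_{i\overline{j}k\overline{l}}\,\mu^{i}\overline{\mu}^{j}\mu^{k}\overline{\mu}^{l}$, given the particular convention $R_{i\overline{j}k\overline{l}}=\partial_k\partial_{\overline{l}}g_{i\overline{j}}-g^{s\overline{t}}\partial_k g_{i\overline{t}}\partial_{\overline{l}}g_{s\overline{j}}$ fixed above Theorem \ref{cfow}. Once that factor is pinned down, everything after it is a one-line substitution.
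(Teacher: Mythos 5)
Your proposal is correct and takes essentially the same route as the paper: the paper likewise treats the Proposition as an immediate consequence of Theorem \ref{cfow}, specializing all four directions to $\mu$ (so both integrals coincide and the diagonal component equals $2\int_{X_g} D(|\mu|^2)\cdot|\mu|^2\,dA$) and invoking the standard K\"ahler fact that the holomorphic sectional curvature is the Riemannian sectional curvature of the plane spanned by $\mu$ and $J\mu$, which under the paper's conventions is $-R_{\mu\overline{\mu}\mu\overline{\mu}}/\|\mu\|_{WP}^4$. The factor $-1$ you flag as the delicate point is indeed the right one here: it is precisely the normalization under which this formula reproduces the Tromba--Wolpert upper bound $-1/(2\pi(g-1))$ quoted in the introduction, so your bookkeeping is consistent with the paper's conventions.
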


We enclose this section by the following proposition, whose proof relies on Proposition \ref{lnubw}, Lemma 5.1 in \cite{Wolf12} and the Cauchy-Schwartz inequality. This proposition will be applied several times in this article. The statement is slightly different from Proposition 2.11 in \cite{WW15}.
\begin{proposition}\label{ratio}
Let $X_g\in \mathbb{M}_g$ and $\mu \in T_{X_g}\mathbb{M}_g$ be a harmonic Beltrami differential of $X_g$. Then, the \wep holomorphic sectional curvature $\HK(\mu)$ satisfies that for any $p \in X_g$, 
\[  -\frac{2 \sup_{z\in X}|\mu(z)|^2}{||\mu||_{WP}^2}\leq \HK(\mu)\leq -\frac{C_2(\inj(p)) |\mu(p)|^4}{||\mu||_{WP}^4}\]
where the constant $C_2(\inj(p))>0$ only depends on the injectivity radius $\inj(p)$ at $p$.
\end{proposition}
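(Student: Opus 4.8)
The plan is to reduce the whole statement to a two-sided estimate for the single quantity
\[ I(\mu):=\int_{X_g} D(|\mu|^2)\cdot |\mu|^2\, dA, \]
since Proposition \ref{ffhc} gives $\HK(\mu)=-2I(\mu)/\|\mu\|_{WP}^4$ and $\|\mu\|_{WP}^2=\int_{X_g}|\mu|^2\,dA$. I would first isolate the two properties of $D=-2(\Delta-2)^{-1}=2(2-\Delta)^{-1}$ that carry the argument. First, $D$ is self-adjoint and positive with operator norm $\|D\|\le 1$: since $-\Delta\ge 0$ the operator $2-\Delta$ has spectrum in $[2,\infty)$, so $D$ has spectrum in $(0,1]$ (the eigenvalue $1$ occurring on constants, consistent with $D(1)=1$). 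Second, $D$ has a strictly positive Green's kernel $G(z,w)>0$, because $2-\Delta$ is a Schr\"odinger operator with positive zeroth order term and hence satisfies the maximum principle. The left inequality will follow from the first property, the right inequality from the second together with Proposition \ref{lnubw}.

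For the lower bound (the less negative side) I would apply the Cauchy--Schwarz inequality in $L^2(X_g)$ to $D$ with $f=|\mu|^2$:
\[ I(\mu)=\langle Df,f\rangle\le \|Df\|_{L^2}\,\|f\|_{L^2}\le \|D\|\,\|f\|_{L^2}^2\le \int_{X_g}|\mu|^4\,dA\le \Big(\sup_{z\in X_g}|\mu(z)|^2\Big)\int_{X_g}|\mu|^2\,dA. \]
Dividing by $\|\mu\|_{WP}^4=(\int_{X_g}|\mu|^2\,dA)^2$ produces exactly $\HK(\mu)\ge -2\sup_{z}|\mu(z)|^2/\|\mu\|_{WP}^2$.

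For the upper bound (the more negative side) I would set $r=\inj(p)$ and use the positivity of $G$ to discard all contributions coming from outside the ball $B(p;r)$:
\[ I(\mu)=\iint_{X_g\times X_g}G(z,w)|\mu(z)|^2|\mu(w)|^2\,dA(z)\,dA(w)\ge \iint_{B(p;r)\times B(p;r)}G(z,w)|\mu(z)|^2|\mu(w)|^2\,dA\,dA. \]
Here Lemma 5.1 in \cite{Wolf12} supplies the quantitative input: a uniform positive lower bound $G(z,w)\ge c(\inj(p))>0$ for $z,w\in B(p;r)$, with $c$ depending only on $r=\inj(p)$ through comparison with the model hyperbolic disk (inside a ball of radius $\inj(p)$ the metric is isometric to the disk, so the local contribution to the resolvent kernel is controlled by $\inj(p)$ alone). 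This yields $I(\mu)\ge c(\inj(p))\big(\int_{B(p;r)}|\mu|^2\,dA\big)^2$, and Proposition \ref{lnubw}, rearranged as $\int_{B(p;r)}|\mu|^2\,dA\ge |\mu(p)|^2/C_1(\inj(p))$, then gives $I(\mu)\ge \frac{c(\inj(p))}{C_1(\inj(p))^2}|\mu(p)|^4$. Setting $C_2(\inj(p))=2c(\inj(p))/C_1(\inj(p))^2$ and dividing by $\|\mu\|_{WP}^4$ gives the right inequality.

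The main obstacle is the quantitative form of positivity, i.e.\ the uniform lower bound $G(z,w)\ge c(\inj(p))$ on $B(p;r)\times B(p;r)$ that is insensitive to the global geometry and topology of $X_g$ and depends only on $\inj(p)$; this is precisely what I would extract from Lemma 5.1 of \cite{Wolf12}. Everything else is either spectral (the bound $\|D\|\le 1$) or already packaged in Proposition \ref{lnubw}. The one point I would verify carefully is that the constant coming from Lemma 5.1 depends only on $r$ and not on the genus $g$, so that the resulting $C_2(\inj(p))$ is genuinely a local constant.
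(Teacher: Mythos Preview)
Your proposal is correct and follows essentially the same route the paper sketches: the paper itself omits the details, stating only that the argument combines Proposition~\ref{lnubw}, Lemma~5.1 of \cite{Wolf12}, and the Cauchy--Schwarz inequality, and these are exactly the three ingredients you invoke in the places one would expect. Your caveat about verifying that the constant from Lemma~5.1 of \cite{Wolf12} depends only on $\inj(p)$ and not on $g$ is the right thing to check, and is indeed the content of that lemma.
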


\begin{proof}
It follows from the same argument as the proof of Proposition 2.11 in \cite{WW15}. We leave it as an exercise.
\end{proof}

\section{Construction for the objective harmonic Beltrami differentials}\label{const}
In this section we will construct the harmonic Beltrami differentials which hold for Theorem \ref{key-lemma}.

First we deal with the case $n=1$ in Theorem \ref{key-lemma}. Let $X_g \in \mathbb{M}_g$ be a hyperbolic surface, $p \in X_g$ and $\inj(p)$ be the injectivity radius of $X_g$ at $p$. For any constant $r \in (0, \inj(p)]$, we consider the characteristic function 
\begin{eqnarray*}
\nu_0(z):=\begin{cases} 1, \quad \forall z \in B(p;r).\\
0, \quad \textit{otherwise}.
\end{cases}
\end{eqnarray*} 
Where $B(p;r)\subset X_g$ is the geodesic ball of radius $r$ centered at $p$.

Consider the covering map $\pi:\DD \to X_g$. Up to a conjugation, we lift $p$ to $0\in \DD$ and let $\Gamma_g$ denote its associated Fuchsian group. Then, it is not hard to see that $\nu_0$ can be lifted to $\tilde{\nu}_0 \in HL_{2}^{\infty}(\mathbb{D},\Gamma_g)$ satisfying that for all $\gamma \in \Gamma_g$, 
\begin{eqnarray}\label{3-3}
\tilde{\nu}_0(z):=\begin{cases} \frac{\gamma'(\gamma^{-1}\circ z)}{\overline{\gamma'}(\gamma^{-1}\circ z)}, \quad  \forall z \in \gamma \circ B(0;r).\\
0, \quad \textit{otherwise}.
\end{cases}
\end{eqnarray} 

We apply the Bergman projection $\beta_2$ to $\tilde{\nu}_0$.
\begin{lemma}\label{hqd-1}
Let $\tilde{\nu}_0 \in  HL_{2}^{\infty}(\mathbb{D},\Gamma_g)$ given in equation (\ref{3-3}) . Then, we have
$$(\beta_2 \tilde{\nu}_0)(z)=12 (\frac{e^{r}-1}{e^{r}+1})^2 \sum_{\gamma \in \Gamma_g} \gamma'(z)^2. $$
\end{lemma}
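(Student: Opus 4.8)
The plan is to compute the Bergman projection $(\beta_2\tilde\nu_0)(z) = \iint_{\DD}\overline{\tilde\nu_0(\xi)}\,K(z,\xi)\,dx\,dy$ directly from the definition in Theorem~\ref{bp}, using the explicit piecewise formula~(\ref{3-3}) for $\tilde\nu_0$. First I would split the integral over $\DD$ into the pieces on which $\tilde\nu_0$ is supported, namely the translates $\gamma\circ B(0;r)$ for $\gamma\in\Gamma_g$. Since $\tilde\nu_0$ vanishes off $\bigcup_\gamma \gamma\circ B(0;r)$ and these translates are disjoint (as $r\leq\inj(p)$), the integral becomes $\sum_{\gamma\in\Gamma_g}\iint_{\gamma\circ B(0;r)}\overline{\tilde\nu_0(\xi)}\,K(z,\xi)\,dx\,dy$, where on the $\gamma$-piece $\overline{\tilde\nu_0(\xi)} = \overline{\gamma'(\gamma^{-1}\xi)}/\gamma'(\gamma^{-1}\xi)$.

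The key step is to change variables $\xi = \gamma\circ w$ in each summand to pull everything back to the single ball $B(0;r)$. Under this substitution the real area element transforms by the Jacobian $|\gamma'(w)|^2$, and I expect the weight-$(-4)$ automorphy of the Bergman kernel, equation~(\ref{3-1}), namely $K(\gamma\circ z,\gamma\circ w)\gamma'(z)^2\overline{\gamma'(w)}^2 = K(z,w)$, to conspire with the $\gamma'$-factors carried by $\tilde\nu_0$ so that after the substitution the integrand over $B(0;r)$ becomes simply $\gamma'(z)^2 \cdot K(0\text{-centered kernel piece})$ times a factor independent of $\gamma$. Concretely, I anticipate that all the $\gamma'(\gamma^{-1}\xi)$ terms and the Jacobian combine to leave $\gamma'(z)^2$ out front, reducing every summand to $\gamma'(z)^2\iint_{B(0;r)}\overline{\left(\tfrac{\overline{w}'}{w'}\right)}K(\param,w)\,\cdots$ — that is, a universal integral over the centered ball $B(0;r)$ multiplied by $\gamma'(z)^2$.

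The remaining task is then to evaluate that single universal integral $\iint_{B(0;r)} K(z,\xi)\,dx\,dy$ over the Euclidean ball $B(0;r) = B_{eu}(0;\frac{e^r-1}{e^r+1})$ (using the hyperbolic–Euclidean ball identity recorded in the preliminaries), with $z$ set appropriately so that the evaluation point is the origin after the centering. Using the power-series expansion of the kernel in~(\ref{3-0}), $K(z,\xi)=\sum_n \frac{2}{\pi}(n+1)(n+2)(n+3)(z\overline\xi)^n$, and integrating in polar coordinates over a disk centered at $0$, all terms with $n\geq 1$ integrate to zero by $\int_0^{2\pi}e^{\mathbf{i}n\theta}\,d\theta = 0$, so only the $n=0$ constant term $\tfrac{12}{\pi}$ survives. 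Integrating the constant $\tfrac{12}{\pi}$ over the Euclidean disk of radius $\tfrac{e^r-1}{e^r+1}$ gives $\tfrac{12}{\pi}\cdot\pi\bigl(\tfrac{e^r-1}{e^r+1}\bigr)^2 = 12\bigl(\tfrac{e^r-1}{e^r+1}\bigr)^2$, yielding the claimed constant.

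The main obstacle I expect is bookkeeping the complex-conjugate and reciprocal $\gamma'$-factors so that they cancel correctly against the Jacobian and the kernel's transformation law~(\ref{3-1}); in particular one must be careful that $\overline{\tilde\nu_0} = \gamma'/\overline{\gamma'}$ (the conjugate of the defining ratio) pairs with the factors $\gamma'(z)^2\overline{\gamma'(\xi)}^2$ from the kernel identity to leave precisely $\gamma'(z)^2$ after accounting for $\overline{\gamma'}\cdot\gamma' = |\gamma'|^2$ being the area Jacobian. Once the change of variables is verified to collapse each term to $\gamma'(z)^2$ times the same centered integral, summing over $\Gamma_g$ produces $12\bigl(\tfrac{e^r-1}{e^r+1}\bigr)^2\sum_{\gamma\in\Gamma_g}\gamma'(z)^2$, matching the statement.
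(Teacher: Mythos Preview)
Your proposal is correct and follows essentially the same route as the paper: split over the disjoint translates $\gamma\circ B(0;r)$, substitute $\xi=\gamma\circ w$, invoke the kernel transformation law~(\ref{3-1}), and use the power series~(\ref{3-0}) so that only the $n=0$ term survives over the centered Euclidean disk. One small correction to your expected bookkeeping: after the change of variables and the kernel identity the factor that actually emerges in each summand is $\dfrac{1}{\gamma'(\gamma^{-1}\circ z)^2}=(\gamma^{-1})'(z)^2$, not $\gamma'(z)^2$ directly, so you finish by re-indexing the sum over the group via $\gamma\mapsto\gamma^{-1}$ --- precisely the cancellation issue you flagged as needing care.
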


\begin{proof}
The proof is a direct computation.

Since $0<r \leq \inj(p)$, we have 
$$\gamma_1 \circ B(0;r) \cap  \gamma_2 \circ B(0;r)=\emptyset, \quad \forall \gamma_1 \neq \gamma_2\in \Gamma_g.$$

Let $\xi=x+y\textbf{i} \in \DD$. Theorem \ref{bp} gives that, for all $z\in \DD$,
\begin{eqnarray*}
(\beta_2 \tilde{\nu}_0)(z)&=&\iint_{\DD}\overline{ \tilde{\nu}_0(\xi)} K(z,\xi)dxdy\\
&=&\sum_{\gamma \in \Gamma_g} \iint_{\gamma \circ B(0;r)}\frac{\overline{\gamma'(\gamma^{-1}\circ \xi)}}{\gamma'(\gamma^{-1}\circ \xi)} K(z,\xi) dxdy\\
&=&  \sum_{\gamma \in \Gamma_g} \iint_{ B(0;r)}\frac{\overline{\gamma'(\xi)}}{\gamma'(\xi)} K(z,\gamma \circ \xi) |\gamma'(\xi)|^2dxdy.
\end{eqnarray*} 

Equation (\ref{3-1}) tells that
$$K(z, \gamma \circ \xi)=\frac{K(\gamma^{-1}\circ z, \xi)}{\gamma'(\gamma^{-1}\circ z))^2 \overline{\gamma'(\xi)^2}}.$$

Recall that
$$K(z,\xi)=\sum_{n=0}^{\infty}\frac{2}{\pi}(n+1)(n+2)(n+3)(z\overline{\xi})^n.$$

Hence,
\begin{eqnarray*}
(\beta_2 \tilde{\nu}_0)(z)&=& \sum_{\gamma \in \Gamma_g} \iint_{B(0;r)}\overline{\gamma'(\xi)^2} K(\gamma^{-1} \circ z, \xi)\frac{1}{(\gamma'(\gamma^{-1}\circ z))^2 \overline{\gamma'(\xi)^2}}dxdy  \\
&=& \sum_{\gamma \in \Gamma_g} (\sum_{n=0}^{\infty} \frac{2}{\pi}(n+1)(n+2)(n+3) \\
& & \times \iint_{B(0;r)}\frac{1}{(\gamma'(\gamma^{-1}\circ z))^2} (z\overline{\xi})^n dxdy)\\
&=& \frac{12}{\pi} \sum_{\gamma \in \Gamma_g} \frac{1}{(\gamma'(\gamma^{-1}\circ z))^2}  \iint_{B_{eu}(0;\frac{e^{r}-1}{e^{r}+1})} dxdy\\
&=&12 (\frac{e^{r}-1}{e^{r}+1})^2 \sum_{\gamma \in \Gamma_g}\gamma'(z)^2
\end{eqnarray*} 
where the last equality applies the fact that 
$$(\gamma'(\gamma^{-1}\circ z))^2=\frac{1}{(\gamma^{-1})'(z)^2}, \quad \forall \gamma \in \Gamma_g.$$
\end{proof}

\begin{remark}
When the surface has big enough injectivity radius, it was shown in \cite{WW15} that the Weil-Petersson holomorphic sectional curvature along the holomorphic plane spanned by the holomorphic quadratic differential $\sum_{\gamma \in \Gamma_g}\gamma'(z)^2$ is comparable to the maximal Weil-Petersson holomorphic sectional curvature of the moduli space at this surface. Moreover, it is comparable to $-1$.
\end{remark}

Let $p,q \in X_g$ be two points with $\dist(p,q)\geq 2r>0$ where $r$ is a constant satisfying that
\begin{eqnarray}\label{3-4}
0<r\leq \min \{\inj(p),\inj(q)\}.
\end{eqnarray}

We lift $p$ and $q$ to $0$ and  $\tilde{q}$ in $\DD$ respectively, which satisfies that
\begin{eqnarray}\label{3-5}
\dist_{\DD}(0, \tilde{q})=\dist(p,q)\geq 2r.
\end{eqnarray}

Let $\sigma_{\tilde{q}} \in \Aut(\DD)$ with $\sigma_{\tilde{q}}(0)=\tilde{q}$. Actually one may choose
$$\sigma_{\tilde{q}}(z)=\frac{z+\tilde{q}}{1+\overline{\tilde{q}}z}, \quad \forall z\in \DD.$$

We define a function $\tilde{\nu}_{1}\in HL_{2}^{\infty}(\DD,\Gamma_g)$ as follows. For all $\gamma \in \Gamma_g$,
\begin{eqnarray}\label{3-6}
\tilde{\nu}_1(z):=\begin{cases} \frac{\gamma'(\gamma^{-1}\circ z)}{\overline{\gamma'}(\gamma^{-1}\circ z)}, \quad  \forall z \in \gamma \circ B(0;r).\\
\frac{\gamma'(\gamma^{-1}\circ z)}{\overline{\gamma'}(\gamma^{-1}\circ z)}\times \frac{\sigma_{\tilde{q}}'((\gamma\circ \sigma_{\tilde{q}})^{-1} \circ z)}{\overline{\sigma_{\tilde{q}}'}((\gamma\circ \sigma_{\tilde{q}})^{-1}  \circ z)}, \quad  \forall z \in \gamma \circ B(\tilde{q};r).\\
0, \quad \textit{otherwise}.
\end{cases}
\end{eqnarray} 

Equations (\ref{3-4}) and (\ref{3-5}) tells that $\tilde{\nu}_1(z)$ is well-defined on $\DD$.

\begin{lemma}\label{hqd-2}
For any $z\in \DD$, we have
\begin{eqnarray*}
 \sum_{\gamma \in \Gamma_g} \iint_{\gamma \circ B(\tilde{q};r)}\frac{\overline{\gamma'}(\gamma^{-1}\circ \xi)}{\gamma'(\gamma^{-1}\circ \xi)} \frac{\overline{\sigma_{\tilde{q}}'}((\gamma\circ \sigma_{\tilde{q}})^{-1} \circ \xi)}{\sigma_{\tilde{q}}'((\gamma\circ \sigma_{\tilde{q}})^{-1}  \circ \xi)} K(z,\xi) dxdy =\sum_{\gamma \in \Gamma_g}( \sigma^{-1}_{\tilde{q}}\circ \gamma)'(z)^2.
\end{eqnarray*} 
\end{lemma}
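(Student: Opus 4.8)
The plan is to recognize the left-hand side as the Bergman projection applied to the part of $\tilde{\nu}_1$ supported on the balls $\{\gamma\circ B(\tilde{q};r)\}_{\gamma\in\Gamma_g}$, and then to reduce the computation \emph{verbatim} to Lemma \ref{hqd-1} by absorbing $\sigma_{\tilde{q}}$ into the deck transformation. The one genuinely new ingredient is an algebraic identity, coming from the chain rule, which collapses the \emph{two} automorphic factors appearing in the integrand into a \emph{single} automorphic factor attached to the element $\eta:=\gamma\circ\sigma_{\tilde{q}}$.

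Concretely, first I would observe that since $\sigma_{\tilde{q}}\in\Aut(\DD)$ is an isometry with $\sigma_{\tilde{q}}(0)=\tilde{q}$, we have $B(\tilde{q};r)=\sigma_{\tilde{q}}\circ B(0;r)$ and hence $\gamma\circ B(\tilde{q};r)=(\gamma\circ\sigma_{\tilde{q}})\circ B(0;r)=\eta\circ B(0;r)$. The hypothesis $r\leq\inj(q)$ from equation (\ref{3-4}) guarantees these balls are pairwise disjoint as $\gamma$ ranges over $\Gamma_g$, exactly as in Lemma \ref{hqd-1}, which legitimizes the interchange of sum and integral. Next, for $\xi\in\gamma\circ B(\tilde{q};r)$ set $w:=\eta^{-1}\circ\xi\in B(0;r)$, so that $\gamma^{-1}\circ\xi=\sigma_{\tilde{q}}(w)$. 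The chain rule gives $\eta'(w)=\gamma'(\sigma_{\tilde{q}}(w))\,\sigma_{\tilde{q}}'(w)=\gamma'(\gamma^{-1}\circ\xi)\,\sigma_{\tilde{q}}'((\gamma\circ\sigma_{\tilde{q}})^{-1}\circ\xi)$, so the product of the two factors in the integrand simplifies to
$$\frac{\overline{\gamma'(\gamma^{-1}\circ\xi)}}{\gamma'(\gamma^{-1}\circ\xi)}\cdot\frac{\overline{\sigma_{\tilde{q}}'((\gamma\circ\sigma_{\tilde{q}})^{-1}\circ\xi)}}{\sigma_{\tilde{q}}'((\gamma\circ\sigma_{\tilde{q}})^{-1}\circ\xi)}=\frac{\overline{\eta'(\eta^{-1}\circ\xi)}}{\eta'(\eta^{-1}\circ\xi)}.$$
Thus the left-hand side becomes $\sum_{\gamma\in\Gamma_g}\iint_{\eta\circ B(0;r)}\frac{\overline{\eta'(\eta^{-1}\circ\xi)}}{\eta'(\eta^{-1}\circ\xi)}K(z,\xi)\,dxdy$, which is formally identical to the integral handled in Lemma \ref{hqd-1} with $\gamma$ replaced by $\eta=\gamma\circ\sigma_{\tilde{q}}$.

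From here I would simply repeat the steps of Lemma \ref{hqd-1}: change variables $\xi=\eta\circ\zeta$ so that $dxdy=|\eta'(\zeta)|^2\,dudv$, invoke the transformation law (\ref{3-1}) to rewrite $K(z,\eta\circ\zeta)$, and check that the automorphic factor together with $|\eta'(\zeta)|^2$ and the conjugate Jacobian cancel identically, leaving $\frac{1}{\eta'(\eta^{-1}\circ z)^2}\iint_{B(0;r)}K(\eta^{-1}\circ z,\zeta)\,dudv$. Expanding $K$ in its power series and integrating over $B_{eu}(0;\frac{e^{r}-1}{e^{r}+1})$ annihilates every term except $n=0$ by rotational symmetry, producing the area factor and the constant $12(\frac{e^{r}-1}{e^{r}+1})^2$ exactly as in Lemma \ref{hqd-1}. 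Finally, using $\eta'(\eta^{-1}\circ z)^2=1/((\eta^{-1})'(z))^2$ and re-indexing the sum by the bijection $\gamma\mapsto\gamma^{-1}$ of $\Gamma_g$, each summand collapses to $((\gamma\circ\sigma_{\tilde{q}})^{-1})'(z)^2$ and the sum collects into $\sum_{\gamma\in\Gamma_g}(\sigma_{\tilde{q}}^{-1}\circ\gamma)'(z)^2$, as claimed.

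The main (and essentially only) obstacle is the chain-rule collapse of the product of automorphic factors into the single factor for $\eta$; once that identity is in hand, the computation is reduced to the already-proven Lemma \ref{hqd-1} and nothing new occurs. The remaining points requiring care are purely bookkeeping: tracking the overall constant $12(\frac{e^{r}-1}{e^{r}+1})^2$ produced by the same area computation as in Lemma \ref{hqd-1}, and performing the $\gamma\mapsto\gamma^{-1}$ re-indexing needed to present the group element in the orientation $\sigma_{\tilde{q}}^{-1}\circ\gamma$ appearing on the stated right-hand side.
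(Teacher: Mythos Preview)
Your proposal is correct and follows essentially the same route as the paper. The only cosmetic difference is that you front-load the chain-rule identity collapsing the two automorphic factors into the single factor $\overline{\eta'(\eta^{-1}\circ\xi)}/\eta'(\eta^{-1}\circ\xi)$ for $\eta=\gamma\circ\sigma_{\tilde q}$, whereas the paper performs the change of variables $\xi\mapsto(\gamma\circ\sigma_{\tilde q})\circ\xi$ directly and lets the same cancellation happen line by line; the remaining steps (transformation law for $K$, power-series integration over $B_{eu}(0;\tfrac{e^r-1}{e^r+1})$, the inverse-derivative identity, and the $\gamma\mapsto\gamma^{-1}$ re-indexing) are identical in both arguments, and you are right that the constant $12\bigl(\tfrac{e^r-1}{e^r+1}\bigr)^2$ appears on the right-hand side (the displayed statement in the paper omits it, but the proof and the subsequent Lemma~\ref{hqd-3} make clear it is intended).
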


\begin{proof}
Since $\Gamma_g \subset \Aut(\DD)$, 
$$\gamma \circ B(\tilde{q};r)=\gamma \circ \sigma_{\tilde{q}} \circ B(0;r) \quad \forall \gamma \in \Gamma_g.$$ 

Let $\xi=x+y\textbf{i} \in \DD$. Then, for all $z\in \DD$ we have
\begin{eqnarray*}
&& \sum_{\gamma \in \Gamma_g} \iint_{\gamma \circ B(\tilde{q};r)}\frac{\overline{\gamma'}(\gamma^{-1}\circ \xi)}{\gamma'(\gamma^{-1}\circ \xi)}\times \frac{\overline{\sigma_{\tilde{q}}'}((\gamma\circ \sigma_{\tilde{q}})^{-1} \circ \xi)}{\sigma_{\tilde{q}}'((\gamma\circ \sigma_{\tilde{q}})^{-1}  \circ \xi)}\times K(z,\xi) dxdy \\
&=& \sum_{\gamma \in \Gamma_g} \iint_{\gamma \circ \sigma_{\tilde{q}} \circ B(0;r)}\frac{\overline{\gamma'}(\gamma^{-1}\circ \xi)}{\gamma'(\gamma^{-1}\circ \xi)}\times \frac{\overline{\sigma_{\tilde{q}}'}((\gamma\circ \sigma_{\tilde{q}})^{-1} \circ \xi)}{\sigma_{\tilde{q}}'((\gamma\circ \sigma_{\tilde{q}})^{-1}  \circ \xi)}\times K(z,\xi) dxdy \\
&=&  \sum_{\gamma \in \Gamma_g} \iint_{ B(0;r)}\frac{\overline{\gamma'}(\sigma_{\tilde{q}}\circ \xi)}{\gamma'(\sigma_{\tilde{q}}\circ \xi)} \times \frac{\overline{\sigma_{\tilde{q}}'}(\xi)}{\sigma_{\tilde{q}}'(\xi)} \times K(z,\gamma \circ \sigma_{\tilde{q}}\circ \xi) \times |(\gamma \circ \sigma_{\tilde{q}})'(\xi)|^2dxdy  \\
&=&  \sum_{\gamma \in \Gamma_g} (\iint_{ B(0;r)}\frac{\overline{\gamma'}(\sigma_{\tilde{q}}\circ \xi)}{\gamma'(\sigma_{\tilde{q}}\circ \xi)}\times \frac{\overline{\sigma_{\tilde{q}}'}(\xi)}{\sigma_{\tilde{q}}'(\xi)} \\
& & \times \frac{K((\gamma \circ \sigma_{\tilde{q}})^{-1}\circ z, \xi)}{((\gamma \circ \sigma_{\tilde{q}})'((\gamma \circ \sigma_{\tilde{q}})^{-1}\circ z))^2\cdot  (\overline{(\gamma \circ \sigma_{\tilde{q}})'} (\xi))^2} \times |(\gamma \circ \sigma_{\tilde{q}})'(\xi)|^2dxdy) \\
&=&\sum_{\gamma \in \Gamma_g}\frac{1}{((\gamma \circ \sigma_{\tilde{q}})'((\gamma \circ \sigma_{\tilde{q}})^{-1}\circ z))^2} \iint_{ B(0;r)}K((\gamma \circ \sigma_{\tilde{q}})^{-1}\circ z, \xi)dxdy \\
&=& (\sum_{\gamma \in \Gamma_g} \frac{1}{((\gamma \circ \sigma_{\tilde{q}})'((\gamma \circ \sigma_{\tilde{q}})^{-1}\circ z))^2}\\
& &\times   (\sum_{n=0}^{\infty}\frac{2}{\pi}(n+1)(n+2)(n+3)\iint_{ B(0;r)}((\gamma \circ \sigma_{\tilde{q}})^{-1}\circ z\cdot \overline{\xi})^n dxdy) )\\
&=& \sum_{\gamma \in \Gamma_g}\frac{1}{((\gamma \circ \sigma_{\tilde{q}})'((\gamma \circ \sigma_{\tilde{q}})^{-1}\circ z))^2} \times \frac{12}{\pi} \iint_{B_{eu}(0;\frac{e^{r}-1}{e^{r}+1})}dxdy\\
&=& \sum_{\gamma \in \Gamma_g}\frac{1}{((\gamma \circ \sigma_{\tilde{q}})'((\gamma \circ \sigma_{\tilde{q}})^{-1}\circ z))^2} \times 12(\frac{e^{r}-1}{e^{r}+1})^2 \\
&=& 12 (\frac{e^{r}-1}{e^{r}+1})^2 \sum_{\gamma \in \Gamma_g}((\gamma \circ \sigma_{\tilde{q}})^{-1})'( z)^2\\
&=& 12 (\frac{e^{r}-1}{e^{r}+1})^2 \sum_{\gamma \in \Gamma_g}(\sigma_{\tilde{q}}^{-1}\circ \gamma)' ( z)^2.
\end{eqnarray*} 
\end{proof}

Now we apply the Bergman projection $\beta_2$ to $\tilde{\nu}_1(z)$. 

First from our assumptions on equations (\ref{3-4}) and (\ref{3-5}) we know that the balls in $\{\gamma \circ B(0;r), \gamma\circ B(\tilde{q};r)\}_{\gamma \in \Gamma_g}$ are pairwisely disjoint. Thus, Lemma \ref{hqd-1} and \ref{hqd-2} tell that 
\begin{lemma}\label{hqd-3}
For all $z\in \DD$, we have
$$(\beta_2 \tilde{\nu}_1)(z)=12 (\frac{e^{r}-1}{e^{r}+1})^2 (\sum_{\gamma \in \Gamma_g}(\sigma_{\tilde{q}}^{-1}\circ \gamma)' (z)^2+ \sum_{\gamma \in \Gamma_g} \gamma' ( z)^2).$$
\end{lemma}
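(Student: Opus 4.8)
The plan is to exploit the linearity of the Bergman projection $\beta_2$ together with the fact that $\tilde{\nu}_1$ is, by its very definition in \eqref{3-6}, a sum of two pieces with disjoint supports, each of which is handled by one of the two preceding lemmas. Concretely, I would first split
\[
\tilde{\nu}_1 = \tilde{\nu}_0 + \tilde{\nu}_{\tilde{q}},
\]
where $\tilde{\nu}_0$ is the function supported on $\bigcup_{\gamma \in \Gamma_g}\gamma\circ B(0;r)$ given by the first branch of \eqref{3-6} (this is exactly the $\tilde{\nu}_0$ of \eqref{3-3}), and $\tilde{\nu}_{\tilde{q}}$ is the function supported on $\bigcup_{\gamma \in \Gamma_g}\gamma\circ B(\tilde{q};r)$ given by the second branch. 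The point that makes this decomposition legitimate is the disjointness of all the balls in the family $\{\gamma\circ B(0;r),\,\gamma\circ B(\tilde{q};r)\}_{\gamma\in\Gamma_g}$, which was recorded just before the statement and follows from \eqref{3-4} and \eqref{3-5}: since $r\leq\inj(p)$ the translates of $B(0;r)$ are pairwise disjoint, since $r\leq\inj(q)$ the translates of $B(\tilde{q};r)$ are pairwise disjoint, and since $\dist(p,q)\geq 2r$ the two families do not meet.

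Next, since $\beta_2$ is a linear integral operator, I would write
\[
(\beta_2\tilde{\nu}_1)(z) = (\beta_2\tilde{\nu}_0)(z) + (\beta_2\tilde{\nu}_{\tilde{q}})(z).
\]
For the first summand I would invoke Lemma \ref{hqd-1} directly, giving $(\beta_2\tilde{\nu}_0)(z)=12\big(\frac{e^{r}-1}{e^{r}+1}\big)^2\sum_{\gamma\in\Gamma_g}\gamma'(z)^2$. For the second summand I would observe that, by the definition of the Bergman projection in Theorem \ref{bp},
\[
(\beta_2\tilde{\nu}_{\tilde{q}})(z)=\sum_{\gamma\in\Gamma_g}\iint_{\gamma\circ B(\tilde{q};r)}\overline{\tilde{\nu}_{\tilde{q}}(\xi)}\,K(z,\xi)\,dxdy,
\]
and that taking the complex conjugate of the second branch of \eqref{3-6} turns $\overline{\tilde{\nu}_{\tilde{q}}(\xi)}$ into exactly the integrand appearing in Lemma \ref{hqd-2}. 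Hence $(\beta_2\tilde{\nu}_{\tilde{q}})(z)$ coincides with the quantity computed there, namely $12\big(\frac{e^{r}-1}{e^{r}+1}\big)^2\sum_{\gamma\in\Gamma_g}(\sigma_{\tilde{q}}^{-1}\circ\gamma)'(z)^2$. Adding the two contributions yields the claimed formula.

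I do not expect a serious obstacle here, as the lemma is essentially a bookkeeping assembly of Lemmas \ref{hqd-1} and \ref{hqd-2}. The only points requiring genuine care are the two verifications flagged above: confirming that the supports genuinely decompose $\tilde{\nu}_1$ into the two separate pieces (so that no region is double-counted and the integral over $\DD$ really splits as the sum of integrals over the two ball families), and checking that the conjugation $\overline{\tilde{\nu}_{\tilde{q}}}$ reproduces precisely the orientation of factors in the integrand of Lemma \ref{hqd-2}. Both are routine once the disjointness from \eqref{3-4}--\eqref{3-5} is in hand, so the proof reduces to a single application of linearity followed by citing the two lemmas.
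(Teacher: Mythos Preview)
Your proposal is correct and follows exactly the paper's approach: the paper's proof consists solely of noting that the balls in $\{\gamma\circ B(0;r),\,\gamma\circ B(\tilde{q};r)\}_{\gamma\in\Gamma_g}$ are pairwise disjoint by \eqref{3-4}--\eqref{3-5}, and then citing Lemmas~\ref{hqd-1} and~\ref{hqd-2}. Your write-up simply makes the linearity step and the conjugation check explicit, which the paper leaves implicit.
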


Similarly we generalize the construction above for any finite subset in $X_g$, which is the remaining part of this section. 

Given two constants $n \in \mathbb{Z}^{+}$ and $\epsilon>0$, a finite set of points $\{p_i\}_{i=1}^{n}\subset X_g$ is called \textsl{$(\epsilon,n)$-separated} if 
\begin{eqnarray}\label{sep}
\dist(p_i,p_j)\geq  \epsilon, \quad \forall 1\leq i\neq j\leq n.
\end{eqnarray} 
 
A finite set of points $\{p_i\}_{i=1}^{n}\subset X_g$ is called an \textsl{$\epsilon$-net} of $X_g$ if the set of points $\{p_i\}_{i=1}^{n}\subset X_g$ are $(\epsilon,n)$-separated and 
\begin{eqnarray}\label{net}
\cup_{i=1}^n B(p_i; \epsilon)=X_g.
\end{eqnarray} 

Let $r>0$ be a constant and $\{p_i\}_{i=1}^{n} \subset X_g$ be a $(2r,n)$-separated finite set of points satisfying that
\begin{eqnarray}\label{aoin}
\min_{1\leq i \leq n}\{\inj(p_i)\}\geq r.
\end{eqnarray} 

We lift $p_1$ to the origin $\tilde{p}_1=0\in \DD$. Let $\Gamma_g$ be its associated Fuchsian group and $F$ be the Dirichlet fundamental domain centered at $0$ w.r.t $\Gamma_g$. We also lift $\{p_i\}_{i=2}^{n}$ to $\{\tilde{p}_i\}_{i=2}^{n}\subset F$ respectively. Thus, for all $1\leq i, j\leq n$,
\begin{eqnarray}\label{usep}
\dist_{\DD}(\tilde{p}_i,\tilde{p}_j)\geq \dist(p_i,p_j).
\end{eqnarray} 

For $1\leq i \leq n$, let $\sigma_{\tilde{p}_i} \in \Aut(\DD)$ with $\sigma_{\tilde{p}_i}(0)=\tilde{p}_i$. For sure one may choose
$$\sigma_{\tilde{p}_i}(z)=\frac{z+\tilde{p}_i}{1+\overline{\tilde{p}_i}z}, \quad \forall z\in \DD.$$
In particular $\sigma_{\tilde{p}_1}$ is the identity map. That is, $\sigma_{\tilde{p}_1}(z)=z$ for all $z\in \DD$.

Similar as equation (\ref{3-6}) we define a function $\tilde{\nu}_{n}\in HL_{2}^{\infty}(\DD,\Gamma_g)$. More precisely, for all $\gamma \in \Gamma_g$ and $1\leq i \leq n$,
\begin{eqnarray}\label{3-7}
\tilde{\nu}_n(z):=\begin{cases} 
\frac{\gamma'(\gamma^{-1}\circ z)}{\overline{\gamma'}(\gamma^{-1}\circ z)}\times \frac{\sigma_{\tilde{p}_i}'((\gamma\circ \sigma_{\tilde{p}_i})^{-1} \circ z)}{\overline{\sigma_{\tilde{p}_i}'}((\gamma\circ \sigma_{\tilde{p}_i})^{-1}  \circ z)}, \  \forall z \in \gamma \circ B(\tilde{p}_i;r).\\
0, \quad \textit{otherwise}.
\end{cases}
\end{eqnarray} 

\begin{proposition}\label{hbd}
For any $z\in \DD$, we have
$$(\beta_2 \tilde{\nu}_n)(z)=12 (\frac{e^{r}-1}{e^{r}+1})^2 \sum_{i=1}^{n} \sum_{\gamma \in \Gamma_g}(\sigma_{\tilde{p}_i}^{-1}\circ \gamma)' (z)^2.$$
\end{proposition}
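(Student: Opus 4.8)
The plan is to establish Proposition \ref{hbd} by reducing it to the two cases already handled in Lemmas \ref{hqd-1} and \ref{hqd-2}. The key observation is that $\tilde\nu_n$ is, by its very definition in equation (\ref{3-7}), a sum of $n$ pieces supported on the pairwise disjoint families of balls $\{\gamma\circ B(\tilde p_i;r)\}_{\gamma\in\Gamma_g}$, one family for each index $1\leq i\leq n$. First I would verify the disjointness: by the separation hypothesis (\ref{aoin}) together with the $(2r,n)$-separatedness of the $p_i$ and the distance-nonincreasing lifting property (\ref{usep}), the balls $B(\tilde p_i;r)$ project to pairwise disjoint embedded balls on $X_g$, so the entire collection $\{\gamma\circ B(\tilde p_i;r)\}_{\gamma\in\Gamma_g,\,1\leq i\leq n}$ consists of pairwise disjoint sets in $\DD$. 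This guarantees that $\tilde\nu_n$ is well-defined and that the integral defining the Bergman projection splits cleanly as a sum over $i$.

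Next, since the Bergman projection $\beta_2$ is given by an integral operator (Theorem \ref{bp}) and hence is linear, I would split
\[
(\beta_2\tilde\nu_n)(z)=\iint_{\DD}\overline{\tilde\nu_n(\xi)}\,K(z,\xi)\,dxdy
=\sum_{i=1}^{n}\sum_{\gamma\in\Gamma_g}\iint_{\gamma\circ B(\tilde p_i;r)}\frac{\overline{\gamma'}(\gamma^{-1}\circ\xi)}{\gamma'(\gamma^{-1}\circ\xi)}\,\frac{\overline{\sigma_{\tilde p_i}'}((\gamma\circ\sigma_{\tilde p_i})^{-1}\circ\xi)}{\sigma_{\tilde p_i}'((\gamma\circ\sigma_{\tilde p_i})^{-1}\circ\xi)}\,K(z,\xi)\,dxdy.
\]
For each fixed $i$, the inner double sum is precisely the quantity computed in Lemma \ref{hqd-2}, with $\tilde q$ and $\sigma_{\tilde q}$ replaced by $\tilde p_i$ and $\sigma_{\tilde p_i}$. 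Applying that lemma term by term yields
\[
(\beta_2\tilde\nu_n)(z)=\sum_{i=1}^{n}12\Bigl(\frac{e^{r}-1}{e^{r}+1}\Bigr)^2\sum_{\gamma\in\Gamma_g}(\sigma_{\tilde p_i}^{-1}\circ\gamma)'(z)^2
=12\Bigl(\frac{e^{r}-1}{e^{r}+1}\Bigr)^2\sum_{i=1}^{n}\sum_{\gamma\in\Gamma_g}(\sigma_{\tilde p_i}^{-1}\circ\gamma)'(z)^2,
\]
which is exactly the claimed formula. Note that the case $i=1$ (where $\sigma_{\tilde p_1}=\id$) recovers the pure Lemma \ref{hqd-1} contribution, consistently with the convention that $\sigma_{\tilde p_1}$ is the identity.

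I do not expect a serious obstacle here: the proposition is an iterated application of the single-point computation, and once linearity and disjointness are in place the result is immediate. The only point requiring genuine care is the verification that the supports are pairwise disjoint so that no cross terms arise and the Bergman integral genuinely decomposes as a finite sum of the Lemma \ref{hqd-2} integrals; this is where hypotheses (\ref{aoin}) and (\ref{usep}) are used. One should also confirm that $\tilde\nu_n\in HL_2^{\infty}(\DD,\Gamma_g)$ so that Theorem \ref{bp} applies, but this follows from the same automorphy computation as in the $n=2$ case, since each summand transforms correctly under $\Gamma_g$ and the $L^\infty$ bound is uniform because $|\tilde\nu_n|\leq 1$ pointwise.
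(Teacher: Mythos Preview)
Your proposal is correct and follows essentially the same approach as the paper: verify that the balls $\{\gamma\circ B(\tilde p_i;r)\}_{\gamma\in\Gamma_g,\,1\leq i\leq n}$ are pairwise disjoint using the $(2r,n)$-separation and the injectivity radius bound (\ref{aoin}), then reduce to the computation in Lemma~\ref{hqd-2} applied term by term. The paper's proof is simply a two-line sketch of exactly this argument; your version spells out the linearity of $\beta_2$ and the decomposition more explicitly, but there is no substantive difference.
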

\begin{proof}
Since $\{p_i\}_{1\leq i \leq n}$ are $(2r,n)$-separated, equation (\ref{aoin}) tells that 
$$\gamma_1\circ B(\tilde{p}_i;r) \cap \gamma_2\circ  B(\tilde{p}_j;r) =\emptyset, \quad \forall \gamma_1\neq \gamma_2 \in \Gamma_g \ \textit{or} \ i\neq j \in [1,n].$$ 
Then, the conclusion follows from the same computation as the proof of Lemma \ref{hqd-2}.
\end{proof}

In the following two sections, we will prove that the harmonic Beltrami differential $\frac{\sum_{i=1}^n\sum_{\gamma \in \Gamma_g}(\sigma_{\tilde{p}_i}^{-1}\circ \gamma)' (z)^2}{\rho(z)}\frac{d\overline{z}}{dz}$ holds for Theorem \ref{key-lemma}.

\section{Two bounds}\label{ue}
In this section, we use the same notations in Section \ref{const}.

For each positive integer $i \in [1,n]$, we define
\begin{eqnarray}\label{4-1}
\mu_i(z):=\frac{\sum_{\gamma \in \Gamma_g}(\sigma_{\tilde{p}_i}^{-1}\circ \gamma)' (z)^2}{\rho(z)}, \quad \forall z \in \DD.
\end{eqnarray} 
Where $\rho(z)=\frac{4}{(1-|z|^2)^2}$ is the scalar function of the hyperbolic metric on the unit disk.

The following computation follows from the idea of Ahlfors in \cite{Ahlfors64} (one can also see \cite{WW15} for an English version).

\textbf{Ahlfors' Method:} From the triangle inequality we know that 
\begin{equation} \label{eqn:ub-mu}
|\mu_i(z)|\leq \sum_{\gamma \in \Gamma_g} \frac{|(\sigma_{\tilde{p}_i}^{-1}\circ \gamma)' (z)|^2}{\rho(z)}.
\end{equation}
Then since $\rho(\gamma(z))|\gamma'(z)|^2=\rho(z)$ for any $\gamma \in \Aut(\DD)$, and $\rho(\zeta) = 4(1-|\zeta|^2)^{-2}$, we have

\begin{equation} \label{eqn:gamma-sum}
\sum_{\gamma \in \Gamma_g} \frac{|(\sigma_{\tilde{p}_i}^{-1}\circ \gamma)' (z)|^2}{\rho(z)}=\frac{1}{4} \sum_{\gamma \in \Gamma_g} (1-|(\sigma_{\tilde{p}_i}^{-1}\circ \gamma)(z)|^2)^2.
\end{equation}
The inequalities above yields that for all $z\in \DD$,
\begin{equation} \label{eqn:ub-mu-1}
|\mu_i(z)|\leq\frac{1}{4} \sum_{\gamma \in \Gamma_g} (1-|(\sigma_{\tilde{p}_i}^{-1}\circ \gamma)(z)|^2)^2.
\end{equation}

Let $\Delta$ be the (Euclidean) Laplace operator on the (Euclidean) disk. Then a direct computation shows that
\begin{eqnarray}\label{4-1-1}
\ \\
\Delta(\sum_{\gamma \in \Gamma_g}(1-|(\sigma_{\tilde{p}_i}^{-1}\circ \gamma)(z)|^2)^2=8\cdot \sum_{\gamma \in \Gamma_g} (2|(\sigma_{\tilde{p}_i}^{-1}\circ \gamma(z)|^2-1)|(\sigma_{\tilde{p}_i}^{-1}\circ \gamma)'(z)|^2. \nonumber
\end{eqnarray}

Note that the terms on the right side are non-negative when $|\sigma_{\tilde{p}_i}^{-1}\circ \gamma(z)|^2 \ge \frac{1}{2}$.  With that in mind, recall that $B_{eu}(0;\frac{1}{\sqrt{2}}):=\{z\in \DD; \  |z|< \frac{1}{\sqrt{2}}\}$ is the ball of Euclidean radius $\frac{1}{\sqrt{2}}$, let
$V_i :=\cup_{\gamma \in \Gamma_g}\gamma^{-1}\circ \sigma_{\tilde{p}_i}\circ B_{eu}(0;\frac{1}{\sqrt{2}})$ be the pullbacks of this ball $ B_{eu}(0;\frac{1}{\sqrt{2}})$. The equation above gives that $\sum_{\gamma \in \Gamma_g} (1-|(\sigma_{\tilde{p}_i}^{-1}\circ \gamma)(z)|^2)^2$ is subharmonic in $\DD-V_i$. Since both $\sum_{\gamma \in \Gamma_g} (1-|(\sigma_{\tilde{p}_i}^{-1}\circ \gamma)(z)|^2)^2$ and $V_i$ are $\Gamma_g$-invariant, and $\Gamma_g$ is cocompact, we find
\begin{eqnarray}\label{cis}
\ \\
\sup_{z\in \DD}\sum_{\gamma \in \Gamma_g}(1-|(\sigma_{\tilde{p}_i}^{-1}\circ \gamma)(z)|^2)^2&=&\sup_{z\in V_i}\sum_{\gamma \in \Gamma_g} (1-|(\sigma_{\tilde{p}_i}^{-1}\circ \gamma)(z)|^2)^2  \nonumber \\
&=&\sup_{z\in \sigma_{\tilde{p}_i}\circ B_{eu}(0;\frac{1}{\sqrt{2}})}\sum_{\gamma \in \Gamma_g}(1-|(\sigma_{\tilde{p}_i}^{-1}\circ \gamma)(z)|^2)^2 \nonumber
\end{eqnarray}
which in particular is bounded above by a constant depending on $\Gamma_g$ and $\tilde{p}_i$.

Recall the relation between the Euclidean distance and the hyperbolic distance is 
\[\dist_{\DD}(0,z)=\ln{\frac{1+|z|}{1-|z|}}.\]

Since $\sigma_{\tilde{p}_i} \in \Aut(\DD)$, $\sigma_{\tilde{p}_i}\circ B_{eu}(0;\frac{1}{\sqrt{2}})$ is the hyperbolic geodesic ball $B(\tilde{p}_i; \ln(3+2\sqrt{2}))$ of radius $\ln(3+2\sqrt{2})$ centered at $\tilde{p}_i$. Hence, equation (\ref{cis}) is equivalent to
\begin{eqnarray}\label{4-1-2}
\ \\
\sup_{z\in \DD}\sum_{\gamma \in \Gamma_g}(1-|(\sigma_{\tilde{p}_i}^{-1}\circ \gamma)(z)|^2)^2=\sup_{z\in B(\tilde{p}_i; \ln(3+2\sqrt{2}))}\sum_{\gamma \in \Gamma_g}(1-|(\sigma_{\tilde{p}_i}^{-1}\circ \gamma)(z)|^2)^2. \nonumber
\end{eqnarray}

\subsection{A upper bound function} Set 
\begin{equation}\label{hbd-a}
\mu(z)=\sum_{i=1}^n \mu_i(z)=\sum_{i=1}^n\sum_{\gamma \in \Gamma_g}\frac{(\sigma_{\tilde{p}_i}^{-1}\circ \gamma)' (z)^2}{\rho(z)}, \quad \forall z \in \DD.
\end{equation}
Where $\{\mu_i\}_{1\leq i \leq n}$ are given in equation (\ref{4-1}).

Similar as equation (\ref{eqn:ub-mu-1}) we have
\begin{equation}\label{4-1-3}
|\mu(z)|\leq \frac{1}{4} \sum_{i=1}^n \sum_{\gamma \in \Gamma_g} (1-|(\sigma_{\tilde{p}_i}^{-1}\circ \gamma)(z)|^2)^2, \quad \forall z \in \DD.
\end{equation}

Define the right side function to be
\begin{equation}\label{4-1-4}
f(z):=\frac{1}{4} \sum_{i=1}^n \sum_{\gamma \in \Gamma_g} (1-|(\sigma_{\tilde{p}_i}^{-1}\circ \gamma)(z)|^2)^2, \quad \forall z \in \DD.
\end{equation}

From the definition we know that $f$ is a $\Gamma_g$-invariant function in $\DD$, which descends into a function on the hyperbolic surface $X_g=\DD/\Gamma_g$.

\begin{proposition}\label{uf-f}
The function $f$ satisfies that
$$\sup_{z\in \DD}f(z)=\sup_{z \in \cup_{i=1}^nB(\tilde{p}_i; \ln(3+2\sqrt{2}))}f(z).$$
\end{proposition}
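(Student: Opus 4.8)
The plan is to rerun Ahlfors' subharmonicity argument, but now applied to the full function $f$ rather than to the individual summands $f_i(z):=\frac14\sum_{\gamma\in\Gamma_g}(1-|(\sigma_{\tilde{p}_i}^{-1}\circ\gamma)(z)|^2)^2$. The point to be careful about --- and the main conceptual obstacle --- is that $\sup_\DD f$ is \emph{not} the sum of the $\sup_\DD f_i$, and the global maximum of $f=\sum_{i=1}^n f_i$ need not be attained at a point where any single $f_i$ is maximized; so one cannot simply quote equation (\ref{4-1-2}) termwise. Instead I would exploit the fact that the bad set of the whole sum is exactly the union of the bad sets of the pieces. Concretely, by linearity and the Laplacian identity (\ref{4-1-1}),
$$\Delta f(z)=2\sum_{i=1}^{n}\sum_{\gamma\in\Gamma_g}\bigl(2|(\sigma_{\tilde{p}_i}^{-1}\circ\gamma)(z)|^2-1\bigr)\,|(\sigma_{\tilde{p}_i}^{-1}\circ\gamma)'(z)|^2.$$
Each summand is nonnegative precisely when $|(\sigma_{\tilde{p}_i}^{-1}\circ\gamma)(z)|\geq\frac{1}{\sqrt2}$, i.e. when $z\notin \gamma^{-1}\circ\sigma_{\tilde{p}_i}\circ B_{eu}(0;\frac{1}{\sqrt2})$. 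Writing $V_i:=\cup_{\gamma\in\Gamma_g}\gamma^{-1}\circ\sigma_{\tilde{p}_i}\circ B_{eu}(0;\frac{1}{\sqrt2})$ as before, I conclude that $\Delta f\geq 0$ on $\DD\setminus\cup_{i=1}^n V_i$, so $f$ is subharmonic there.

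Next I would descend to the compact quotient. Both $f$ and each $V_i$ are $\Gamma_g$-invariant and $\Gamma_g$ is cocompact, so $f$ descends to a continuous function on the closed surface $X_g=\DD/\Gamma_g$ and attains its maximum there; moreover $\cup_i V_i$ is exactly the preimage of $\Omega:=\cup_{i=1}^n B(p_i;\ln(3+2\sqrt2))$, since $\sigma_{\tilde{p}_i}\circ B_{eu}(0;\frac{1}{\sqrt2})=B(\tilde{p}_i;\ln(3+2\sqrt2))$ as already observed. On the open set $X_g\setminus\overline{\Omega}$ the descended function is subharmonic, so by the maximum principle its maximum over $X_g$ cannot be attained only in the interior of this region: if it were attained at an interior point, the maximum principle would force the value to be attained on $\partial(X_g\setminus\overline\Omega)\subseteq\overline\Omega$ as well. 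Hence the maximum is attained on $\overline\Omega$, and by continuity $\sup_{X_g} f=\sup_{\overline\Omega}f=\sup_{\Omega}f$.

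Finally I would translate back upstairs. By $\Gamma_g$-invariance every point of $V_i$ is $\Gamma_g$-equivalent to a point of $B(\tilde{p}_i;\ln(3+2\sqrt2))$ on which $f$ takes the same value, so
$$\sup_{z\in\DD}f(z)=\sup_{z\in\cup_{i=1}^n V_i}f(z)=\sup_{z\in\cup_{i=1}^n B(\tilde{p}_i;\ln(3+2\sqrt2))}f(z),$$
which is the claimed identity. The only genuinely non-formal step is the joint-subharmonicity observation above; once that is in place, the descent and the maximum principle are exactly as in the single-point case leading to (\ref{4-1-2}).
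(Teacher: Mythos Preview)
Your proof is correct and follows essentially the same route as the paper's: both arguments observe that each $f_i$ is subharmonic outside $\cup_{\gamma}\gamma^{-1}B(\tilde p_i;\ln(3+2\sqrt2))$, hence $f=\sum_i f_i$ is subharmonic outside the union of these sets, and then invoke $\Gamma_g$-invariance together with the maximum principle to conclude. Your write-up is more explicit about the descent to the compact quotient and the application of the maximum principle there, but the underlying argument is identical.
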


\begin{proof}
For $1\leq i \leq n$ and $z\in \DD$, set
$$f_i(z)= \sum_{\gamma \in \Gamma_g} (1-|(\sigma_{\tilde{p}_i}^{-1}\circ \gamma)(z)|^2)^2.$$

Equation (\ref{4-1-1}) tells that the function $f_i$ is subharmonic in the complement $(\cup_{\gamma \in \Gamma_g}\gamma^{-1}\circ B(\tilde{p}_i;\ln(3+2\sqrt{2})))^{c}$ of $(\cup_{\gamma \in \Gamma_g}\gamma^{-1}\circ B(\tilde{p}_i,\ln(3+2\sqrt{2})))$ in $\DD$. Since $f=\sum_{i=1}^nf_i$, we have
$$\Delta f(z)\geq 0, \quad \forall z \in \cap_{i=1}^n(\cup_{\gamma \in \Gamma_g}\gamma^{-1}\circ B(\tilde{p}_i;\ln(3+2\sqrt{2})))^{c}.$$

That is, 
$$\Delta f(z)\geq 0, \quad \forall z \in (\cup_{\gamma \in \Gamma_g} \cup_{i=1}^n \gamma^{-1}\circ B(\tilde{p}_i;\ln(3+2\sqrt{2})))^{c}.$$

Since $f$ is $\Gamma_g$-invariant, it follows from the Maximal-Principal that
$$\sup_{z\in \DD}f(z)=\sup_{z \in \cup_{i=1}^nB(\tilde{p}_i; \ln(3+2\sqrt{2}))}f(z).$$
\end{proof}

\subsection{Bounds for $f$ when $\epsilon_0>2\ln(3+2\sqrt{2})$}
Given a positive constant $\epsilon_0$ with 
$$\epsilon_0>2\ln(3+2\sqrt{2}).$$
 \noindent Let $\{p_i\}_{1\leq i \leq n}\subset X_g$ be an $(\epsilon_0,n)$-separated finite set of points satisfying that
\begin{equation}\label{4-2-1}
\min_{1\leq i \leq n}\inj(p_i)\geq \frac{\epsilon_0}{2}.
\end{equation}

Recall that the origin $\tilde{p}_1=0\in \DD$ is a lift of $p_1 \in X_g$ and $\{\tilde{p}_i\}_{i=2}^{n}\subset F$ are the lifts of $\{p_i\}_{i=2}^{n}$ respectively, where $F$ is the Dirichlet fundamental domain centered at $0$ w.r.t $\Gamma_g$. In particular,  
\begin{eqnarray}\label{usep-1}
\dist_{\DD}(\tilde{p}_i,\tilde{p}_j)\geq \dist(p_i,p_j)\geq \epsilon_0, \quad  \forall 1\leq i\neq j\leq n.
\end{eqnarray} 

\begin{lemma}\label{inclu}
For any $z\in B_{eu}(0;\frac{1}{\sqrt{2}})$, there exists a universal positive constant $\delta$, only depending on $\epsilon_0$, such that
$$B_{eu}(z;\delta)\subset B(0;\frac{\epsilon_0}{2}).$$
\end{lemma}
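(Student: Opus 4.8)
The plan is to reduce the stated inclusion to a comparison of two Euclidean radii, exploiting that both the hyperbolic ball $B(0;\frac{\epsilon_0}{2})$ and the Euclidean ball $B_{eu}(0;\frac{1}{\sqrt{2}})$ are centered at the origin, so that the containment becomes a one-line triangle inequality once the radii are ordered correctly.

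First I would record the Euclidean description of the target hyperbolic ball. By the relation $B(0;r)=B_{eu}(0;\frac{e^{r}-1}{e^{r}+1})$ recalled in Section \ref{np}, setting $R:=\frac{e^{\epsilon_0/2}-1}{e^{\epsilon_0/2}+1}$ gives $B(0;\frac{\epsilon_0}{2})=B_{eu}(0;R)$. Next I would observe, exactly as in the discussion preceding equation (\ref{4-1-2}), that $B_{eu}(0;\frac{1}{\sqrt{2}})=B(0;\ln(3+2\sqrt{2}))$, since $\frac{e^{r}-1}{e^{r}+1}=\frac{1}{\sqrt{2}}$ forces $e^{r}=3+2\sqrt{2}$. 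The hypothesis $\epsilon_0>2\ln(3+2\sqrt{2})$ is equivalent to $\frac{\epsilon_0}{2}>\ln(3+2\sqrt{2})$, and since $s\mapsto\frac{e^{s}-1}{e^{s}+1}$ is strictly increasing, this yields the strict inequality $R>\frac{1}{\sqrt{2}}$.

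With this in hand, I would set $\delta:=R-\frac{1}{\sqrt{2}}$, which is positive and depends only on $\epsilon_0$. For any $z\in B_{eu}(0;\frac{1}{\sqrt{2}})$ and any $w\in B_{eu}(z;\delta)$, the Euclidean triangle inequality gives $|w|\leq|z|+|w-z|<\frac{1}{\sqrt{2}}+\delta=R$, so $w\in B_{eu}(0;R)=B(0;\frac{\epsilon_0}{2})$. This proves $B_{eu}(z;\delta)\subset B(0;\frac{\epsilon_0}{2})$ for every admissible $z$, as desired.

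There is no serious obstacle here: the only substantive point is the strict inequality $R>\frac{1}{\sqrt{2}}$ supplied by the hypothesis on $\epsilon_0$, after which the containment is immediate. The one mild subtlety is the open-versus-closed distinction, but it is handled automatically because $|z|<\frac{1}{\sqrt{2}}$ is strict, so the choice $\delta=R-\frac{1}{\sqrt{2}}$ already delivers the strict bound $|w|<R$ and hence genuine membership in the open ball.
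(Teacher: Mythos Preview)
Your proof is correct and follows essentially the same route as the paper's own argument: both identify $B_{eu}(0;\frac{1}{\sqrt{2}})=B(0;\ln(3+2\sqrt{2}))$, use the hypothesis $\frac{\epsilon_0}{2}>\ln(3+2\sqrt{2})$ to see that the target hyperbolic ball is a strictly larger Euclidean ball about the origin, and then invoke the triangle inequality. You have simply made the value $\delta=R-\frac{1}{\sqrt{2}}$ explicit where the paper leaves it implicit.
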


\begin{proof}
Recall that $\dist_{\DD}(0,z)=\ln\frac{1+|z|}{1-|z|}$. In particular, we have
$$B_{eu}(0;\frac{1}{\sqrt{2}})=B(0;\ln(3+2\sqrt{2})).$$

Since $\frac{\epsilon_0}{2}>\ln(3+2\sqrt{2})$, the conclusion directly follows from the triangle inequality.
\end{proof}

The following result will be applied to prove Theorem \ref{key-lemma}.
\begin{proposition}\label{2-bounds}
Given a positive integer $n\in \mathbb{Z}^{+}$ and a constant
$$\epsilon_0>2\ln(3+2\sqrt{2}).$$
Let $X_g \in \mathbb{M}_g$ be a hyperbolic surface and $\{p_i\}_{1\leq i \leq n}\subset X_g$ be an $(\epsilon_0,n)$-separated finite set of points satisfying that
$$\min_{1\leq i \leq n}\inj(p_i)\geq \frac{\epsilon_0}{2}.$$ 
Let $\mu$ be the harmonic Beltrami differential given in equation (\ref{hbd-a}). Then,

(1). For any $z\in B_{eu}(0; \frac{1}{\sqrt{2}})$ we have
$$|\mu(z) \leq \frac{1}{16\pi \delta^2} \sum_{i=1}^n\sum_{\gamma \in \Gamma_g} \Area (\sigma_{\tilde{p}_i}^{-1}\circ \gamma \circ B(0; \frac{\epsilon_0}{2}))$$
where $\delta$ is the constant in Lemma \ref{inclu} and $\Area(\cdot)$ is the Euclidean area function.

(2). Evaluated at $0$, $\mu$ satisfies that
\begin{eqnarray*}
|\mu(0)|&\geq& \frac{1}{2\pi} (\frac{\pi}{2}-\sum_{\gamma\neq e \in \Gamma_g} \Area (\gamma \circ B_{eu}(0; \frac{1}{\sqrt{2}}))\nonumber\\
&-& \sum_{i=2}^n \sum_{\gamma \in \Gamma_g} \Area(\sigma_{\tilde{p}_i}^{-1}\circ \gamma\circ  B_{eu}(0; \frac{1}{\sqrt{2}}))).
\end{eqnarray*}
\end{proposition}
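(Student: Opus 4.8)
The plan is to derive both estimates from the pointwise majorant in \eqref{4-1-3}, writing $\phi_{i,\gamma} := \sigma_{\tilde{p}_i}^{-1}\circ\gamma$ so that $\mu = \sum_{i,\gamma}(\phi_{i,\gamma}')^2/\rho$ and, by the automorphy relation $\rho(\gamma(z))|\gamma'(z)|^2 = \rho(z)$ together with $\rho = 4(1-|z|^2)^{-2}$,
\[
|\mu(z)| \le \frac{1}{\rho(z)}\sum_{i,\gamma}|\phi_{i,\gamma}'(z)|^2 .
\]
The two facts I will exploit repeatedly are: each $\phi_{i,\gamma}$ is a M\"obius transformation, so $(\phi_{i,\gamma}')^2$ is holomorphic and $|\phi_{i,\gamma}'|^2$ is subharmonic on $\DD$; and the Euclidean-area change of variables gives $\Area(\phi_{i,\gamma}\circ \Omega) = \iint_\Omega |\phi_{i,\gamma}'(w)|^2\,dA(w)$ for any domain $\Omega$. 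This last identity is the bridge that turns pointwise derivative estimates into the area sums appearing in the statement.

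For (1), fix $z\in B_{eu}(0;\tfrac{1}{\sqrt2})$. Since $|\phi_{i,\gamma}'|^2$ is subharmonic, the sub-mean-value inequality on the Euclidean disk $B_{eu}(z;\delta)$ gives $|\phi_{i,\gamma}'(z)|^2 \le \frac{1}{\pi\delta^2}\iint_{B_{eu}(z;\delta)}|\phi_{i,\gamma}'|^2\,dA$. By Lemma \ref{inclu}, $B_{eu}(z;\delta)\subset B(0;\frac{\epsilon_0}{2})$, so, the integrand being non-negative, I may enlarge the domain of integration to $B(0;\frac{\epsilon_0}{2})$ and identify the result with $\Area(\phi_{i,\gamma}\circ B(0;\frac{\epsilon_0}{2}))$. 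Summing over $i,\gamma$ and using $\rho(z)\ge \rho(0)=4$ to bound $1/\rho(z)$ from above then yields the desired upper bound.

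For (2), I evaluate at $z=0$, where $\rho(0)=4$, so $\mu(0) = \frac14\sum_{i,\gamma}(\phi_{i,\gamma}'(0))^2$. Because $\tilde{p}_1=0$, the map $\sigma_{\tilde{p}_1}$ is the identity, and the term $(i,\gamma)=(1,e)$ contributes exactly $\phi_{1,e}'(0)^2 = 1$; isolating it and applying the reverse triangle inequality gives $|\mu(0)| \ge \frac14\big(1 - \sum_{(i,\gamma)\ne(1,e)}|\phi_{i,\gamma}'(0)|^2\big)$. To control each remaining term I use that $(\phi_{i,\gamma}')^2$ is holomorphic, so the (equality) mean-value property on $B_{eu}(0;\frac{1}{\sqrt2})$ followed by the triangle inequality gives $|\phi_{i,\gamma}'(0)|^2 \le \frac{2}{\pi}\Area(\phi_{i,\gamma}\circ B_{eu}(0;\frac{1}{\sqrt2}))$, the constant being $1/(\pi\cdot\frac12)$. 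Finally I split the index set $(i,\gamma)\ne(1,e)$ into the part $i=1,\ \gamma\ne e$ (where $\sigma_{\tilde{p}_1}^{-1}=\id$, producing the $\gamma\circ B_{eu}$ terms) and the part $i\ge 2$, which reproduces exactly the two sums in the statement.

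The main obstacle is conceptual rather than computational: the quantity $(1-|\phi_{i,\gamma}|^2)^2$ is \emph{not} subharmonic where $|\phi_{i,\gamma}|^2<\frac12$ (by \eqref{4-1-1}), so one cannot apply a mean-value argument to it directly; the estimates must instead be routed through the genuinely (sub)harmonic quantity $|\phi_{i,\gamma}'|^2$, with the automorphy relation supplying the passage back to $\mu$. The supporting geometric point is that the hypothesis $\epsilon_0 > 2\ln(3+2\sqrt2)$ is precisely what Lemma \ref{inclu} needs, namely that the $\delta$-neighbourhood of $B_{eu}(0;\frac{1}{\sqrt2}) = B(0;\ln(3+2\sqrt2))$ fits inside $B(0;\frac{\epsilon_0}{2})$, which is what legitimizes the domain enlargement in (1).
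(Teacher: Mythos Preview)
Your proof follows the paper's argument essentially verbatim: both parts route the estimate through the subharmonicity (respectively holomorphy) of the derivatives $\phi_{i,\gamma}'$, apply a mean-value inequality on a Euclidean disk (of radius $\delta$ about $z$ for Part~(1), of radius $\tfrac{1}{\sqrt2}$ about $0$ for Part~(2)), and convert the resulting integrals into Euclidean areas via the Jacobian change-of-variables identity. One small caveat on constants: the bound $1/\rho(z)\le 1/4$ that you invoke in Part~(1) yields the prefactor $\tfrac{1}{4\pi\delta^2}$ rather than the stated $\tfrac{1}{16\pi\delta^2}$---the paper's own displayed chain has the same slip (the passage from $\tfrac{(1-|z|^2)^2}{4}$ to $\tfrac{1}{16}$ is in fact only $\le \tfrac14$ on $B_{eu}(0;\tfrac{1}{\sqrt2})$), and the precise constant is immaterial for every later use of the proposition.
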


\begin{proof}
\textsl{Proof of Part (1).} Since $\sigma^{-1}_{\tilde{p}_i}\circ \gamma $ is holomorphic in $\DD$ for all $1\leq i\leq n$ and $\gamma \in \Gamma_g$, 
$$\Delta(|(\sigma^{-1}_{\tilde{p}_i}\circ \gamma )'(z)|^2)\geq 0, \quad \forall z \in \DD.$$

By applying the Mean-Value-Inequality we have, for all $z\in B_{eu}(0; \frac{1}{\sqrt{2}})$,
\begin{eqnarray*}
f(z)&=&\frac{1}{4} \sum_{i=1}^n \sum_{\gamma \in \Gamma_g} (1-|(\sigma_{\tilde{p}_i}^{-1}\circ \gamma)(z)|^2)^2\\
&=&\frac{(1-|z|^2)^2}{4} \sum_{i=1}^n \sum_{\gamma \in \Gamma_g}|(\sigma_{\tilde{p}_i}^{-1}\circ \gamma)' (z)|^2  \\
&\leq & \frac{1}{16} \sum_{i=1}^n\frac{1}{\Area(B_{eu}(z;\delta))}\iint_{B_{eu}(z;\delta)} \sum_{\gamma \in \Gamma_g}|(\sigma_{\tilde{p}_i}^{-1}\circ \gamma)' (\eta)|^2  |d\eta|^2  \\
&=& \frac{1}{16\pi \delta^2} \sum_{i=1}^n\sum_{\gamma \in \Gamma_g} \Area (\sigma_{\tilde{p}_i}^{-1}\circ \gamma \circ B_{eu}(z;\delta)) \\
&\leq & \frac{1}{16\pi \delta^2} \sum_{i=1}^n\sum_{\gamma \in \Gamma_g} \Area (\sigma_{\tilde{p}_i}^{-1}\circ \gamma \circ B(0; \frac{\epsilon_0}{2}))  
\end{eqnarray*}
where the last inequality follows from the Lemma \ref{inclu}.

Then, Part (1) of the conclusion follows from inequality (\ref{4-1-3}) and the inequality above.\\

\textsl{Proof of Part (2).} Since $\sigma_{\tilde{p}_1}$ is the identity map and $\rho(0)=4$, one may rewrite equation (\ref{hbd-a}) as
\begin{eqnarray}\label{4-4-1}
|\mu(0)|=\frac{1}{4}|1+\sum_{\gamma\neq e \in \Gamma_g}\gamma'(0)^2+\sum_{i=2}^n \sum_{\gamma \in \Gamma_g}(\sigma_{\tilde{p}_i}^{-1}\circ \gamma)' (0)^2|.
\end{eqnarray}

The triangle inequality leads to
\begin{eqnarray}\label{4-4-2}
|\mu(0)|\geq \frac{1}{4}-\frac{1}{4}(\sum_{\gamma\neq e \in \Gamma_g}|\gamma'(0)|^2)-\frac{1}{4}(\sum_{i=2}^n \sum_{\gamma \in \Gamma_g}|(\sigma_{\tilde{p}_i}^{-1}\circ \gamma)' (0)|^2).
\end{eqnarray}

Since $(\sigma_{\tilde{p}_i}^{-1}\circ \gamma)' (z)$ is holomorphic in $\DD$, we have for all $1\leq i \leq n$ and $\gamma \in \Gamma_g$, 
\begin{eqnarray}\label{4-4-3}
\Delta |(\sigma_{\tilde{p}_i}^{-1}\circ \gamma)' (z)|^2 \geq 0, \quad \forall z \in \DD.
\end{eqnarray}

By inequality (\ref{4-4-2}), (\ref{4-4-3}) and the Mean-Value-Inequality, we have
\begin{eqnarray}\label{4-4-4}
|\mu(0)|&\geq& \frac{1}{4}-\frac{1}{4}(\sum_{\gamma\neq e \in \Gamma_g}\frac{1}{\pi/2 }\iint_{B_{eu}(0; \frac{1}{\sqrt{2}})}|\gamma'(z)|^2|dz|^2)\\
&-&\frac{1}{4} (\sum_{i=2}^n \sum_{\gamma \in \Gamma_g}\frac{1}{\pi/2 }\iint_{B_{eu}(0; \frac{1}{\sqrt{2}})} |(\sigma_{\tilde{p}_i}^{-1}\circ \gamma)' (z)|^2 |dz|^2) \nonumber\\
&\geq& \frac{1}{2\pi} (\frac{\pi}{2}-(\sum_{\gamma\neq e \in \Gamma_g} \Area (\gamma \circ B_{eu}(0; \frac{1}{\sqrt{2}})))\nonumber\\
&-&( \sum_{i=2}^n \sum_{\gamma \in \Gamma_g} \Area(\sigma_{\tilde{p}_i}^{-1}\circ \gamma\circ B_{eu}(0; \frac{1}{\sqrt{2}})))).\nonumber
\end{eqnarray}
Then, Part (2) of the conclusion follows.
\end{proof}

\section{Proof of Theorem \ref{key-lemma}}\label{key}

In this section we will prove Theorem \ref{key-lemma}. 

Let $\{p_i\}_{1\leq i \leq n}$ be the finite set of points in $X_g$ satisfying the conditions of Theorem \ref{key-lemma}. As in last section, we lift $p_1$ to the origin $\tilde{p}_1=0\in \DD$ and also $\{p_i\}_{i=2}^{n}$ to $\{\tilde{p}_i\}_{i=2}^{n}\subset F$ respectively, where $F$ is the Dirichlet fundamental domain centered at $0$ w.r.t $\Gamma_g$. Consider $\mu \in \HBD(X_g)$ defined in equation (\ref{hbd-a}). Then, Theorem \ref{key-lemma} is equivalent to the following statement.
\begin{theorem}\label{ubfmu}
There exists two universal constants $C_3, C_4>0$ such that

(1). $\sup_{z\in \DD}|\mu(z)|\leq C_3$.

(2). $\min_{1\leq i \leq n}|\mu(\tilde{p}_i)|\geq C_4$. 
\end{theorem}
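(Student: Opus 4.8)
The plan is to derive Theorem~\ref{ubfmu} from Proposition~\ref{2-bounds}, reducing both parts to a single uniform-in-$(n,g)$ estimate for a lattice sum. The common engine is a disjointness fact: the hypotheses $\inj(p_i)\geq \frac{\epsilon_0}{2}$ and $\dist(p_i,p_j)\geq \epsilon_0$ force the family $\{\gamma\circ B(\tilde{p}_i;\frac{\epsilon_0}{2})\}_{\gamma\in\Gamma_g,\,1\leq i\leq n}$ to be pairwise disjoint in $\DD$. Indeed, for fixed $i$ two distinct translates cannot meet without yielding a geodesic loop at $p_i$ shorter than $\inj(p_i)$, and for $i\neq j$ the balls descend to disjoint balls on $X_g$. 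Consequently the centers $\{\gamma\tilde{p}_i\}_{i,\gamma}$ form an $\epsilon_0$-separated subset of $\DD$, and because $\frac{\epsilon_0}{2}>\ln(3+2\sqrt{2})$ these disjoint balls strictly contain the critical balls $\gamma\circ B(\tilde{p}_i;\ln(3+2\sqrt{2}))$ of Lemma~\ref{inclu} and Proposition~\ref{uf-f}, leaving a definite amount of room to spare.

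For Part~(1), inequality~(\ref{4-1-3}) gives $|\mu(z)|\leq f(z)=\frac14\sum_{i,\gamma}(1-|(\sigma_{\tilde{p}_i}^{-1}\circ\gamma)(z)|^2)^2$ for every $z\in\DD$. The idea is to trade the factors $(1-|w|^2)^2$ for the decay $(1-|w|^2)^2\asymp e^{-2\dist_\DD(0,w)}$. Since $\sigma_{\tilde{p}_i}$ is an isometry with $\sigma_{\tilde{p}_i}(0)=\tilde{p}_i$, one has $\dist_\DD(0,(\sigma_{\tilde{p}_i}^{-1}\circ\gamma)(z))=\dist_\DD(\tilde{p}_i,\gamma z)=\dist_\DD(\gamma^{-1}\tilde{p}_i,z)$, whence $f(z)\asymp\sum_{i,\gamma}e^{-2\dist_\DD(\gamma^{-1}\tilde{p}_i,z)}$. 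As $\{\gamma^{-1}\tilde{p}_i\}_{i,\gamma}=\{\gamma\tilde{p}_i\}_{i,\gamma}$ is $\epsilon_0$-separated, a packing count bounds the number of its points in a shell $R\leq\dist_\DD(z,\cdot)<R+1$ by $\Area_{hyp}(B(z;R+1+\frac{\epsilon_0}{2}))/\Area_{hyp}(B(\cdot;\frac{\epsilon_0}{2}))\lesssim e^{R}$, so the series is dominated by $\sum_{R}e^{R}e^{-2R}<\infty$, a constant depending only on $\epsilon_0$ and uniform in $z$. This gives $C_3$ (and Proposition~\ref{uf-f} together with Part~(1) of Proposition~\ref{2-bounds} offers an equivalent route localizing $\sup f$ to the union of the balls, which reduces to the same packing sum).

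For Part~(2), Part~(2) of Proposition~\ref{2-bounds}, applied at each $\tilde{p}_j$ after conjugating $\Gamma_g$ by $\sigma_{\tilde{p}_j}$ to send $\tilde{p}_j$ to the origin, gives $|\mu(\tilde{p}_j)|\geq\frac{1}{2\pi}\big(\frac{\pi}{2}-A\big)$, where the main term $\frac{\pi}{2}$ comes from the identity contribution $\sigma_{\tilde{p}_j}^{-1}\circ\gamma=\id$ and $A$ collects the error areas over $(\gamma\neq e,\,i=j)$ and $(i\neq j,\,\gamma)$. Every error area equals $\Area(B(\sigma_{\tilde{p}_i}^{-1}\gamma\tilde{p}_j;\ln(3+2\sqrt2)))$, whose center lies at hyperbolic distance $\geq\epsilon_0$ from $\tilde{p}_j$, since $\dist_\DD(\tilde{p}_j,\gamma\tilde{p}_j)\geq 2\inj(p_j)\geq\epsilon_0$ and $\dist_\DD(\tilde{p}_j,\gamma\tilde{p}_i)\geq\dist(p_i,p_j)\geq\epsilon_0$. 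Bounding each such area by the same $e^{-2\dist_\DD}$ decay and running the identical packing count, but now restricted to the $\epsilon_0$-separated set at distances $\geq\epsilon_0$, yields $A\lesssim e^{-\epsilon_0}$ times a universal constant. The threshold $\epsilon_0>2\ln(3+2\sqrt2)$ is exactly what keeps this tail strictly below $\frac{\pi}{2}$, so $|\mu(\tilde{p}_j)|\geq C_4>0$ uniformly in $j$, $n$ and $g$.

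The main obstacle is the uniform control of these lattice sums. The two delicate points are: (i) converting the raw Euclidean areas of Proposition~\ref{2-bounds} into the clean decay $e^{-2\dist_\DD}$ without losing uniformity, which uses the bound $1-|\eta|^2>\frac12$ on $B_{eu}(0;\frac{1}{\sqrt2})$ together with the triangle inequality to absorb the shift by $\ln(3+2\sqrt2)$; and (ii) for the lower bound, verifying that the constant produced by the $\H^2$ packing estimate, multiplied by the worst-case prefactor $(3+2\sqrt2)^2=e^{2\ln(3+2\sqrt2)}$ coming from that shift, still leaves the error strictly under $\frac{\pi}{2}$ precisely when $\epsilon_0>2\ln(3+2\sqrt2)$. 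Everything else — the disjointness, the reductions through Propositions~\ref{uf-f} and~\ref{2-bounds}, and the relabeling symmetry extending the bound from $\tilde{p}_1$ to all $\tilde{p}_j$ — is routine.
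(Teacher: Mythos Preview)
Your outline for Part~(1) is sound and gives a genuine alternative to the paper's argument. Where the paper localizes $\sup f$ to the balls $B(\tilde p_i;\ln(3+2\sqrt2))$ via Proposition~\ref{uf-f}, invokes the identity $\Area(h\circ B(0;r))=\Area(h^{-1}\circ B(0;r))$ for $h\in\Aut(\DD)$ (Lemma~\ref{4-3-1}), and then uses the disjointness of Lemma~\ref{4-3-2} to bound the area sum in Proposition~\ref{2-bounds}(1) by $\Area(\DD)=\pi$, you instead trade $(1-|w|^2)^2$ for $e^{-2\dist_\DD(0,w)}$ and run a shell-packing count over the $\epsilon_0$-separated set $\{\gamma\tilde p_i\}$. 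Both routes produce a bound $C_3$ depending only on $\epsilon_0$; the paper's gives the clean value $C_3=\tfrac{1}{16\delta^2}$, yours a less explicit but equally valid constant.

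For Part~(2), however, there is a real gap. Your packing estimate on the error $A$ is of the shape $A\leq C(\epsilon_0)\,e^{-\epsilon_0}$, with $C(\epsilon_0)$ coming from a ratio of hyperbolic ball areas in the packing count, and this constant is not small. Near the threshold $\epsilon_0=2\ln(3+2\sqrt2)\approx 3.53$ the crude count allows dozens of lattice points already in the first shell $[\epsilon_0,\epsilon_0+1)$, each contributing an area of order $\pi/50$, so the resulting bound on $A$ overshoots $\tfrac{\pi}{2}$ by a large factor. The assertion that the packing tail ``still leaves the error strictly under $\tfrac{\pi}{2}$ precisely when $\epsilon_0>2\ln(3+2\sqrt2)$'' is therefore not justified; a packing argument would only recover Part~(2) for $\epsilon_0$ above some larger, unspecified threshold. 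The paper avoids this loss entirely via Lemma~\ref{4-3-1}: each error area $\Area\bigl(\sigma_{\tilde p_i}^{-1}\circ\gamma\circ B_{eu}(0;\tfrac{1}{\sqrt2})\bigr)$ equals $\Area\bigl(\gamma^{-1}\circ\sigma_{\tilde p_i}\circ B_{eu}(0;\tfrac{1}{\sqrt2})\bigr)$, and after enlarging the radius to $\tfrac{\epsilon_0}{2}$ these \emph{latter} balls are pairwise disjoint in $\DD$ by Lemma~\ref{4-3-2}. Summing their Euclidean areas and subtracting the identity contribution yields $A\leq \pi-\pi\bigl(\tfrac{e^{\epsilon_0/2}-1}{e^{\epsilon_0/2}+1}\bigr)^{2}$, which is $<\tfrac{\pi}{2}$ exactly when $\epsilon_0>2\ln(3+2\sqrt2)$. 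That area-inversion identity is the missing idea; without it you do not reach the stated threshold.
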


First we prove Part (1) of the theorem above.

We separate the proof into several lemmas. The first one is elementary in hyperbolic geometry. Recall that $\Area(\cdot)$ is the Euclidean area function.
\begin{lemma}\label{4-3-1}
Let $B(0;r)$ be the hyperbolic geodesic ball of radius $r$ centered at $0$ where $r>0$. Then, for any $h\in \Aut(\DD)$ we have
$$\Area(h\circ B(0;r))=\Area(h^{-1}\circ B(0;r)).$$
\end{lemma}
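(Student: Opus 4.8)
The plan is to exploit the fact that $B(0;r)$ is a hyperbolic geodesic ball \emph{centered at the origin}, which is therefore the Euclidean disk $B_{eu}(0;\frac{e^r-1}{e^r+1})$ and in particular is invariant under every Euclidean rotation $z\mapsto e^{\mathbf{i}\phi}z$ about the origin. Since such a rotation is simultaneously an element of $\Aut(\DD)$ (a hyperbolic isometry) and a Euclidean isometry (area-preserving), the whole argument reduces to tracking how $h$ and $h^{-1}$ move the center $0$.

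First I would write $h$ in the normal form given in the preliminaries, $h(z)=\exp(\mathbf{i}\theta)\frac{z-a}{1-\overline{a}z}$ with $a\in\DD$ and $\theta\in[0,2\pi)$, and invert it explicitly to get $h^{-1}(w)=\frac{\exp(-\mathbf{i}\theta)w+a}{1+\overline{a}\exp(-\mathbf{i}\theta)w}$. Evaluating at $0$ gives $h(0)=-\exp(\mathbf{i}\theta)a$ and $h^{-1}(0)=a$, so that
\[
|h(0)|=|a|=|h^{-1}(0)|.
\]
This single identity is the crux of the lemma.

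Next, because $h,h^{-1}$ are hyperbolic isometries, $h\circ B(0;r)$ and $h^{-1}\circ B(0;r)$ are hyperbolic geodesic balls of the same radius $r$, centered at $h(0)$ and $h^{-1}(0)$ respectively. By the identity above their centers lie at the same Euclidean distance $|a|$ from the origin, hence there is a rotation $z\mapsto e^{\mathbf{i}\phi}z$ about the origin carrying the center $h(0)$ to the center $h^{-1}(0)$. Such a rotation is a hyperbolic isometry fixing $0$, so it maps the geodesic ball $h\circ B(0;r)$ onto the geodesic ball $h^{-1}\circ B(0;r)$; being a Euclidean rotation it also preserves Euclidean area. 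Therefore the two balls have equal Euclidean area, which is exactly the claim.

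I do not expect a genuine obstacle here, as the statement is elementary; the only point requiring care is the bookkeeping in the two paragraphs above, namely confirming $|h(0)|=|h^{-1}(0)|$ and that a hyperbolic ball of radius $r$ has Euclidean area depending only on $r$ and the distance of its center from the origin. If one prefers a purely computational route, the same conclusion follows by writing $\Area(h\circ B(0;r))=\iint_{B(0;r)}|h'(z)|^2\,dx\,dy$, computing $|h'(z)|^2=\frac{(1-|a|^2)^2}{|1-\overline{a}z|^4}$ and $|(h^{-1})'(w)|^2=\frac{(1-|a|^2)^2}{|1+\overline{a}\exp(-\mathbf{i}\theta)w|^4}$, and applying the substitution $w=-\exp(\mathbf{i}\theta)z$, which is a rotation preserving both the domain $B(0;r)$ and Euclidean area and which transforms one integrand into the other.
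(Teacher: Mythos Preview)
Your proposal is correct. In fact your ``purely computational route'' is precisely the paper's proof: the paper writes $h$ and $h^{-1}$ in the same normal form, expresses both areas as $(1-|a|^2)^2$ times the integrals of $|1-\overline{a}z|^{-4}$ and $|1+\overline{a}e^{-\mathbf{i}\theta}\eta|^{-4}$ over $B(0;r)$, and then applies the rotation $z=-e^{-\mathbf{i}\theta}\eta$ to match them.

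Your primary geometric argument is a genuinely different and cleaner route. Instead of computing the Jacobians, you observe directly that $|h(0)|=|h^{-1}(0)|=|a|$, so the two image sets are hyperbolic $r$-balls whose centers lie on a common Euclidean circle about the origin; a single rotation in $\Aut(\DD)$ then carries one onto the other while preserving Euclidean area. This replaces the integral identity by a one-line symmetry argument and makes transparent \emph{why} the lemma holds (the Euclidean area of a hyperbolic $r$-ball in $\DD$ depends only on $r$ and the distance of its center from the origin). The paper's computation, on the other hand, is self-contained and does not appeal to this geometric fact, which may be preferable for readers who want an explicit verification.
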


\begin{proof}
Since $h\in \Aut(\DD)$, there exists $\theta \in [0,2\pi)$ and $a\in \DD$ such that 
$$h(z)=\exp{(\textbf{i}\theta)}\frac{z-a}{1-\overline{a}z}, \quad \forall z \in \DD. $$
Then, we have
$$h^{-1}(z)=\frac{a+\exp{(-\textbf{i}\theta)}z}{1+\overline{a}\exp{(-\textbf{i}\theta)}z}, \quad \forall z \in \DD. $$

Use the area transformation formula we have
\begin{eqnarray}\label{3-00}
\Area(h\circ B(0;r))&=&\iint_{B(0;r)}|h'(z)|^2 |dz|^2\\
&=& (1-|a|^2)^2  \iint_{B(0;r)} \frac{1}{|1-\overline{a}z|^4}|dz|^2. \nonumber
\end{eqnarray} 

Similarly we have
\begin{eqnarray}\label{3-01}
\ \\
\Area(h^{-1}\circ B(0;r))&=&\iint_{B(0;r)}|(h^{-1})'(\eta)|^2 |d\eta|^2 \nonumber\\
&=& (1-|a|^2)^2  \iint_{B(0;r)} \frac{1}{|1+\overline{a}\exp{(-\textbf{i}\theta)}\eta|^4}|d\eta|^2. \nonumber
\end{eqnarray} 

After taking a substitution $z=-\exp{(-\textbf{i}\theta)}\eta$ in $B(0;r)$, it is clear that
\begin{eqnarray}\label{3-02}
\iint_{B(0;r)} \frac{1}{|1-\overline{a}z|^4}|dz|^2=\frac{1}{|1+\overline{a}\exp{(-\textbf{i}\theta)}\eta|^4}|d\eta|^2.
\end{eqnarray} 

Then, the conclusion follows from equations (\ref{3-00}), (\ref{3-01}) and (\ref{3-02}).
\end{proof}

\begin{lemma}\label{4-3-2} 
For either $\gamma_1 \neq \gamma_2 \in \Gamma_g$ or $i \neq j \in [1,n]$, 
$$\gamma_1\circ \sigma_{\tilde{p}_i}\circ B(0; \frac{\epsilon_0}{2}) \cap \gamma_2\circ \sigma_{\tilde{p}_j}\circ B(0; \frac{\epsilon_0}{2})=\emptyset.$$
\end{lemma}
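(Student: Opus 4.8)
The plan is to reduce the disjointness of the two balls to a lower bound on the hyperbolic distance between their centers, and then to read off that lower bound from the two standing hypotheses (the injectivity-radius bound and the separation condition). Since every element of $\Aut(\DD)$ is a hyperbolic isometry carrying geodesic balls to geodesic balls, and $\sigma_{\tilde{p}_i}(0)=\tilde{p}_i$, the set $\gamma_1\circ\sigma_{\tilde{p}_i}\circ B(0;\frac{\epsilon_0}{2})$ is precisely the geodesic ball $B(\gamma_1(\tilde{p}_i);\frac{\epsilon_0}{2})$, and likewise the second set is $B(\gamma_2(\tilde{p}_j);\frac{\epsilon_0}{2})$. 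Two open geodesic balls of radius $\frac{\epsilon_0}{2}$ are disjoint as soon as the distance between their centers is at least $\epsilon_0$: an intersection point would violate the triangle inequality. Hence it suffices to prove
$$\dist_{\DD}\big(\gamma_1(\tilde{p}_i),\gamma_2(\tilde{p}_j)\big)\geq \epsilon_0 \quad\text{whenever } (\gamma_1,i)\neq(\gamma_2,j).$$

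Using that $\Gamma_g$ acts on $\DD$ by isometries, I would rewrite the left-hand side as $\dist_{\DD}(\tilde{p}_i,\gamma(\tilde{p}_j))$ with $\gamma:=\gamma_1^{-1}\gamma_2$, and then split into two cases. If $i\neq j$, then $\gamma(\tilde{p}_j)$ is a lift of $p_j$ and $\tilde{p}_i$ is a lift of $p_i$; since the covering projection $\pi\colon\DD\to X_g$ is a local isometry and hence distance non-increasing, $\dist_{\DD}(\tilde{p}_i,\gamma(\tilde{p}_j))\geq \dist(p_i,p_j)\geq \epsilon_0$ by the separation hypothesis. If $i=j$, then $(\gamma_1,i)\neq(\gamma_2,j)$ forces $\gamma\neq e$, and $\dist_{\DD}(\tilde{p}_i,\gamma(\tilde{p}_i))\geq 2\inj(p_i)\geq \epsilon_0$, because the injectivity radius at $p_i$ is exactly half the minimal displacement $\min_{\gamma\neq e}\dist_{\DD}(\tilde{p}_i,\gamma(\tilde{p}_i))$ of the nontrivial deck transformations. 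These two cases exhaust the hypothesis $(\gamma_1,i)\neq(\gamma_2,j)$ and yield the claim.

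The only genuinely substantive point is the case $i\neq j$ with $\gamma\neq e$: here one must not merely invoke the separation of the chosen lifts inside the fundamental domain $F$ recorded in (\ref{usep-1}), but the stronger fact that \emph{every} $\Gamma_g$-translate $\gamma(\tilde{p}_j)$ remains at distance at least $\dist(p_i,p_j)$ from $\tilde{p}_i$. This is the standard identity $\dist(p_i,p_j)=\min_{\gamma\in\Gamma_g}\dist_{\DD}(\tilde{p}_i,\gamma(\tilde{p}_j))$, equivalently the statement that $\pi$ is $1$-Lipschitz downstairs. Everything else is the triangle inequality combined with the two hypotheses, so I expect this to be among the easiest of the auxiliary lemmas; the only real care lies in the bookkeeping that identifies the centers of the two balls correctly.
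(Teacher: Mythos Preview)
Your proof is correct and follows essentially the same route as the paper's. Both arguments first identify $\gamma\circ\sigma_{\tilde{p}_i}\circ B(0;\frac{\epsilon_0}{2})$ with the geodesic ball $B(\gamma(\tilde{p}_i);\frac{\epsilon_0}{2})$ and then split into the cases $i\neq j$ (handled by the separation hypothesis $\dist(p_i,p_j)\geq\epsilon_0$) and $i=j$, $\gamma_1\neq\gamma_2$ (handled by $\inj(p_i)\geq\frac{\epsilon_0}{2}$); the only cosmetic difference is that the paper phrases the first case via the projection $\pi\colon\DD\to X_g$ (disjointness downstairs implies disjointness upstairs), whereas you phrase it via the equivalent fact that $\pi$ is $1$-Lipschitz, i.e.\ $\dist(p_i,p_j)=\min_{\gamma}\dist_{\DD}(\tilde{p}_i,\gamma(\tilde{p}_j))$.
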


\begin{proof}
Since $\sigma_{\tilde{p}_i} \in \Aut(\DD)$, we have
$$\sigma_{\tilde{p}_i}\circ B(0;\frac{\epsilon_0}{2})=B(\tilde{p}_i;\frac{\epsilon_0}{2}).$$

\textsl{Case (a). $i\neq j \in [1,n]$.}

For any $\gamma_1, \gamma_2\in \Gamma_g$, we project the geodesic balls $\{\gamma_1 \circ B(\tilde{p}_i; \frac{\epsilon_0}{2}), \gamma_2\circ B(\tilde{p}_j;\frac{\epsilon_0}{2})\}\subset \DD$ to the two balls $\{B(p_i;\frac{\epsilon_0}{2}), B(p_j;\frac{\epsilon_0}{2})\}$ 
in $X_g=\DD/\Gamma_g$. Since we assume that $\dist(p_i,p_j)\geq \epsilon_0$, 
$$B(p_i; \frac{\epsilon_0}{2})\cap B(p_j; \frac{\epsilon_0}{2})=\emptyset$$ 
which in particular implies
$$\gamma_1 \circ B(\tilde{p}_i; \frac{\epsilon_0}{2}) \cap\gamma_2 \circ B(\tilde{p}_j; \frac{\epsilon_0}{2})=\emptyset.$$ 

\textsl{Case (b). $\gamma_1 \neq \gamma_2 \in \Gamma_g$ and $i=j$.}

For this case the geodesic balls $\{\gamma_1 \circ B(\tilde{p}_i;\frac{\epsilon_0}{2}), \gamma_2\circ B(\tilde{p}_i; \frac{\epsilon_0}{2})\}$ in $\DD$ are the two lifts of the geodesic ball $B(p_i; \frac{\epsilon_0}{2})\subset X_g$. Then, the conclusion follows from our assumption that  
$$\inj(p_i)\geq \frac{\epsilon_0}{2}.$$
\end{proof}

\begin{proof}[Proof of Part (1) of Theorem \ref{ubfmu}]
First inequality (\ref{4-1-3}) and Proposition \ref{uf-f} tell that
\begin{eqnarray}\label{4-3-3}
\sup_{z\in \DD}|\mu(z)| \leq \sup_{z \in \cup_{i=1}^nB(\tilde{p}_i; \ln(3+2\sqrt{2}))}f(z)
\end{eqnarray}
where $f$ is given in equation (\ref{4-1-4}).\\

Recall that $\tilde{p}_1=0$. First we show that
\begin{eqnarray}\label{4-3-3-1}
\sup_{z \in B(0; \ln(3+2\sqrt{2}))}f(z)\leq \frac{1}{16 \delta^2}
\end{eqnarray}
where $\delta$ is the universal constant in Lemma \ref{inclu}.

For any $z \in B(0; \ln(3+2\sqrt{2}))$, let $\delta$ be the universal constant in Lemma \ref{inclu}. Then,
\begin{eqnarray}\label{4-3-4-1}
B_{eu}(z;\delta)\subset B(0; \frac{\epsilon_0}{2}).
\end{eqnarray}

Combine Part (1) of Proposition \ref{2-bounds} and Lemma \ref{4-3-1}, we have for all $z\in B(0; \ln(3+2\sqrt{2}))$,
\begin{eqnarray}\label{4-3-4-1-1}
f(z)&\leq & \frac{1}{16\pi \delta^2} \sum_{i=1}^n\sum_{\gamma \in \Gamma_g} \Area (\gamma^{-1} \circ\sigma_{\tilde{p}_i}\circ  B(0;\frac{\epsilon_0}{2})).
\end{eqnarray}

Lemma \ref{4-3-2} tells that the balls $\{\gamma^{-1} \circ\sigma_{\tilde{p}_i}\circ  B(0; \frac{\epsilon_0}{2})\}_{1\leq i\leq n, \gamma \in \Gamma_g}$ are pairwisely disjoint. Hence, inequality (\ref{4-3-4-1-1}) tells that for all $z\in B(0; \ln(3+2\sqrt{2}))$,
\begin{eqnarray}\label{4-3-5}
f(z)&\leq & \frac{1}{16\pi \delta^2} \sum_{i=1}^n\sum_{\gamma \in \Gamma_g} \Area (\gamma^{-1} \circ\sigma_{\tilde{p}_i}\circ  B(0; \frac{\epsilon_0}{2}))\\
&\leq & \frac{1}{16\pi \delta^2} \Area(\DD) \nonumber\\
&=&\frac{1}{16 \delta^2}.\nonumber
\end{eqnarray}

Since $z$ is arbitrary in $B(0; \ln(3+2\sqrt{2}))$, we have
\begin{eqnarray}\label{4-3-5-1}
\sup_{z\in  B(0; \ln(3+2\sqrt{2}))}f(z)\leq \frac{1}{16 \delta^2}.
\end{eqnarray}

We continue to prove Part (1) of Theorem \ref{ubfmu}. 

For any $i_0\in [2,n]$ and $z\in B(\tilde{p}_{i_0}; \ln(3+2\sqrt{2}))$. So we have 
$$z= \sigma_{\tilde{p}_{i_0}}(\eta)$$
for some $\eta \in B(0; \ln(3+2\sqrt{2}))$. 

Since $\rho( \sigma_{\tilde{p}_{i_0}}( \eta))  |\sigma'_{\tilde{p}_{i_0}}( \eta)|^2=\rho(\eta)$, we have
\begin{eqnarray}\label{4-3-5-2}
f(z)&=&f( \sigma_{\tilde{p}_{i_0}}( \eta)) \\
&=&\frac{\sum_{i=1}^n \sum_{\gamma \in \Gamma_g}|(\sigma_{\tilde{p}_i}^{-1}\circ \gamma)' ( \sigma_{\tilde{p}_{i_0}}( \eta))|^2}{\rho( \sigma_{\tilde{p}_{i_0}}(\eta))} \nonumber \\
&=&\frac{\sum_{i=1}^n \sum_{\gamma \in \Gamma_g}|(\sigma_{\tilde{p}_i}^{-1}\circ \gamma\circ  \sigma_{\tilde{p}_{i_0}})'( \eta))|^2}{\rho( \eta)}.\nonumber
\end{eqnarray}

Since $\eta \in B(0; \ln(3+2\sqrt{2}))$, by using the same argument in the proof of Part (1) of Proposition \ref{2-bounds} we have
\begin{eqnarray*}
f(z)&\leq & \frac{1}{16\pi \delta^2} \sum_{i=1}^n\sum_{\gamma \in \Gamma_g} \Area ((\sigma_{\tilde{p}_i}^{-1}\circ \gamma\circ  \sigma_{\tilde{p}_{i_0}})\circ  B(0;  \frac{\epsilon_0}{2}))
\end{eqnarray*}

From Lemma \ref{4-3-1} we have
\begin{eqnarray}\label{4-3-5-3}
f(z)&\leq & \frac{1}{16\pi \delta^2} \sum_{i=1}^n\sum_{\gamma \in \Gamma_g} \Area ((\sigma_{\tilde{p}_{i_0}}^{-1}\circ \gamma^{-1}\circ  \sigma_{\tilde{p}_{i}})\circ  B(0;  \frac{\epsilon_0}{2}))
\end{eqnarray}

Lemma \ref{4-3-2} tells that the balls $\{\gamma^{-1} \circ\sigma_{\tilde{p}_i}\circ  B(0;  \frac{\epsilon_0}{2})\}_{1\leq i\leq n, \gamma \in \Gamma_g}$ are pairwisely disjoint. Since $\sigma_{\tilde{p}_{i_0}}^{-1} \in \Aut(\DD)$, the geodesic balls $\{\sigma_{\tilde{p}_{i_0}}^{-1}\circ \gamma^{-1} \circ\sigma_{\tilde{p}_i}\circ  B(0;  \frac{\epsilon_0}{2})\}_{1\leq i\leq n, \gamma \in \Gamma_g}$ are also pairwisely disjoint. Hence, inequality (\ref{4-3-5-3}) tells that for all $z\in B(\tilde{p}_{i_0}; \ln(3+2\sqrt{2}))$,
\begin{eqnarray}\label{4-3-6}
f(z)&\leq & \frac{1}{16\pi \delta^2} \sum_{i=1}^n\sum_{\gamma \in \Gamma_g} \Area ((\sigma_{\tilde{p}_{i_0}}^{-1}\circ \gamma^{-1}\circ  \sigma_{\tilde{p}_{i}})\circ  B(0;  \frac{\epsilon_0}{2}))\\
&\leq & \frac{1}{16\pi \delta^2} \Area(\DD) \nonumber \\
&=&\frac{1}{16 \delta^2}.\nonumber
\end{eqnarray}

Since $i_0\in [2,n]$ is arbitrary, we have
\begin{eqnarray}\label{4-3-6-1}
\sup_{z\in  \cup_{i=2}^{n}B(\tilde{p}_i; \ln(3+2\sqrt{2}))}f(z)\leq \frac{1}{16 \delta^2}.
\end{eqnarray}

Then, Part (1) of the conclusion follows from inequalities (\ref{4-3-3}), (\ref{4-3-5-1}) and (\ref{4-3-6-1}) by choosing $C_3=\frac{1}{16 \delta^2}$.
\end{proof}
\

\begin{proof}[Proof of Part (2) of Theorem \ref{ubfmu}]

Recall $\tilde{p}_1=0$. We first show that 
\begin{eqnarray}\label{4-4-7-1}
|\mu(0)|\geq \frac{1}{2}((\frac{e^{\frac{\epsilon_0}{2}}-1}{e^{ \frac{\epsilon_0}{2}}+1})^2-\frac{1}{2}).
\end{eqnarray}
Recall that $ \frac{\epsilon_0}{2} >\ln(3+2\sqrt{2})$. So the constant satisfies that
\begin{eqnarray*}
\frac{1}{2}((\frac{e^{\frac{\epsilon_0}{2}}-1}{e^{ \frac{\epsilon_0}{2}}+1})^2-\frac{1}{2})>0.
\end{eqnarray*}

Recall the Euclidean ball $B_{eu}(0;\frac{1}{\sqrt{2}})$ is the same as the hyperbolic disk $B(0; \ln(3+2\sqrt{2}))$. Then, Lemma \ref{4-3-1} tells that for all $1\leq i \leq n$ and $\gamma \in \Gamma_g$,
\begin{eqnarray}\label{4-4-5}
\Area(\sigma_{\tilde{p}_i}^{-1}\circ \gamma\circ  B_{eu}(0;\frac{1}{\sqrt{2}}))=\Area(\gamma^{-1} \circ \sigma_{\tilde{p}_i}\circ B_{eu}(0;\frac{1}{\sqrt{2}})). 
\end{eqnarray}

From Lemma \ref{4-3-2} and equation (\ref{4-4-5}) we know that
\begin{eqnarray}\label{4-4-6}
&&\sum_{\gamma \neq e \in \Gamma_g} \Area (\gamma \circ B_{eu}(0;\frac{1}{\sqrt{2}}))\\
&+& \sum_{i=2}^n \sum_{\gamma \in \Gamma_g} \Area( \gamma^{-1}\circ \sigma_{\tilde{p}_i}\circ B_{eu}(0;\frac{1}{\sqrt{2}})) \nonumber\\
&\leq& \Area(\DD)-\Area(B(0; \frac{\epsilon_0}{2}))\nonumber\\
&=&\pi(1-(\frac{e^{ \frac{\epsilon_0}{2}}-1}{e^{ \frac{\epsilon_0}{2}}+1})^2). \nonumber
\end{eqnarray}

Thus, from Part (2) of Proposition \ref{2-bounds} and inequality (\ref{4-4-6}) we know that
\begin{eqnarray}\label{4-4-7}
|\mu(0)| &\geq& \frac{1}{2\pi}(\frac{\pi}{2}-\pi(1-(\frac{e^{ \frac{\epsilon_0}{2}}-1}{e^{ \frac{\epsilon_0}{2}}+1})^2)) \\
&=& \frac{1}{2}((\frac{e^{\frac{\epsilon_0}{2}}-1}{e^{ \frac{\epsilon_0}{2}}+1})^2-\frac{1}{2}).\nonumber
\end{eqnarray}

We continue to prove Part (2) of Theorem \ref{ubfmu}.

For any $i_0\in [2,n]$ and we let $\sigma_{\tilde{p}_i} \in \Aut(\DD)$ with $\sigma_{\tilde{p}_{i_0}}(0)=\tilde{p}_{i_0}$. Then,
\begin{eqnarray}\label{4-4-8}
|\mu(\tilde{p}_{i_0})|=|\mu \circ \sigma_{\tilde{p}_{i_0}} (0)|.
\end{eqnarray}

Since $\rho( \sigma_{\tilde{p}_{i_0}}(0))  |\sigma'_{\tilde{p}_{i_0}}(0)|^2=\rho(0)=4$, from equation (\ref{hbd-a}) and the triangle inequality we know that
\begin{eqnarray}\label{4-5-1}
|\mu(\tilde{p}_{i_0})|&=&|\sum_{i=1}^n \sum_{\gamma \in \Gamma_g} \frac{(\sigma^{-1}_{\tilde{p}_i}\circ \gamma)'(\sigma_{\tilde{p}_{i_0}}(0))^2}{\rho(\sigma_{\tilde{p}_{i_0}}(0))}|  \\
&\geq& \frac{1}{4}-\frac{1}{4}(\sum_{\gamma\neq e \in \Gamma_g}|(\sigma^{-1}_{\tilde{p}_{i_0}}\circ \gamma\circ\sigma_{\tilde{p}_{i_0}})'(0)|^2) \nonumber\\
&-&\frac{1}{4}(\sum_{i\neq i_0} \sum_{\gamma \in \Gamma_g}|(\sigma^{-1}_{\tilde{p}_i}\circ \gamma\circ\sigma_{\tilde{p}_{i_0}})'(0)|^2). \nonumber
\end{eqnarray}

Similar as the proof of Part (2) of Proposition \ref{2-bounds} we have
\begin{eqnarray}\label{4-5-2}
\ \\
 |\mu(\tilde{p}_{i_0})|&\geq& \frac{1}{4}-\frac{1}{4}\cdot \frac{2}{\pi}(\sum_{\gamma\neq e \in \Gamma_g}\iint_{B_{eu}(0;\frac{1}{\sqrt{2}})}|(\sigma^{-1}_{\tilde{p}_{i_0}}\circ \gamma\circ\sigma_{\tilde{p}_{i_0}})'(z)|^2|dz|^2) \nonumber \\
&-&\frac{1}{4}\cdot \frac{2}{\pi}(\sum_{i\neq i_0} \sum_{\gamma \in \Gamma_g}\iint_{B_{eu}(0;\frac{1}{\sqrt{2}})}|(\sigma^{-1}_{\tilde{p}_{i}}\circ \gamma\circ\sigma_{\tilde{p}_{i_0}})'(z)|^2|dz|^2)  \nonumber \\
&=& \frac{1}{4}-\frac{1}{2\pi}(\sum_{\gamma\neq e \in \Gamma_g}\Area(\sigma^{-1}_{\tilde{p}_{i_0}}\circ \gamma\circ\sigma_{\tilde{p}_{i_0}}\circ B_{eu}(0;\frac{1}{\sqrt{2}}))) \nonumber\\
&-&\frac{1}{2\pi}( \sum_{i\neq i_0} \sum_{\gamma \in \Gamma_g} \Area(\sigma^{-1}_{\tilde{p}_{i}}\circ \gamma\circ\sigma_{\tilde{p}_{i_0}}\circ B_{eu}(0;\frac{1}{\sqrt{2}}))). \nonumber 
\end{eqnarray}

By Lemma \ref{4-3-1} we have
\begin{eqnarray}\label{4-5-3}
|\mu(\tilde{p}_{i_0})|&\geq& \frac{1}{4}-\frac{1}{2\pi}(\sum_{\gamma\neq e \in \Gamma_g}\Area(\sigma^{-1}_{\tilde{p}_{i_0}}\circ \gamma\circ\sigma_{\tilde{p}_{i_0}}\circ B_{eu}(0;\frac{1}{\sqrt{2}}))) \\
&-&\frac{1}{2\pi}( \sum_{i\neq i_0} \sum_{\gamma \in \Gamma_g} \Area(\sigma^{-1}_{\tilde{p}_{i_0}}\circ \gamma\circ\sigma_{\tilde{p}_{i}}\circ B_{eu}(0;\frac{1}{\sqrt{2}}))). \nonumber 
\end{eqnarray}

Since $B_{eu}(0;\frac{1}{\sqrt{2}})\subset B(0;\frac{\epsilon_0}{2})$, we have 
\begin{eqnarray}\label{4-5-4}
|\mu(\tilde{p}_{i_0})|&\geq& \frac{1}{4}-\frac{1}{2\pi}(\sum_{\gamma\neq e \in \Gamma_g}\Area(\sigma^{-1}_{\tilde{p}_{i_0}}\circ \gamma\circ\sigma_{\tilde{p}_{i_0}}\circ  B(0;\frac{\epsilon_0}{2}))) \\
&-&\frac{1}{2\pi} (\sum_{i\neq i_0} \sum_{\gamma \in \Gamma_g} \Area(\sigma^{-1}_{\tilde{p}_{i_0}}\circ \gamma\circ\sigma_{\tilde{p}_{i}}\circ  B(0;\frac{\epsilon_0}{2}))). \nonumber 
\end{eqnarray}

Since $\sigma_{\tilde{p}_{i_0}} \in \Aut(\DD)$, from Lemma \ref{4-3-2} we know that for all either $\gamma_1 \neq \gamma_2 \in \Gamma_g$ or $i \neq j \in [1,n]$ we have
\begin{equation}\label{4-5-4-1}
 \sigma^{-1}_{\tilde{p}_{i_0}}\circ \gamma_1\circ \sigma_{\tilde{p}_i}\circ  B(0;\frac{\epsilon_0}{2}) \cap \sigma^{-1}_{\tilde{p}_{i_0}}\circ \gamma_2\circ \sigma_{\tilde{p}_j}\circ B(0;\frac{\epsilon_0}{2})=\emptyset. 
\end{equation}

Thus, equations (\ref{4-5-4}) and (\ref{4-5-4-1}) lead to 
\begin{eqnarray}\label{4-5-5}
|\mu(\tilde{p}_{i_0})|&\geq& \frac{1}{4}-\frac{1}{2\pi}(\Area (\DD)-\Area(\sigma^{-1}_{\tilde{p}_{i_0}}\circ e\circ\sigma_{\tilde{p}_{i_0}}\circ  B(0;\frac{\epsilon_0}{2})))\\
&=&\frac{1}{4}-\frac{1}{2\pi}(\pi-\pi(\frac{e^{\frac{\epsilon_0}{2}}-1}{e^{\frac{\epsilon_0}{2}}+1})^2) \nonumber \\
&=& \frac{1}{2}((\frac{e^{\frac{\epsilon_0}{2}}-1}{e^{\frac{\epsilon_0}{2}}+1})^2-\frac{1}{2}). \nonumber
\end{eqnarray}

Since $i_0\in[2,n]$ is arbitrary, Part (2) of the conclusion follows from inequalities (\ref{4-4-7-1}) and (\ref{4-5-5}) by choosing
$$C_4=\frac{1}{2}((\frac{e^{\frac{\epsilon_0}{2}}-1}{e^{\frac{\epsilon_0}{2}}+1})^2-\frac{1}{2}).$$
\end{proof}

\section{Proof of Theorem \ref{mt-1} and \ref{mt-2}}\label{mt-1-2}

In this section we will use the harmonic Beltrami differential $\mu$ defined in equation (\ref{hbd-a}) to prove Theorem \ref{mt-1} and Theorem \ref{mt-2}.

\begin{proposition}\label{l2-norm}
Given a positive integer $n\in \mathbb{Z}^{+}$ and a constant
$$\epsilon_0>2\ln(3+2\sqrt{2}).$$
Let $X_g \in \mathbb{M}_g$ be a hyperbolic surface and $\{p_i\}_{1\leq i \leq n}\subset X_g$ be an $(\epsilon_0,n)$-separated finite set of points satisfying that
$$\min_{1\leq i \leq n}\inj(p_i)\geq \frac{\epsilon_0}{2}.$$ 
Let $\mu$ be a harmonic Beltrami differential given in equation (\ref{hbd-a}). Then,
$$ ||\mu||_{WP}^2\asymp n.$$
\end{proposition}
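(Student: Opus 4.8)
The plan is to prove the two-sided comparison by establishing the lower bound $\|\mu\|_{WP}^2\geq cn$ and the upper bound $\|\mu\|_{WP}^2\leq Cn$ separately, with both $c,C>0$ depending only on $\epsilon_0$ (hence universal in the sense of the paper). Throughout, $\mu$ is the differential of equation (\ref{hbd-a}) and $f$ is its pointwise majorant from (\ref{4-1-4}), so that $|\mu|\leq f$ by (\ref{4-1-3}).

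For the lower bound I would localize near the marked points. Since $\{p_i\}_{1\leq i\leq n}$ is $(\epsilon_0,n)$-separated and $\inj(p_i)\geq \frac{\epsilon_0}{2}$, the geodesic balls $B(p_i;\frac{\epsilon_0}{2})\subset X_g$ are embedded and pairwise disjoint. Applying Teo's estimate (Proposition \ref{lnubw}) with $r=\frac{\epsilon_0}{2}\leq \inj(p_i)$ gives
\[
\int_{B(p_i;\frac{\epsilon_0}{2})}|\mu|^2\,dA\;\geq\;\frac{|\mu(p_i)|^2}{C_1(\tfrac{\epsilon_0}{2})}.
\]
By Part (2) of Theorem \ref{ubfmu} we have $|\mu(p_i)|=|\mu(\tilde p_i)|\geq C_4>0$ for every $i$, so summing over the disjoint balls yields $\|\mu\|_{WP}^2\geq \frac{C_4^2}{C_1(\epsilon_0/2)}\,n$, the desired lower bound.

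For the upper bound, the naive triangle inequality $\|\mu\|_{WP}\leq\sum_i\|\mu_i\|_{WP}$ only produces $\|\mu\|_{WP}^2\leq Cn^2$, which is too weak; \emph{overcoming this quadratic loss is the main obstacle.} The key idea is to replace the triangle inequality by the $L^1$-norm of the majorant $f$, whose total mass turns out to be exactly linear in $n$. Using $|\mu|\leq f$ together with the pointwise bound $\sup_{X_g}|\mu|\leq C_3$ from Part (1) of Theorem \ref{ubfmu}, I would write
\[
\|\mu\|_{WP}^2=\int_{X_g}|\mu|^2\,dA\;\leq\;\Big(\sup_{X_g}|\mu|\Big)\int_{X_g}|\mu|\,dA\;\leq\;C_3\int_{X_g}f\,dA.
\]
It then remains to compute $\int_{X_g}f\,dA$. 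Writing $f=\frac14\sum_{i=1}^n\sum_{\gamma\in\Gamma_g}(1-|(\sigma_{\tilde p_i}^{-1}\circ\gamma)(z)|^2)^2$ and unfolding the $\Gamma_g$-sum against a fundamental domain $F$ (using that $\{\gamma F\}_{\gamma\in\Gamma_g}$ tiles $\DD$ and that the hyperbolic area element $dA$ is $\Gamma_g$-invariant) gives, for each fixed $i$,
\[
\int_{X_g}\sum_{\gamma\in\Gamma_g}(1-|(\sigma_{\tilde p_i}^{-1}\circ\gamma)(z)|^2)^2\,dA(z)=\int_{\DD}(1-|\sigma_{\tilde p_i}^{-1}(w)|^2)^2\,dA(w).
\]
Substituting $v=\sigma_{\tilde p_i}^{-1}(w)$ and invoking the invariance of $dA$ once more, the right-hand side equals $\int_{\DD}(1-|v|^2)^2\,\frac{4}{(1-|v|^2)^2}\,dx\,dy=4\,\Area(\DD)=4\pi$, independently of $i$ and of the surface. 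Hence $\int_{X_g}f\,dA=\frac14\sum_{i=1}^n 4\pi=\pi n$, and therefore $\|\mu\|_{WP}^2\leq \pi C_3\, n$.

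Combining the two estimates gives $\|\mu\|_{WP}^2\asymp n$ with universal constants. The one step demanding care is the unfolding identity: one must verify that the summand is genuinely the $\Gamma_g$-orbit of the single disk function $w\mapsto (1-|\sigma_{\tilde p_i}^{-1}(w)|^2)^2$ pulled back along $\gamma$, so that integration over $X_g$ collapses to integration over all of $\DD$; the absolute convergence needed to interchange sum and integral is automatic since the integrand is non-negative and the resulting disk integral is the finite value $4\pi$. Everything else is elementary.
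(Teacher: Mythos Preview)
Your proof is correct and follows essentially the same route as the paper. For the lower bound both you and the paper localize on disjoint embedded balls around the $p_i$, invoke Proposition~\ref{lnubw}, and feed in the pointwise lower bound $|\mu(\tilde p_i)|\geq C_4$ from Theorem~\ref{ubfmu}; the only cosmetic difference is your choice of radius $\epsilon_0/2$ versus the paper's $\ln(3+2\sqrt{2})$. For the upper bound both arguments use $\|\mu\|_{WP}^2\leq (\sup|\mu|)\int|\mu|\,dA\leq C_3\int f\,dA$ and then unfold the $\Gamma_g$-sum over a fundamental domain to get exactly $\pi C_3 n$: the paper writes the majorant in the equivalent Jacobian form $\sum|(\sigma_{\tilde p_i}^{-1}\circ\gamma)'|^2$ (via (\ref{eqn:gamma-sum})) and reads the result as $\sum\Area(\sigma_{\tilde p_i}^{-1}\circ\gamma\circ F)=\Area(\DD)$, whereas you keep the form $(1-|\cdot|^2)^2$ and unfold against the hyperbolic area element---these are the same computation.
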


\begin{proof}
We lift $p_1$ to the origin $\tilde{p}_1=0 \in \DD$. Let $\Gamma_g$ be its associated Fuchsian group, $F$ be a Dirichlet fundamental domain centered at $0$ w.r.t $\Gamma_g$ and $\{\tilde{p}_i\}_{2\leq i \leq n}\subset F$ be the lifts of $\{p_i\}_{2\leq i \leq n}$ respectively. Since $\{p_i\}_{1\leq i \leq n}\subset X_g=\DD/\Gamma_g$ be an $(\epsilon_0,n)$-separated and $\epsilon_0\geq 2\ln(3+2\sqrt{2})$, the triangle inequality tells that
\begin{eqnarray}\label{5-1-0}
 B(\tilde{p}_i; \ln(3+2\sqrt{2}))\cap  B(\tilde{p}_j; \ln(3+2\sqrt{2}))=\emptyset, \quad \forall i\neq j \in [1,n].
\end{eqnarray}

Since $\min_{1\leq i \leq n}\inj(p_i)\geq \frac{\epsilon_0}{2}$, we have
\begin{eqnarray}\label{5-1-0-1}
 B(\tilde{p}_i; \ln(3+2\sqrt{2}))\subset F, \quad \forall i \in [1,n].
\end{eqnarray}

First we prove the upper bound. 

Equations (\ref{5-1-0}) and (\ref{5-1-0-1}) tell that
\begin{eqnarray}\label{5-1-1}
||\mu||_{WP}^2&=&\iint_{F} |\mu(z)|^2 \rho(z) |dz|^2\\
&\geq& \sum_{i=1}^n \iint_{ B(\tilde{p}_i; \ln(3+2\sqrt{2}))}  |\mu(z)|^2 \rho(z) |dz|^2. \nonumber
\end{eqnarray}

Since $\inj(p_i) \geq \frac{\epsilon_0}{2}>\ln(3+2\sqrt{2})$, from Proposition \ref{lnubw} and Part (2) of Theorem \ref{ubfmu} we know that
\begin{eqnarray}\label{5-1-2}
||\mu||_{WP}^2 &\geq& \sum_{i=1}^n \frac{1}{C_1(\ln(3+2\sqrt{2}))}|\mu(\tilde{p}_i)|^2 \\
&\geq & \sum_{i=1}^n \frac{1}{C_1(\ln(3+2\sqrt{2}))} C_4^2 \nonumber \\
&=& n\cdot \frac{C_4^2}{C_1(\ln(3+2\sqrt{2}))}.\nonumber
\end{eqnarray}
\

Now we prove the lower bound. 

From Part (1) of Theorem \ref{ubfmu} we know that
\begin{eqnarray}\label{5-1-3}
||\mu||_{WP}^2&=&\iint_{F} |\mu(z)|^2 \rho(z) |dz|^2\\
&\leq & |\mu|_{\ell^{\infty}(\DD)} \iint_{F} |\mu(z)| \rho(z) |dz|^2 \nonumber \\
&\leq& C_3 \iint_{F} |\mu(z)| \rho(z) |dz|^2. \nonumber
\end{eqnarray}

Inequality (\ref{eqn:ub-mu}) tells that
\begin{eqnarray}\label{5-1-4}
||\mu||_{WP}^2 &\leq& C_3 \iint_{F}\sum_{i=1}^n \sum_{\gamma \in \Gamma_g}|(\sigma_{\tilde{p}_i}^{-1}\circ \gamma)' (z)|^2|dz|^2 \\
&=& C_3 \sum_{i=1}^n  \sum_{\gamma \in \Gamma_g} \Area (\sigma_{\tilde{p}_i}^{-1}\circ \gamma \circ F) \nonumber \\
&=& C_3 \sum_{i=1}^n \Area (\DD) \nonumber \\
&=& n \cdot (C_3 \pi ). \nonumber
\end{eqnarray}

Then, the conclusion follows from inequalities (\ref{5-1-2}) and (\ref{5-1-4}).
\end{proof}

Now we are ready to prove Theorem \ref{mt-1}.
\begin{proof}[Proof of Theorem \ref{mt-1}]
Let $\{p_i\}_{1\leq i\leq n}$ be an $\epsilon_0$-net in $X_g$ where $n$ is a positive integer to be determined. 

First since $\dist(p_i, p_j) \geq \epsilon_0$ for all $i\neq j \in [1,n]$, we have
\begin{eqnarray*}
B(p_i; \frac{\epsilon_0}{2}) \cap B(p_j; \frac{\epsilon_0}{2})  = \emptyset, \quad \forall i\neq j \in [1,n].
\end{eqnarray*}

Thus, 
\begin{eqnarray*}
\sum_{i=1}^n \Vol(B(p_i; \frac{\epsilon_0}{2}))&=&   \Vol(\cup_{i=1}^n B(p_i; \frac{\epsilon_0}{2}))\\
&\leq & \Vol(X_g)\\
&=& 4\pi(g-1).
\end{eqnarray*}

Since $\inj(X_g)\geq \epsilon_0$, 
$$\Vol(B(p_i; \frac{\epsilon_0}{2}))=\Vol_{\DD}(B(0; \frac{\epsilon_0}{2})).$$ 

Thus, we have
\begin{eqnarray}\label{5-2-1}
n\leq \frac{4\pi(g-1)}{\Vol_{\DD}(B(0; \frac{\epsilon_0}{2})) }.
\end{eqnarray}

On the other hand, since $\{p_i\}_{1\leq i\leq n}\subset X_g$ is an $\epsilon_0$-net, 
$$\cup_{i=1}^nB(p_i; \epsilon_0)=X_g.$$

Since $\inj(X_g)\geq \epsilon_0$, after taking a volume we get
\begin{eqnarray*}
4\pi(g-1)&=& \Vol(X_g) \\
&\leq & \sum_{i=1}^n \Vol(B(p_i; \epsilon_0)) \\
&=& \Vol_{\DD}(B(0; \epsilon_0)) \cdot n.
\end{eqnarray*}

Thus,
\begin{eqnarray}\label{5-2-2}
n\geq \frac{4\pi(g-1)}{\Vol_{\DD}(B(0; \epsilon_0)) }.
\end{eqnarray}

Inequalities (\ref{5-2-1}) and (\ref{5-2-2}) tell that
\begin{eqnarray}\label{5-2-2-2}
n\asymp g.
\end{eqnarray}

We choose $\mu \in \HBD(X_g)$ defined in equation (\ref{hbd-a}). Recall that Proposition \ref{ratio} says that the Weil-Petersson holomorphic sectional curvature along $\mu$ satisfies that
\begin{eqnarray}\label{5-2-3}
\HK(\mu)\geq -2\frac{\sup_{z\in X_g}|\mu(z)|^2}{||\mu||_{WP}^2}.
\end{eqnarray}

Proposition \ref{l2-norm} and equation (\ref{5-2-2-2}) tell that
\begin{eqnarray}\label{5-2-4}
||\mu||_{WP}^2\asymp g.
\end{eqnarray}

Then, it follows from Theorem \ref{ubfmu}, inequality (\ref{5-2-3}) and equation (\ref{5-2-4}) that the Weil-Petersson holomorphic sectional curvature along $\mu$ satisfies that
\begin{eqnarray}\label{5-2-5}
\HK(\mu)\geq -\frac{C_5}{g}
\end{eqnarray}
where $C_5>0$ is a universal positive constant.

In particular, we have
\begin{eqnarray}\label{5-2-5-1}
\max_{\nu \in \HBD(X_g)}\HK(\nu)\geq -\frac{C_5}{g}.
\end{eqnarray}

On the other hand, from Wolpert-Tromba's upper bound for Weil-Petersson holomorphic sectional curvature in \cite{Wolpert86, Tromba86} we know that
\begin{eqnarray}\label{5-2-6}
\max_{\nu \in \HBD(X_g)}\HK(\nu) \leq -\frac{1}{2\pi(g-1)}.
\end{eqnarray}

Then, the conclusion follows from inequalities (\ref{5-2-5-1}) and (\ref{5-2-6}).
\end{proof}

The following result is a refinement of Theorem 1.8 in \cite{WW15}.
\begin{theorem}\label{upper-bound-WW}
Given a positive constant $\epsilon_1>\ln(3+2\sqrt{2})$. Let $X_g \in \mathbb{M}_g$ be a hyperbolic surface satisfying that there exists a point $p \in X_g$ such that $\inj(p) \geq \epsilon_1$. Then, there exists a universal constant $C_6=C_6(\epsilon_1)>0$, only depending on $\epsilon_1$, such that the minimal Weil-Petersson holomorphic sectional curvature at $X_g$ satisfies that 
$$\min_{\nu \in \HBD(X_g)}\HK(\nu)\leq -C_6 <0.$$
\end{theorem}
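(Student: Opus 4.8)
The plan is to specialize the construction of Section \ref{const} to the single point $n=1$ and feed the resulting estimates into the upper bound of Proposition \ref{ratio}. Concretely, I would lift $p$ to the origin $\tilde{p}_1=0\in\DD$, let $\Gamma_g$ be the associated Fuchsian group, and take the harmonic Beltrami differential of equation (\ref{hbd-a}) with $n=1$, namely $\mu(z)=\sum_{\gamma\in\Gamma_g}\gamma'(z)^2/\rho(z)$ (here $\sigma_{\tilde{p}_1}$ is the identity map). Since $\mu\in\HBD(X_g)$ we have $\min_{\nu\in\HBD(X_g)}\HK(\nu)\leq\HK(\mu)$, so it suffices to prove $\HK(\mu)\leq -C_6$ for some $C_6=C_6(\epsilon_1)>0$.

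First I would verify that the hypotheses of Theorem \ref{ubfmu} and Proposition \ref{l2-norm} are met in this degenerate case. Setting $\epsilon_0:=2\epsilon_1$, the assumption $\epsilon_1>\ln(3+2\sqrt{2})$ becomes exactly $\epsilon_0>2\ln(3+2\sqrt{2})$, while $\inj(p)\geq\epsilon_1=\epsilon_0/2$ supplies the required injectivity bound; the $(\epsilon_0,n)$-separation condition is vacuous when $n=1$. Part (2) of Theorem \ref{ubfmu} then gives the lower bound $|\mu(p)|=|\mu(0)|\geq C_4$ with $C_4=\frac{1}{2}\left(\left(\frac{e^{\epsilon_1}-1}{e^{\epsilon_1}+1}\right)^2-\frac{1}{2}\right)$, which is strictly positive precisely because $\epsilon_1>\ln(3+2\sqrt{2})$ forces $\frac{e^{\epsilon_1}-1}{e^{\epsilon_1}+1}>\frac{1}{\sqrt{2}}$. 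Proposition \ref{l2-norm} with $n=1$ then supplies the genus-independent upper bound $\|\mu\|_{WP}^2\leq C_3\pi$, the essential point being the tiling identity $\sum_{\gamma\in\Gamma_g}\Area(\gamma\circ F)=\Area(\DD)=\pi$, which is where independence of the genus enters.

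Finally I would apply the right-hand inequality of Proposition \ref{ratio} at the point $p$: since $\inj(p)\geq\epsilon_1$, the constant there may be taken to depend only on $\epsilon_1$ (one uses the embedded geodesic ball of radius $\epsilon_1$ sitting inside $B(p;\inj(p))$ in the underlying estimate of Proposition \ref{lnubw}), so that
\[\HK(\mu)\leq -\frac{C_2(\epsilon_1)\,|\mu(p)|^4}{\|\mu\|_{WP}^4}\leq -\frac{C_2(\epsilon_1)\,C_4^4}{(C_3\pi)^2}=:-C_6<0,\]
which proves the claim. The only genuine obstacle is the uniformity of the constants: one must confirm both that the lower bound $C_4$ for $|\mu(p)|$ remains positive all the way down to the threshold $\epsilon_1=\ln(3+2\sqrt{2})$ (this is exactly the content of the refinement over Theorem 1.8 of \cite{WW15}, where a \emph{large} injectivity radius was assumed), and that a positive lower bound for the factor $C_2(\inj(p))$ can be extracted using only the hypothesis $\inj(p)\geq\epsilon_1$ rather than the a priori unknown value of $\inj(p)$.
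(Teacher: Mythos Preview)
Your proposal is correct and follows essentially the same route as the paper: specialize the construction of Section~\ref{const} to $n=1$, invoke Theorem~\ref{ubfmu} and Proposition~\ref{l2-norm} to get $|\mu(p)|\asymp\|\mu\|_{WP}\asymp 1$, and plug into the right-hand inequality of Proposition~\ref{ratio}. Your explicit identification $\epsilon_0=2\epsilon_1$ and your care about replacing $C_2(\inj(p))$ by $C_2(\epsilon_1)$ via the monotonicity implicit in Proposition~\ref{lnubw} are exactly the small bookkeeping points the paper leaves to the reader.
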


\begin{proof}
We lift $p \in X_g$ to the origin $0 \in \DD$. Let $\Gamma_g$ be its associated Fuchsian group and $\mu \in \HBD(X_g)$ given by
$$\mu(z)=\sum_{\gamma \in \Gamma_g} \frac{\gamma'(z)^2}{\rho(z)}$$
which agrees with equation (\ref{hbd-a}) for the case $n=1$.

Recall that Proposition \ref{ratio} says that there exists a constant $C_2>0$ such that 
\begin{eqnarray}\label{5-1-5}
\HK(\mu)\leq -C_2(\inj(p)) \frac{|\mu(p)|^4}{||\mu||_{WP}^4}.
\end{eqnarray}

Since $\inj(p) \geq \epsilon_1>\ln(3+2\sqrt{2})$, by applying Part (1) of Theorem \ref{ubfmu} and Proposition \ref{l2-norm} to $\mu$ for the case $n=1$, we have
\begin{eqnarray}\label{5-1-5-1}
|\mu(p)|\asymp ||\mu||_{WP} \asymp 1.
\end{eqnarray} 

Then, the conclusion immediately follows from inequality (\ref{5-1-5}) and equation (\ref{5-1-5-1}).
\end{proof}

Now we are ready to prove Theorem \ref{mt-2}.
\begin{proof}[Proof of Theorem \ref{mt-2}]
Since $\inj(X_g) \geq \epsilon_0>2\ln(3+2\sqrt{2})$, by Theorem 1.1 in \cite{Huang07} (or Theorem 1.2 in \cite{WW15}) and Theorem \ref{upper-bound-WW} we know that
\begin{eqnarray}\label{5-3-1}
\min_{\nu \in \HBD(X_g)}\HK(\nu) \asymp -1.
\end{eqnarray}

Then, the conclusion follows from Theorem \ref{mt-1} and equation (\ref{5-3-1}).
\end{proof}

\section{Proof of Theorem \ref{mt-3} and \ref{mt-4}}\label{mt-3-4}
Before proving Theorem \ref{mt-3} and \ref{mt-4}, let us recall the following two results of M. Mirzakhani in \cite{Mirz13} which are crucial in this section.

Given a constant $\epsilon>0$, let 
$$\mathbb{M}_g^{\epsilon}=\{X_g \in \mathbb{M}_g; \inj(X_g) \leq 2\epsilon\}.$$
\begin{theorem}[\cite{Mirz13}, Theorem 4.2]\label{mm-2}
There exists a universal constant $D_0>0$ such that for all $\epsilon<D_0$, 
$$\Vol_{WP}(\mathbb{M}_g^{\epsilon})\asymp \epsilon^2 \Vol_{WP}(\mathbb{M}_g)$$
as $g\to \infty$.
\end{theorem}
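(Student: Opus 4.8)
The plan is to estimate $\Vol_{WP}(\mathbb{M}_g^\epsilon)$ by unfolding the thin part over its short geodesics and applying Mirzakhani's integration formula, thereby reducing everything to integrals of lower-complexity Weil--Petersson volume polynomials. First I would record that $\inj(X_g)\leq 2\epsilon$ is equivalent to the systole of $X_g$ being at most $4\epsilon$, so every surface in $\mathbb{M}_g^\epsilon$ carries a (necessarily simple) closed geodesic of length $\leq 4\epsilon$. For $\epsilon$ below a universal constant $D_0$ (a Margulis/collar constant), any two such short geodesics are disjoint and each sits in a definite embedded collar; this lets me organize $\mathbb{M}_g^\epsilon$ according to the topological type of a short simple closed curve and control the overlap coming from surfaces with several short curves.

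For the leading term I would isolate the contribution of a non-separating simple closed curve $\gamma$. Writing $V_{g,n}(\ell_1,\dots,\ell_n)$ for the Weil--Petersson volume of the moduli space of genus $g$ surfaces with $n$ labelled geodesic boundary components of the indicated lengths, Mirzakhani's integration formula gives
\[
\int_{\mathbb{M}_g}\sum_{\alpha\in\Mod\cdot\gamma}\mathbf{1}_{[0,4\epsilon]}(\ell_\alpha)\,dX
=\tfrac12\int_0^{4\epsilon}\ell\,V_{g-1,2}(\ell,\ell)\,d\ell,
\]
where the weight $\ell$ is the length of the twist orbit $\tau\in[0,\ell)$ in Fenchel--Nielsen coordinates and $\tfrac12$ is the symmetry factor for a non-separating curve. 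Using Mirzakhani's $\sinh$-type bounds $V_{g,n}(L)\asymp V_{g,n}$ for boundary lengths $L$ in a fixed bounded range, the integrand satisfies $V_{g-1,2}(\ell,\ell)\asymp V_{g-1,2}$ uniformly in $g$ for $0\leq\ell\leq 4\epsilon<4D_0$, so this contribution is
\[
\tfrac12\int_0^{4\epsilon}\ell\,V_{g-1,2}(\ell,\ell)\,d\ell\ \asymp\ \epsilon^2\,V_{g-1,2}.
\]

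To turn this into a two-sided bound for $\Vol_{WP}(\mathbb{M}_g^\epsilon)$ I would next sum over all topological types. The upper bound is the non-separating term above plus the separating terms $\tfrac12\int_0^{4\epsilon}\ell\,V_{g_1,1}(\ell)V_{g_2,1}(\ell)\,d\ell\asymp\epsilon^2 V_{g_1,1}V_{g_2,1}$ over $g_1+g_2=g$, together with an inclusion--exclusion correction for surfaces carrying two or more short curves; the lower bound comes from the single non-separating term minus those same corrections. The point is that $\sum_{g_1+g_2=g}V_{g_1,1}V_{g_2,1}=o(V_g)$ and that each multi-curve term carries an extra factor $O(\epsilon^2)$, so the non-separating term dominates and both bounds are $\asymp\epsilon^2 V_{g-1,2}$. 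Finally, Mirzakhani--Zograf's large-genus asymptotics give $V_{g-1,2}\asymp V_g=\Vol_{WP}(\mathbb{M}_g)$, which yields $\Vol_{WP}(\mathbb{M}_g^\epsilon)\asymp\epsilon^2\Vol_{WP}(\mathbb{M}_g)$.

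The main obstacle is the uniform-in-$g$ control of the volume polynomials. I expect the genuinely hard inputs to be (i) the comparison $V_{g-1,2}\asymp V_g$ and the estimate $\sum_{g_1+g_2=g}V_{g_1,1}V_{g_2,1}=o(V_g)$, both of which rely on the fine asymptotics of $V_{g,n}$ coming from Mirzakhani's recursion rather than on any soft argument, and (ii) checking that the $\sinh$-bounds and the multi-curve corrections are governed by constants independent of $g$, so that the implied constants in $\asymp$ are genuinely universal. Once these volume estimates are secured, the integration-formula bookkeeping is routine.
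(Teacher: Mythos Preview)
The paper does not give its own proof of this statement: Theorem~\ref{mm-2} is quoted verbatim from Mirzakhani \cite{Mirz13} (her Theorem~4.2) and used as a black box in the proof of Theorem~\ref{mt-4}. Your sketch is a faithful outline of Mirzakhani's original argument---unfolding via the integration formula, isolating the dominant non-separating contribution, and invoking the large-genus volume comparisons $V_{g-1,2}\asymp V_g$ and $\sum_{g_1+g_2=g}V_{g_1,1}V_{g_2,1}=o(V_g)$---so there is nothing to compare against within this paper itself.
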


Let $X$ be a hyperbolic surface. Set
$$Emb(X)=\max_{p\in X}\inj(p).$$
\begin{theorem}[\cite{Mirz13}, Theorem 4.5]\label{mm-1}
$$\lim_{g\to \infty}\Prob\{X_g \in \mathbb{M}_g; Emb(X_g) \geq \frac{\ln{g}}{6}\}=1.$$
\end{theorem}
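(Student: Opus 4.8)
The plan is to estimate the complementary event and show that $\Prob\{X_g\in\mathbb{M}_g:\ Emb(X_g)<R\}\to 0$, where $R:=\frac{\ln g}{6}$; the tool is a first-moment (Markov) bound on the total area that short closed geodesics can block off. Throughout, ``short'' means length below $2R$.

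First I would record a geometric reduction. If $\inj(p)<R$, then lifting $p$ to $\tilde p\in\DD$ there is a deck transformation $\gamma\in\Gamma_g$ with $\dist_{\DD}(\tilde p,\gamma\tilde p)=2\inj(p)<2R$; replacing $\gamma$ by its primitive root only decreases its translation length, so we may take $\gamma$ primitive, with axis projecting to a primitive closed geodesic $\gamma_*$ of length $\tau<2R$. The standard identity $\sinh(\inj(p))=\cosh(\dist(p,\gamma_*))\sinh(\tau/2)$ then forces $p$ into the tube
\[
T(\gamma_*):=\Big\{x\in X_g:\ \cosh(\dist(x,\gamma_*))<\tfrac{\sinh R}{\sinh(\tau/2)}\Big\}.
\]
Hence $Emb(X_g)<R$ implies that the tubes $T(\gamma_*)$, ranging over all primitive closed geodesics of length $<2R$, cover $X_g$, so in particular $\sum_{\gamma_*}\Area(T(\gamma_*))\ge\Area(X_g)=4\pi(g-1)$.

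Next I would bound each tube's area. In Fermi coordinates a tube of half-width $w$ about a geodesic of length $\tau$ has area $2\tau\sinh w$, and the covering projection $\DD\to X_g$ only decreases area, so with $\cosh w=\sinh R/\sinh(\tau/2)$,
\[
\Area(T(\gamma_*))\le 2\tau\,\frac{\sqrt{\sinh^2 R-\sinh^2(\tau/2)}}{\sinh(\tau/2)}\le\frac{2\tau}{\sinh(\tau/2)}\sinh R\le 4\sinh R,
\]
uniformly for $\tau<2R$ (note this needs no embeddedness of the tube). Writing $N_{<2R}(X_g)$ for the number of primitive closed geodesics of length $<2R$, this yields $\sum_{\gamma_*}\Area(T(\gamma_*))\le 4\sinh R\cdot N_{<2R}(X_g)$. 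By Markov's inequality it then suffices to prove $\mathbb{E}[N_{<2R}]=o\big(g/\sinh R\big)$, since then
\[
\Prob\{Emb(X_g)<R\}\le\frac{4\sinh R\cdot\mathbb{E}[N_{<2R}]}{4\pi(g-1)}\longrightarrow 0.
\]

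Finally I would estimate $\mathbb{E}[N_{<2R}]$ through Mirzakhani's integration formula together with the Mirzakhani–Zograf volume asymptotics. The dominant contribution comes from simple non-separating curves, for which
\[
\mathbb{E}\big[\#\{\gamma\ \text{simple non-separating}:\ \ell_\gamma<2R\}\big]=\frac{1}{2V_g}\int_0^{2R}\ell\,V_{g-1,2}(\ell,\ell)\,d\ell,
\]
while separating and non-simple classes are of strictly lower order in $g$. Using the volume bound $V_{g-1,2}(\ell,\ell)\le C\,(\sinh(\ell/2)/(\ell/2))^2\,V_{g-1,2}$ and the boundedness of $V_{g-1,2}/V_g$, the integral is $O\big(\int_0^{2R}e^{\ell}/\ell\,d\ell\big)=O(e^{2R}/R)=O(g^{1/3}/\ln g)$. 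Since $\sinh R\asymp g^{1/6}$, this gives $4\sinh R\cdot\mathbb{E}[N_{<2R}]=O(g^{1/2}/\ln g)=o(g)$, as required. The main obstacle is precisely this last step: the length scale $2R\sim\tfrac13\ln g$ grows with $g$, so volume expansions valid only for bounded boundary lengths are unavailable, and one must instead invoke Mirzakhani's $\sinh$-type upper bounds for $V_{g,n}(\ell)$ that are uniform in both $g$ and $\ell$, along with sharp control of the ratios $V_{g-1,2}/V_g$ and of the subleading separating and non-simple counts.
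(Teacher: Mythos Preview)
The paper does not prove this statement at all: it is quoted verbatim as Theorem~4.5 of \cite{Mirz13} and used as a black box in the proof of Theorem~\ref{mt-3}. So there is no in-paper argument to compare your proposal against.

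That said, your outline is essentially the strategy Mirzakhani herself uses. The tube-covering reduction and the Markov bound via $\mathbb{E}[N_{<2R}]$ are exactly the mechanism behind \cite[Theorem~4.5]{Mirz13}. You correctly identify the genuine difficulty: at length scale $2R\sim\tfrac{1}{3}\ln g$ one needs volume-polynomial bounds uniform in both $g$ and the boundary lengths, and one must also control the contribution of \emph{non-simple} closed geodesics, which is not covered by the integration formula for simple curves. In Mirzakhani's paper this last point is handled by a separate geometric argument (bounding the number of non-simple geodesics of length $<L$ on any hyperbolic surface by a universal function of $L$, independent of $g$), not by the integration formula; your proposal gestures at this but does not supply it. If you intend to include a self-contained proof rather than a citation, that step would need to be made explicit.
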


\begin{proof}[Proof of Theorem \ref{mt-3}]
It is clear that the conclusion directly follows from Theorem \ref{upper-bound-WW} and Theorem \ref{mm-1}.
\end{proof}

\begin{proof}[Proof of Theorem \ref{mt-4}]  
Let $C_6>0$ be the universal constant in Theorem \ref{mt-3}. Define
$$\mathbb{A}_g:=\{X_g\in\mathbb{M}_g;  \min_{P \subset T_{X_g}\mathbb{M}_g}K(P) \leq -C_6\}.$$

First a result of Teo in \cite{Teo09} (see Proposition 3.3 in \cite{Teo09}) tells that for any $X_g \in (\mathbb{M}_g-\mathbb{M}_g^{\epsilon})$ and $v\in  T_{X_g}\mathbb{M}_g$, the Ricci curvature $Ric(v)$ along the $v$ direction satisfies that
\begin{eqnarray}\label{5-4-0}
Ric(v)\geq - 2C_1(2\epsilon)
\end{eqnarray}
where the constant $C_1$ is given in Proposition \ref{lnubw}.

Since $Ric$ is a $(6g-7)$ summation, inequality (\ref{5-4-0}) tells that
\begin{eqnarray}\label{5-4-0-1}
(6g-7) \cdot \max_{P \subset T_{X_g}\mathbb{M}_g}K(P)  \geq - 2C_1(2\epsilon).
\end{eqnarray}

That is,
\begin{eqnarray}\label{5-4-0-2}
\max_{P \subset T_{X_g}\mathbb{M}_g}K(P)  \geq \frac{- 2C_1(2\epsilon)}{6g-7}.
\end{eqnarray}

Thus, it follow from inequality (\ref{5-4-0-2}) and the definition of $\mathbb{A}_g$ that for any $X_g \in \mathbb{A}_g \cap (\mathbb{M}_g-\mathbb{M}_g^{\epsilon})$,
\begin{eqnarray}\label{5-4-0-3}
h(X_g) \geq \frac{C_6}{2C_1(2\epsilon)}\cdot (6g-7).
\end{eqnarray}

Let $D_0>0$ be the constant in Theorem \ref{mm-2}. Inequality (\ref{5-4-0-3}) tells that for any $L>0$ and any $0<\epsilon\leq D_0$ there exists a positive integer $g_0>>1$ such that for all $g\geq g_0$ we have
\begin{eqnarray}\label{5-4-1}
h(X_g)\geq L, \quad \forall X_g \in \mathbb{A}_g \cap (\mathbb{M}_g-\mathbb{M}_g^{\epsilon}).
\end{eqnarray}

Meanwhile, the \wep volume of $\mathbb{A}_g \cap(\mathbb{M}_g-\mathbb{M}_g^{\epsilon})$ is controlled as follows.
\begin{eqnarray}\label{5-4-2}
&&\frac{\Vol_{WP}( \mathbb{A}_g \cap (\mathbb{M}_g-\mathbb{M}_g^{\epsilon}))}{\Vol_{WP}(\mathbb{M}_g)}\\
&=&\frac{\Vol_{WP}(\mathbb{A}_g)+\Vol_{WP}(\mathbb{M}_g-\mathbb{M}_g^{\epsilon})-\Vol_{WP}(\mathbb{A}_g\cup(\mathbb{M}_g-\mathbb{M}_g^{\epsilon})  )}{\Vol_{WP}(\mathbb{M}_g)}.\nonumber \\
&\geq &  \frac{\Vol_{WP}( \mathbb{A}_g )}{\Vol_{WP}(\mathbb{M}_g)}- \frac{\Vol_{WP}( \mathbb{M}_g^{\epsilon})}{\Vol_{WP}(\mathbb{M}_g)}.  \nonumber \\
 \nonumber
\end{eqnarray}

Thus, it follows from Theorem \ref{mt-3}, inequality (\ref{5-4-2}) and Theorem \ref{mm-2} that there exists a universal constant $C_7>0$ such that
\begin{eqnarray}\label{5-4-3}
&&\liminf_{g\to \infty} \frac{\Vol_{WP}( \mathbb{A}_g \cap (\mathbb{M}_g-\mathbb{M}_g^{\epsilon}))}{\Vol_{WP}(\mathbb{M}_g)}\geq 1- C_7 \epsilon^2.
\end{eqnarray}

Combine inequalities (\ref{5-4-1}) and (\ref{5-4-3}), we get
\begin{eqnarray}\label{5-4-4}
1 &\geq&  \limsup_{g\to \infty}\Prob\{X_g \in \mathbb{M}_g; h(X_g) \geq L\} \\
&\geq& \liminf_{g\to \infty}\Prob\{X_g \in \mathbb{M}_g; h(X_g) \geq L\} \nonumber \\
&\geq& \liminf_{g\to \infty}\frac{\Vol_{WP}( \mathbb{A}_g \cap (\mathbb{M}_g-\mathbb{M}_g^{\epsilon}))}{\Vol_{WP}(\mathbb{M}_g)}  \nonumber\\
&\geq& 1- C_7 \epsilon^2. \nonumber
\end{eqnarray}

Then, the conclusion follows because $\epsilon \in (0, D_0)$ is arbitrary, . 
\end{proof}

\section{Acknowledgement}
This  paper is an outgrowth of work done in collaboration with Michael Wolf who I would like to especially thank. Without the invaluable discussions with him, it is impossible to have this work done. The author also would like to thanks Zheng Huang, Maryam Mirzakhani and Scott Wolpert for their interests and useful conversations. This work was partially completed while the author attended the Tsinghua Sanya Group Action Forum on Dec/2014. The author would like to thank the organizers for their hospitality. 

\bibliographystyle{amsalpha}
\bibliography{ref}

\providecommand{\bysame}{\leavevmode\hbox to3em{\hrulefill}\thinspace}
\providecommand{\MR}{\relax\ifhmode\unskip\space\fi MR }
\providecommand{\MRhref}[2]{%
  \href{http://www.ams.org/mathscinet-getitem?mr=#1}{#2}
}
\providecommand{\href}[2]{#2}
\begin{thebibliography}{LSYY13}

\bibitem[Ahl61]{Ahlfors61}
Lars~V. Ahlfors, \emph{Some remarks on {T}eichm\"uller's space of {R}iemann
  surfaces}, Ann. of Math. (2) \textbf{74} (1961), 171--191.

\bibitem[Ahl64]{Ahlfors64}
\bysame, \emph{Eine {B}emerkung \"uber {F}uchssche {G}ruppen}, Math. Z.
  \textbf{84} (1964), 244--245.

\bibitem[BB14]{BB2014}
J.~{Brock} and K.~{Bromberg}, \emph{{Inflexibility, Weil-Petersson distance,
  and volumes of fibered 3-manifolds}}, ArXiv e-prints (2014).

\bibitem[BF06]{BF02}
Jeffrey Brock and Benson Farb, \emph{Curvature and rank of {T}eichm\"uller
  space}, Amer. J. Math. \textbf{128} (2006), no.~1, 1--22.

\bibitem[BS94]{BS94}
P.~Buser and P.~Sarnak, \emph{On the period matrix of a {R}iemann surface of
  large genus}, Invent. Math. \textbf{117} (1994), no.~1, 27--56, With an
  appendix by J. H. Conway and N. J. A. Sloane.

\bibitem[Chu76]{Chu76}
Tienchen Chu, \emph{The {W}eil-{P}etersson metric in the moduli space}, Chinese
  J. Math. \textbf{4} (1976), no.~2, 29--51.

\bibitem[CP12]{CP12}
William Cavendish and Hugo Parlier, \emph{Growth of the {W}eil-{P}etersson
  diameter of moduli space}, Duke Math. J. \textbf{161} (2012), no.~1,
  139--171.

\bibitem[FKM13]{FKM13}
Alastair Fletcher, Jeremy Kahn, and Vladimir Markovic, \emph{The moduli space
  of {R}iemann surfaces of large genus}, Geom. Funct. Anal. \textbf{23} (2013),
  no.~3, 867--887.

\bibitem[GPY11]{GPY11}
Larry Guth, Hugo Parlier, and Robert Young, \emph{Pants decompositions of
  random surfaces}, Geometric and Functional Analysis \textbf{21} (2011),
  no.~5, 1069--1090 (English).

\bibitem[Hua05]{Huang05}
Zheng Huang, \emph{Asymptotic flatness of the {W}eil-{P}etersson metric on
  {T}eichm\"uller space}, Geom. Dedicata \textbf{110} (2005), 81--102.

\bibitem[Hua07a]{Huang07-a}
\bysame, \emph{On asymptotic {W}eil-{P}etersson geometry of {T}eichm\"uller
  space of {R}iemann surfaces}, Asian J. Math. \textbf{11} (2007), no.~3,
  459--484.

\bibitem[Hua07b]{Huang07}
\bysame, \emph{The {W}eil-{P}etersson geometry on the thick part of the moduli
  space of {R}iemann surfaces}, Proc. Amer. Math. Soc. \textbf{135} (2007).

\bibitem[IT92]{IT92}
Y.~Imayoshi and M.~Taniguchi, \emph{An introduction to {T}eichm\"uller spaces},
  Springer-Verlag, Tokyo, 1992, Translated and revised from the Japanese by the
  authors.

\bibitem[LSY04]{LSY04}
Kefeng Liu, Xiaofeng Sun, and Shing-Tung Yau, \emph{Canonical metrics on the
  moduli space of {R}iemann surfaces. {I}}, J. Differential Geom. \textbf{68}
  (2004), no.~3, 571--637.

\bibitem[LSY08]{LSY08}
\bysame, \emph{Good geometry on the curve moduli}, Publ. Res. Inst. Math. Sci.
  \textbf{44} (2008), no.~2, 699--724.

\bibitem[LSYY13]{LSYY13}
K.~{Liu}, X.~{Sun}, X.~{Yang}, and S.-T. {Yau}, \emph{{Curvatures of moduli
  space of curves and applications}}, ArXiv e-prints (2013).

\bibitem[LX09]{LX09}
Kefeng Liu and Hao Xu, \emph{Recursion formulae of higher weil–petersson
  volumes}, no.~5, 835--859.

\bibitem[Mas76]{Masur76}
Howard Masur, \emph{Extension of the {W}eil-{P}etersson metric to the boundary
  of {T}eichmuller space}, Duke Math. J. \textbf{43} (1976), no.~3, 623--635.

\bibitem[Mir07a]{Mirz07-i}
Maryam Mirzakhani, \emph{Simple geodesics and {W}eil-{P}etersson volumes of
  moduli spaces of bordered {R}iemann surfaces}, Invent. Math. \textbf{167}
  (2007), no.~1, 179--222.

\bibitem[Mir07b]{Mirz07-j}
\bysame, \emph{Weil-{P}etersson volumes and intersection theory on the moduli
  space of curves}, J. Amer. Math. Soc. \textbf{20} (2007), no.~1, 1--23
  (electronic).

\bibitem[Mir10]{Mirz10}
\bysame, \emph{On {W}eil-{P}etersson volumes and geometry of random hyperbolic
  surfaces}, Proceedings of the {I}nternational {C}ongress of {M}athematicians.
  {V}olume {II}, Hindustan Book Agency, New Delhi, 2010, pp.~1126--1145.

\bibitem[Mir13]{Mirz13}
\bysame, \emph{Growth of {W}eil-{P}etersson volumes and random hyperbolic
  surfaces of large genus}, J. Differential Geom. \textbf{94} (2013), no.~2,
  267--300.

\bibitem[Pen92]{Penner92}
R.~C. Penner, \emph{Weil-petersson volumes}, J. Differential Geom. \textbf{35}
  (1992), no.~3, 559--608.

\bibitem[Roy75]{Royden75}
H.~L. Royden, \emph{Intrinsic metrics on {T}eichm\"uller space}, Proceedings of
  the {I}nternational {C}ongress of {M}athematicians ({V}ancouver, {B}. {C}.,
  1974), {V}ol. 2, Canad. Math. Congress, Montreal, Que., 1975, pp.~217--221.

\bibitem[RT13]{RT13}
Kasra Rafi and Jing Tao, \emph{The diameter of the thick part of moduli space
  and simultaneous whitehead moves}, Duke Math. J. \textbf{162} (2013), no.~10,
  1833--1876.

\bibitem[Sch86]{Schu86}
Georg Schumacher, \emph{Harmonic maps of the moduli space of compact {R}iemann
  surfaces}, Math. Ann. \textbf{275} (1986), no.~3, 455--466.

\bibitem[ST01]{ST01}
Georg Schumacher and Stefano Trapani, \emph{Estimates of weil-petersson volumes
  via effective divisors}, Comm. Math. Phys \textbf{222} (2001), 1--7.

\bibitem[Teo09]{Teo09}
Lee-Peng Teo, \emph{The {W}eil-{P}etersson geometry of the moduli space of
  {R}iemann surfaces}, Proc. Amer. Math. Soc. \textbf{137} (2009), no.~2,
  541--552.

\bibitem[Tro86]{Tromba86}
A.~J. Tromba, \emph{On a natural algebraic affine connection on the space of
  almost complex structures and the curvature of {T}eichm\"uller space with
  respect to its {W}eil-{P}etersson metric}, Manuscripta Math. \textbf{56}
  (1986), no.~4, 475--497.

\bibitem[Wol75]{Wolpert75}
Scott Wolpert, \emph{Noncompleteness of the {W}eil-{P}etersson metric for
  {T}eichm\"uller space}, Pacific J. Math. \textbf{61} (1975), no.~2, 573--577.

\bibitem[Wol86]{Wolpert86}
Scott~A. Wolpert, \emph{Chern forms and the {R}iemann tensor for the moduli
  space of curves}, Invent. Math. \textbf{85} (1986), no.~1, 119--145.

\bibitem[Wol87]{Wolpert87}
\bysame, \emph{Geodesic length functions and the {N}ielsen problem}, J.
  Differential Geom. \textbf{25} (1987), no.~2, 275--296.

\bibitem[Wol08]{Wolpert08}
\bysame, \emph{Behavior of geodesic-length functions on {T}eichm\"uller space},
  J. Differential Geom. \textbf{79} (2008), no.~2, 277--334.

\bibitem[Wol10]{Wolpertbook}
\bysame, \emph{Families of {R}iemann surfaces and {W}eil-{P}etersson geometry},
  CBMS Regional Conference Series in Mathematics, vol. 113, Published for the
  Conference Board of the Mathematical Sciences, Washington, DC; by the
  American Mathematical Society, Providence, RI, 2010.

\bibitem[Wol11]{Wolpert11}
\bysame, \emph{Understanding {W}eil-{P}etersson curvature}, Geometry and
  analysis. {N}o. 1, Adv. Lect. Math. (ALM), vol.~17, Int. Press, Somerville,
  MA, 2011, pp.~495--515.

\bibitem[Wol12a]{Wolf12}
Michael Wolf, \emph{The {W}eil-{P}etersson {H}essian of length on
  {T}eichm\"uller space}, J. Differential Geom. \textbf{91} (2012), no.~1,
  129--169.

\bibitem[Wol12b]{Wolpert12}
Scott~A. Wolpert, \emph{Geodesic-length functions and the {W}eil-{P}etersson
  curvature tensor}, J. Differential Geom. \textbf{91} (2012), no.~2, 321--359.

\bibitem[Wu14]{Wu14}
Yunhui Wu, \emph{The {R}iemannian sectional curvature operator of the
  {W}eil-{P}etersson metric and its application}, J. Differential Geom.
  \textbf{96} (2014), no.~3, 507--530.

\bibitem[WW15]{WW15}
M.~{Wolf} and Y.~{Wu}, \emph{{Uniform Bounds for Weil-Petersson Curvatures}},
  ArXiv e-prints (2015).

\bibitem[YZ91]{Yau-Zheng-91}
S.-T. Yau and F.~Zheng, \emph{Negatively {$\frac14$}-pinched {R}iemannian
  metric on a compact {K}\"ahler manifold}, Invent. Math. \textbf{103} (1991),
  no.~3, 527--535.

\bibitem[{Zog}08]{Zograf08}
P.~{Zograf}, \emph{{On the large genus asymptotics of Weil-Petersson volumes}},
  ArXiv e-prints (2008).

\end{thebibliography}

\end{document}